\newcommand{\R} {\mathbb{R}}
\newcommand{\N} {\mathbb{N}}
\newcommand{\eps}{\epsilon}
\def\accentsfrancais{applemac}
\begin{document}

\begin{frontmatter}

\title{Straight rod with different order of thickness}

\author{G. Griso$^{a}$, M. Villanueva-Pesqueira$^{b}\footnotemark[1]$
{\footnotesize
\begin{center}
$^{a}$ Laboratoire J.-L. Lions--CNRS, Bote courrier 187, Universit  Pierre et
Marie Curie,\\ 4~place Jussieu, 75005 Paris, France, \; Email: griso@ljll.math.upmc.fr\\
$^{b}$ Departamento de Matem\'atica Aplicada, Facultad de Matem\'aticas, Universidad Complutense de Madrid, \\
28040 Madrid, Spain, Email: manuelvillanueva@mat.ucm.es
\end{center} 
}
}

\begin{abstract}
%
 \noindent In this paper, we consider rods whose thickness vary linearly between $\eps$ and $\eps^2$.  Our aim is to study the asymptotic behavior of these rods in the framework of the linear elasticity. We use a decomposition method of the displacement fields of the form $u=U_e + \bar{u}$, where $U_e$ stands for  the translation-rotations of the cross-sections and $\bar{u}$ is  related to their deformations.  We establish a priori estimates. Passing to the limit  in a fixed domain gives the problems satisfied by the bending, the stretching and the torsion limit fields which are ordinary differential equations depending on  weights.
\end{abstract}

\begin{keyword}
Linear elasticity\sep Rods
\MSC 74B05 \sep 74K10

\end{keyword}

\end{frontmatter}


\numberwithin{equation}{section}
\newtheorem{theorem}{Theorem}[section]
\newtheorem{lemma}[theorem]{Lemma}
\newtheorem{corollary}[theorem]{Corollary}
\newtheorem{proposition}[theorem]{Proposition}
\newtheorem{definition}[theorem]{Definition}
\newtheorem{remark}[theorem]{Remark}
\allowdisplaybreaks
\def \Ge{{\bf e}}
\def\ds{\displaystyle}

\footnotetext[1]{The second author was partially supported by grant MTM2012-31298, MINECO, Spain and Grupo de Investigaci\'on CADEDIF, UCM and by a FPU fellowship (AP2010-0786) from the Goverment of Spain.}

\section{Introduction}
In this paper we are interested in analyzing the asymptotic behavior of a thin rod with different order of thickness in the framework of the linear elasticity. We consider a straight rod of fixed length where the cross-sections are  bounded Lipschitz domains with small diameter of order varying between $\eps$ and $\eps^2$. To be more precise, the order of the thickness of the rod is given by $\eps\rho_\eps(\cdot)$ where $\rho_\eps(\cdot)$ is a linear function depending on the cross-section of the rod such that it is $1$ at the bottom and $\eps$ at the top of the rod. We investigate how the variable thickness of the rod affects to the a priori estimates and the limit problems.

Since the diameter of the rod tends to zero, this work belongs to the field of elliptic problems posed on thin domains. Many fields of science involve the study of thin domains, for example in solid mechanics (thin rods, plates, shells), fluid dynamics (lubrication, meteorology problems, ocean dynamics), physiology (blood circulation), etc. There are many papers
dedicated to the study of the thin structures from the point of view of the elasticity, see e.g. \cite{TV96, SHSP99} for models of rods and \cite{CI97, CI00} for plates and shells.

Our work is based on the decomposition of a displacement of the rod according to \cite{Gr08}. Every displacement of the rod is the sum of an elementary displacement, it characterizes the translation and the rotation of the cross-sections, and a warping which is the residual displacement related to the deformation of the cross-section. This decomposition of the rod was introduced in  \cite{Gr05} and \cite{Gr05A} and it allows to obtain the Korn inequality  as well as the asymptotic behavior of the strain tensor of a sequence of displacements in a simple and effective way.

The notion of the elementary displacement together with the unfolding method (see \cite{CDG02, CDG08}) has led to a new method in elasticity which has been successfully applied to many problems, see e.g \cite{BGG07, BGG07A, BG08} and \cite{Gr03,Gr04, Gr05, Gr05A, Gr05B, Gr06, Gr08}. References and other applications of the unfolding operator 
technique can be found in \cite{D06,DP09,CDGZ12, AV14}.

Our paper is organized as follows. In Section 2 we describe the geometry of the rod, introduce the decomposition of a displacement field of the rod and we give some estimates of the decomposition fields in terms of the strain energy (Theorem \ref{a priori estimates}).  The proof of the Theorem \ref{a priori estimates} is based on the approximation of the displacement of the rod by a rigid body displacement. Of course, the estimates may depend on the function $\rho_\eps(\cdot).$

Section 3 is dedicated to get a priori estimates for the different fields assuming that the rod is clamped at the bottom. These estimates have an essential importance in our study to pass to the limit. Moreover, we introduce the rescaling operator $\Pi_\eps$ which allows to work in a fixed domain. One particular feature of this transformation is that the ratio of the dilation of the fixed rod depends on the third variable, it is given by the function $\eps \rho_\eps(\cdot)$. Then a special care is dedicated to the estimate of the derivatives with respect to the third variable.

In Section 4 we give the limit of the displacements and we show a few relations between some of them. Since some of the a priori estimates established depend on the variable thickness $\rho_\eps(\cdot)$ we introduce some weighted Sobolev spaces which allow to obtain the limit fields in a natural way. In Section 5 we pose the problem of elasticity and we 
specify the assumptions on the applied forces. We show that the choice of the applied forces is reasonable to get the suitable estimate of the total elastic energy, so that the convergence results of the previous sections can be used. 
In Section 6 we derive the equations satisfied by the limit fields and we prove the strong convergence of the energy. Moreover, we deduce some strong convergences of the fields of the displacement's decomposition. Finally, in Section 7 we summarize the main results.

\section{Decomposition of the displacement of a straight rod with different order of thickness} \label{DEC}

Let $\omega$ be a bounded domain in $\R^2$ with Lipschitzian boundary, diameter equal to $R$ and star-shaped with respect to a disc of radius $R_1$. We choose the origin $O$ of coordinates at the center of gravity of $\omega$ and we choose as coordinates
axes $(O; \Ge_1)$ and $(O; \Ge_2)$ the principal axes of inertia of $\omega$. Notice that, with this reference frame we have
\begin{equation}\label{integral}
\int_{\omega} x_1\, dx_1dx_2 = \int_{\omega} x_2\, dx_1dx_2 = \int_{\omega} x_1x_2\, dx_1dx_2=0.
\end{equation}

The cross-section $\omega_{\eps, x_3}$ of the rod is obtained by transforming $\omega$ with a dilatation of center $O$ and ratio $\eps \rho_{\eps}(x_3),$ where
$$\ds \rho_{\eps}(x_3)= 1-\frac{x_3}{L}\big(1-\frac{\eps}{L}\big),\qquad x_3\in [0,L].$$ We assume $0<\eps<L/2$ and $0<R_1<1/2$ without loss of generality.

\begin{definition} 
The straight rod is defined as follows:
$$\Omega_\eps = \{ x=(x_1, x_2, x_3) \in \R^3 \; | \; x_3 \in (0, L), (x_1, x_2) \in \omega_{\eps, x_3} \},$$
where 
$\ds \omega_{\eps, x_3} = \Big\{ (x_1, x_2) \in \R^2 \;\; | \;\; \Big(\frac{x_1}{\eps \rho_{\eps}(x_3)}, \frac{x_2}{\eps \rho_{\eps}(x_3)}\Big) \in \omega\Big\}=\eps\rho_{\eps}(x_3)\omega$.
\end{definition}

\begin{figure}[H]
  \centering
    \includegraphics [width=0.7\textwidth]{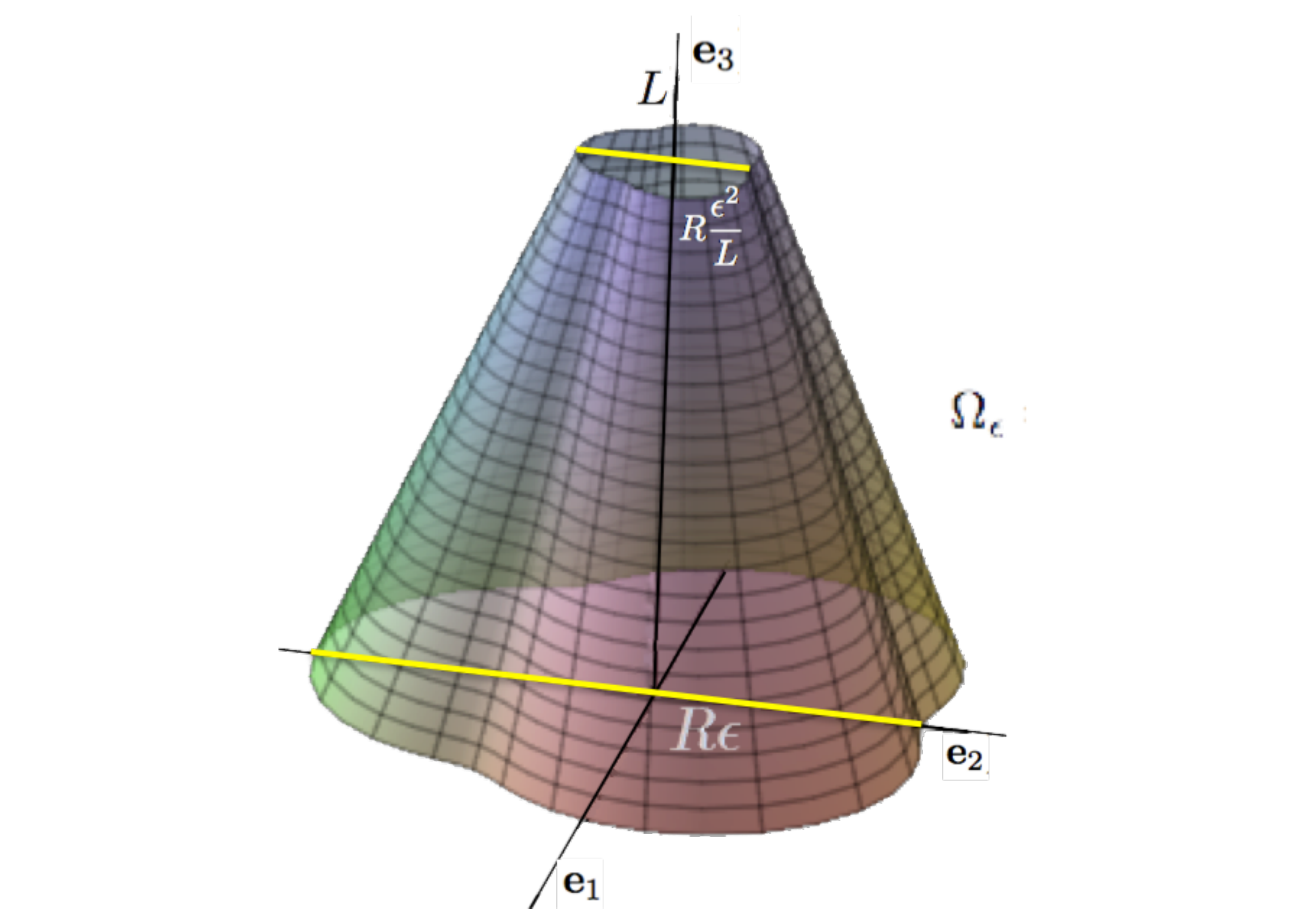}
    \caption{Straight rod $\Omega_\eps$. }
    \label{rod}
\end{figure}
Notice that the center line of the straight rod is the coordinate axis $(O; {\Ge}_3)$. Moreover, the thickness of the thin rod depends on $x_3$, it is given by the function $\ds \eps \rho_\eps(x_3)=
\eps -\frac{x_3}{L}\eps\big(1- \frac{\eps}{L}\big)$. Observe that the diameter of the lower boundary is order $\eps$ while the diameter of the upper boundary is order $\eps^2/L$. (See Figure \ref{rod}.)
\smallskip

Now, we define an elementary displacement associated to a displacement of the rod.

\begin{definition} The elementary displacement $U_e$, associated to $u \in L^1(\Omega_\eps; \R^3),$ is given by:
$$ U_e(x) = \mathcal{U}(x_3)+ \mathcal{R}(x_3) \land (x_1{\Ge}_1+x_2{\Ge}_2), \quad x \in \Omega_\eps,$$
where for a.e. $x_3\in (0,L)$ 
\begin{equation} \label{Decomp}
\left\{
\begin{aligned}
 \mathcal{U} (x_3) &= \frac{1}{|\omega| \rho_\eps(x_3)^2\eps^2}\int_{\omega_{\eps, x_3}} u(x_1, x_2, x_3) \, dx_1 dx_2,  \\
 \mathcal{R}_3(x_3) &= \frac{1}{(I_1 + I_2)\rho_\eps(x_3)^4\eps^4} \int_{\omega_{\eps, x_3}} \big[(x_1{\Ge}_1+x_2{\Ge}_2)  \land u(x_1, x_2, x_3)\big] \cdot  {\Ge}_3\, dx_1 dx_2, \\
 \mathcal{R}_\alpha(x_3) &= \frac{1}{(I_{3 - \alpha})\rho_\eps(x_3)^4\eps^4} \int_{\omega_{\eps, x_3}} \big[(x_1{\Ge}_1+x_2{\Ge}_2)\land u(x_1, x_2, x_3)\big] \cdot  {\Ge}_\alpha\, dx_1 dx_2,  \\
  I_\alpha & = \int_{\omega} x_{\alpha}^2 \, dx_1dx_2, \, \textrm{ for } \alpha \in \{1, 2\}. 
 \end{aligned}
\right. 
\end{equation}
\end{definition}
The first component $\mathcal{U}$ of $U_e$  is the displacement of the center line. The second component $\mathcal{R}$ represents the rotation of the cross-section. Under the action
of an elementary displacement the cross-section $\omega_{\eps, x_3}$ is translated by $\mathcal{U}(x_3)$ and it is rotated around the vector $\mathcal{R}(x_3)$ with an angle $\|\mathcal{R}(x_3)\|_2$, where $\|\cdot\|_2$ is the Euclidean norm in $\R^3$. Observe that, the torsion of the rod is given by the displacement $\mathcal{R}_3(x_3){\Ge}_3 \land (x_1{\Ge}_1+x_2{\Ge}_2)$.

Any displacement $u$ of the rod can be decomposed as 
\begin{equation}\label{Decomp1}
u= U_e + \bar{u}.
\end{equation}
The displacement $\bar{u}$ is the warping.

Next theorem gives estimates of the components of the elementary displacement $U_e$ and of the warping $\bar{u}$ in terms of $\eps$, $\rho_\eps$ and of the strain energy of the displacement $u$. Notice that if $u$ belongs to $H^1(\Omega_\eps)$ the functions $\mathcal{U}$ and $\mathcal{R}$ belong to $H^1((0,L); \R^3)$.
\begin{theorem}\label{a priori estimates}
Let $u \in H^1(\Omega_\eps; \R^3)$ and $u =U_e + \overline{u}$ the decomposition given by (\ref{Decomp})-(\ref{Decomp1}). Then the following estimates hold:

\begin{equation} \label{estimates}
\left\{
\begin{gathered}
\Big\| \frac{\overline{u}}{\rho_\eps} \Big\|_{L^2(\Omega_{\eps}; \R^3)} \leq C \eps \| (\nabla u)_{\mathcal{S}} \|_{[L^2(\Omega_{\eps})]^9},\\
 \Big\| \rho_\eps\, \Big(\frac{d \mathcal{U}}{d x_3} - \mathcal{R} \land \Ge_{3}\Big)  \Big\|_{L^2((0, L); \R^3)} \leq \frac{C}{\eps} \| (\nabla u)_{\mathcal{S}} \|_{[L^2(\Omega_{\eps})]^9},\\
 \Big \| \rho^2_\eps\frac{d\mathcal{R}}{d x_3}  \Big\|_{L^2((0, L); \R^3)}  \leq \frac{C}{\eps^2} \| (\nabla u)_{\mathcal{S}} \|_{[L^2(\Omega_{\eps})]^9},\\
  \| \nabla\bar{u} \|_{[L^2(\Omega_{\eps}; \R^3)]^9} \leq C \| (\nabla u)_{\mathcal{S}} \|_{[L^2(\Omega_{\eps})]^9}.
\end{gathered}
\right. 
\end{equation}

\noindent The constants are independent of $\eps$ and $L$.
\end{theorem}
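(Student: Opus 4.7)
The strategy is to adapt Griso's slicing technique to the variable-thickness geometry. The key observation is that near height $x_3$ the rod has local diameter $\eps\rho_\eps(x_3)$, so the natural unit scale for Korn's inequality there is $\eps\rho_\eps(x_3)$ rather than $\eps$. I would partition $(0,L)$ into intervals $I_k=(a_k,a_{k+1})$ of length $h_k\sim\eps\rho_\eps(a_k)$ and set $\Omega_\eps^k=\Omega_\eps\cap\{x_3\in I_k\}$; since $\rho_\eps$ varies only by $O(\eps)$ across any single $I_k$, the dilation $y=x/(\eps\rho_\eps(a_k))$ sends $\Omega_\eps^k$ to a Lipschitz cylinder that is an $O(\eps)$ perturbation of a fixed reference cylinder. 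On this reference cylinder Korn's second inequality furnishes a rigid displacement
\[
r_k(x)=\mathbf{a}_k+\mathbf{b}_k\land\big(x_1\Ge_1+x_2\Ge_2+(x_3-a_k)\Ge_3\big)
\]
such that $\|u-r_k\|_{L^2(\Omega_\eps^k)}\le C\eps\rho_\eps(a_k)\|(\nabla u)_\mathcal{S}\|_{L^2(\Omega_\eps^k)}$ and $\|\nabla(u-r_k)\|_{L^2(\Omega_\eps^k)}\le C\|(\nabla u)_\mathcal{S}\|_{L^2(\Omega_\eps^k)}$, with a constant independent of $k$.

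Next, I would identify the rigid motion produced by $U_e$ with $r_k$ on each slice. The formulas (\ref{Decomp}), together with (\ref{integral}), state exactly that $\mathcal{U}(x_3)$ and $\mathcal{R}(x_3)$ are the $L^2(\omega_{\eps,x_3})$ projections of $u$ onto the constant and first-moment modes. Applying these same projections to $u-r_k$ and using Cauchy--Schwarz produces pointwise bounds of the form
\[
|\mathcal{U}(x_3)-\mathbf{a}_k-(x_3-a_k)\mathbf{b}_k\land\Ge_3|+\eps\rho_\eps(x_3)|\mathcal{R}(x_3)-\mathbf{b}_k|\le\frac{C}{\eps\rho_\eps(x_3)}\|u-r_k\|_{L^2(\omega_{\eps,x_3})},
\]
for a.e.\ $x_3\in I_k$; the factor $\eps\rho_\eps$ in front of $\mathcal{R}$ arises from the $\eps^4\rho_\eps^4$ weights on the moments of inertia in (\ref{Decomp}).

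The derivative estimates in the second and third lines of (\ref{estimates}) then come from the transition between consecutive slices. Since both $r_k$ and $r_{k+1}$ approximate $u$ on $\Omega_\eps^k\cup\Omega_\eps^{k+1}$, their difference, itself a rigid motion, has $L^2$ norm controlled by the sum of the two Korn residuals; dividing by the volume $\sim(\eps\rho_\eps(a_k))^3$ translates this into bounds on $|\mathbf{a}_{k+1}-\mathbf{a}_k|$ and $\eps\rho_\eps(a_k)|\mathbf{b}_{k+1}-\mathbf{b}_k|$, which read as finite differences of $\mathcal{U}$ and $\mathcal{R}$ over an interval of length $h_k$. Squaring, summing in $k$ weighted by $\rho_\eps$ respectively $\rho_\eps^2$, and using $d\mathcal{U}/dx_3-\mathcal{R}\land\Ge_3\approx(\mathbf{a}_{k+1}-\mathbf{a}_k)/h_k-\mathbf{b}_k\land\Ge_3$, produces the required inequalities; the factors $1/\eps$ and $1/\eps^2$ arise from one and two powers of $(\eps\rho_\eps)^{-1}$. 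For the warping, $\bar u=u-U_e$ has vanishing zeroth and first cross-sectional moments by construction, so a weighted Poincar\'e inequality on $\omega_{\eps,x_3}$ (uniform after rescaling to $\omega$) gives $\|\bar u(\cdot,x_3)\|_{L^2(\omega_{\eps,x_3})}\le C\eps\rho_\eps(x_3)\|\nabla_{x_1,x_2}\bar u(\cdot,x_3)\|_{L^2(\omega_{\eps,x_3})}$; combined with the decomposition $\bar u=(u-r_k)-(U_e-r_k)$ on $\Omega_\eps^k$ and the $H^1$ Korn estimate of Step~1, this yields both the first and the fourth lines of (\ref{estimates}).

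I expect the main obstacle to be the third estimate, which carries the heavy weight $\rho_\eps^2$ and the strong singular factor $1/\eps^2$: one must verify that the slice length $\eps\rho_\eps(a_k)$ and the cross-section scale conspire after telescoping to produce exactly $\rho_\eps^2/\eps^2$ on the left, and that Korn's constant is genuinely uniform in $k$ despite the slice geometry drifting with height. The $O(\eps)$ variation of $\rho_\eps$ across each $I_k$ is what makes the uniformity possible, and tracking this variation through the transition step is where the variable-thickness geometry really bites.
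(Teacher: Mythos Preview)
Your partition into slices of length $h_k\sim\eps\rho_\eps(a_k)$ and the local Korn/rigid-motion approximation on each slice match the paper exactly, as does the identification of $\mathcal{U},\mathcal{R}$ with the slice parameters $\mathbf{a}_k,\mathbf{b}_k$ via the cross-sectional projections. The first and fourth lines of \eqref{estimates} would indeed follow along the lines you indicate.

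The gap is in your treatment of the two derivative estimates. Telescoping gives you control of $|\mathbf{a}_{k+1}-\mathbf{a}_k|$ and $|\mathbf{b}_{k+1}-\mathbf{b}_k|$, i.e.\ of the derivative of a piecewise-linear interpolant of the $\mathbf{b}_k$. But $\mathcal{U}$ and $\mathcal{R}$ here are defined in \eqref{Decomp} as genuine cross-sectional averages of $u$, and the only information relating them to $\mathbf{a}_k,\mathbf{b}_k$ is the $L^2(I_k)$ closeness you derived; that says nothing about $d\mathcal{R}/dx_3$ on $I_k$ ($\mathcal{R}$ could oscillate within a slice while staying $L^2$-close to $\mathbf{b}_k$). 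The ``$\approx$'' in $d\mathcal{U}/dx_3-\mathcal{R}\land\Ge_3\approx(\mathbf{a}_{k+1}-\mathbf{a}_k)/h_k-\mathbf{b}_k\land\Ge_3$ is precisely the missing step, and there is no cheap way to justify it without already controlling the derivative. (A minor second issue: your slices are disjoint, so $r_k$ and $r_{k+1}$ do not both approximate $u$ on a common region; you would need overlapping slices to compare them.)

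The paper avoids this by differentiating the defining integrals directly. Writing $\mathcal{U}(x_3)$ and the moment function $V(x_3)$ as integrals over the fixed $\omega$ via the change of variables $x_\alpha=\eps\rho_\eps(x_3)X_\alpha$, one differentiates in $x_3$ and obtains $d\mathcal{U}/dx_3-\mathcal{R}\land\Ge_3$ (and similarly $dV/dx_3$) as cross-sectional averages of the quantities $\partial u/\partial x_i-\mathcal{R}\land\Ge_i$, plus harmless terms carrying a factor $\rho'_\eps/\rho_\eps$. These integrands are exactly what the slice-wise Korn estimate controls: from $\|\nabla(u-r_k)\|\le C\|(\nabla u)_{\mathcal S}\|$ and $\|\mathcal{R}-\mathbf{b}_k\|_{L^2(I_k)}$ small one gets $\|\partial u/\partial x_i-\mathcal{R}\land\Ge_i\|_{L^2(\Omega_\eps^k)}\le C\|(\nabla u)_{\mathcal S}\|_{L^2(\Omega_\eps^k)}$, and averaging over the cross-section produces the factors $1/(\eps\rho_\eps)$ and $1/(\eps\rho_\eps)^2$ you were expecting. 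This direct differentiation is where the variable cross-section really enters (through the $\rho'_\eps$ terms), and it replaces the transition argument entirely.
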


\begin{proof}To prove the above estimates we are going to introduce a partition of the rod $\Omega_\eps$ in several small portions where every of these small rods are star-shaped with respect to suitable balls 
which verify that the ratio between their radius and the diameters of the portions remains uniformly bounded. Then we use the approximation of the displacement $u$
by a rigid body displacement in each portion, (see Theorem 2.3 in \cite{Gr08}).  
\medskip

{\it Step 1. } Construction of the partition.
\medskip

 We start by considering the first portion of the rod
$$\Omega^0_{\eps}= \{ x=(x_1, x_2, x_3) \in \R^3 \; | \; x_3 \in (0, \eps), (x_1, x_2) \in \omega_{\eps, x_3} \}.$$

First, notice that $\Omega^0_{\eps}$ has a diameter less than $(R+1)\eps$ and all the cross-sections of $\Omega^0_{\eps}$ are  star-shaped with respect to a disc of radius $R_1\eps \rho_\eps(\eps)$.
Therefore, by a simple geometrical argument, it is easy to check that this portion is star-shaped with respect to a ball of radius $R_1\eps\rho_\eps(\eps)$.
\smallskip

We consider now a partition of the interval [0,L] defined as
$$s^0_{\eps}= 0 < s^1_{\eps}= \eps < s^2_{\eps}=  s^1_{\eps} +\eps\rho_\eps(s^1_{\eps})< \dots <  s^{N_\eps}_{\eps}=  s^{N_\eps-1}_{\eps} + \eps\rho_\eps(s^{N_\eps-1}_{\eps})\leq L \leq s^{N_\eps+1}_{\eps}=  s^{N_\eps}_{\eps} + \eps\rho_\eps(s^{N_\eps}_{\eps}) .$$
Hence, the points of the partition $\{s^k_\eps\}$ are the elements of an arithmetico-geometric sequence
$$s^k_\eps= \eps \frac{1-\rho_\eps(\eps)^k}{1-\rho_\eps(\eps)} \quad \Longrightarrow \quad \lim_{k\to \infty} s^k_\eps = \frac{\eps}{1-\rho_\eps(\eps)} = \frac{L}{\ds 1-\frac{\eps}{L}}>L.$$
It makes sense to define $N_\eps$ as the largest integer such that $s^{N_\eps}_{\eps} \leq L$.


\smallskip

The $(k+1)$-portion of the rod is defined as 
$$\Omega^k_{\eps} = \{ x \in \R^3 \; | \; x_3 \in \big(s^k_\eps, s^k_\eps+\eps\rho_\eps(s^k_\eps)\big), (x_1, x_2) \in \omega_{\eps, x_3} \}, \quad 0\leq k \leq N_{\eps} -2,$$
and
$$\Omega^{N_{\eps}-1}_\eps = \{ x \in \R^3 \; | \; x_3 \in \big(s^{N_\eps-1}, L\big), (x_1, x_2) \in \omega_{\eps, x_3} \}.$$



Therefore, we obtain
$$\Omega_\eps = \textrm{Int}\Big\{\bigcup_{k=0}^{N_\eps-1} \overline{\Omega^k_{\eps}}\Big\}.$$

\begin{figure}[H]
  \centering
    \includegraphics [width=0.7\textwidth]{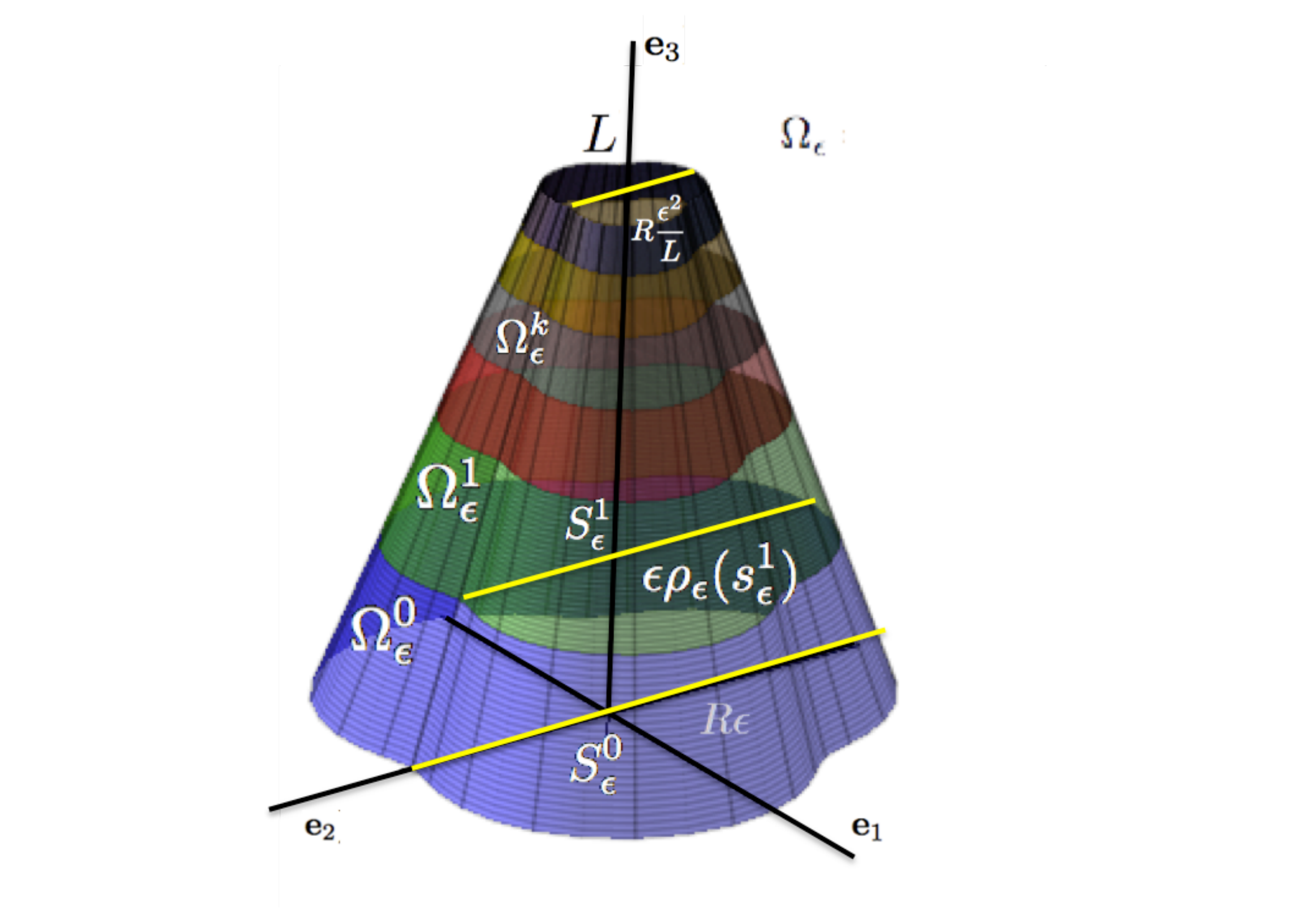}
    \caption{Partition of straight rod $\Omega_\eps$. }
    \label{rod1}
\end{figure}
{\it Step 2.}  Rigid body approximation of $u$ in the portions.
\medskip

Since $\Omega^k_{\eps}$ $(0\leq k \leq N_{\eps} -2)$ is obtained by transforming $\Omega^0_{\eps}$ by a dilation of ratio $\rho_\eps (s^{k}_\eps) $  we can conclude that $\Omega^k_{\eps}$ $(0\leq k \leq N_{\eps} -2)$
is star-shaped with respect to a ball of radius $ R_1\eps \rho_\eps (s^{k+1}_\eps)$ and its diameter is less than $(R+1)\eps\rho_\eps (s^{k}_\eps)$. Moreover, the last portion $\Omega^{N_{\eps}-1}_\eps$ is star-shaped with respect to a ball of radius $ R_1\eps \rho_\eps (s^{N_\eps+1}_\eps)$ and its diameter is less than $2(R+1)\eps\rho_\eps (s^{N_\eps-1}_\eps).$
\smallskip

From Theorem 2.3 in \cite{Gr08} there exists a rigid body displacement $r_k$ $(0\leq k \leq N_\eps-1)$ such that
\begin{equation}\label{rigid}
\begin{aligned}
 \| u - r_k \|^2_{L^2(\Omega^k_{\eps}; \R^3)} & \leq C(R + 1)^2 \eps^2 \rho_\eps( s^k_{\eps})^2 \| (\nabla u)_{\mathcal{S}} \|^2_{[L^2(\Omega^k_{\eps})]^9}, \\
\| \nabla(u - r_k) \|^2_{[L^2(\Omega^k_{\eps}; \R^3)]^9} & \leq C (R+1)^2 \| (\nabla u)_{\mathcal{S}} \|^2_{[L^2(\Omega^k_{\eps})]^9}.
\end{aligned} \end{equation}
The constants depend only on the reference cross-section $\omega$ and on the ratio between the diameter of the portion and the radius of the ball inside (see Theorem 2.3 in \cite{Gr08})
\begin{equation}\label{ratio} 
\ds \frac{(R+1)\eps \rho_\eps(s^k_{\eps})}{R_1\eps \rho_\eps(s^{k+1}_{\eps})} = \frac{R+1}{R_1}\frac{\rho_\eps(s^k_{\eps})}{\ds \rho_\eps(s^{k}_{\eps}) - \frac{\eps\rho_\eps(s^{k}_{\eps})}{L}\Big(1-\frac{\eps}{L}\Big)} = \frac{R+1}{R_1}\frac{1}{\rho_\eps(\eps)}\leq {4\over 3}\frac{R+1}{R_1}, \, 0\leq k \leq N_\eps-2.
\end{equation}
Observe that for the last portion the ratio is less than $\ds 4\frac{R+1}{R_1}.$
\medskip

{\it Step 3.} First estimate in \eqref{estimates}. 
\medskip

Recall  that the rigid body displacements $r_k$ are of the form
$$r_k(x) = A_k + B_k \land (x_1{\Ge}_1 +x_2{\Ge}_2 + (x_3 - s^k_{\eps}){\Ge}_3), \quad x=(x_1, x_2, x_3) \in \Omega^k_{\eps} \textrm{ and } A_k, B_k \in \R^3.$$
Now, we are going to prove $(0\leq k \leq N_\eps-2)$\footnotemark$\!$ \footnotetext{If $k=N_\eps-1$ we have to replace $s^{k+1}_{\eps}$ by $L$.}
\begin{align}
& \| \mathcal{U} - A_k - B_k \land ( x_3 - s_{\eps,k}){\Ge}_3  \|^2_{L^2((s^k_{\eps}, s^{k+1}_{\eps}); \R^3)} \leq  C \| (\nabla u)_{\mathcal{S}} \|^2_{[L^2(\Omega^k_{\eps})]^9}\label{U},\\
& \| \mathcal{R} - B_k \|^2_{L^2((s^k_{\eps}, s^{k+1}_{\eps}); \R^3)} \leq  \frac{C}{\eps^2 \rho_\eps(s^{k+1}_{\eps})^2 } \| (\nabla u)_{\mathcal{S}} \|^2_{[L^2(\Omega^k_{\eps})]^9}\label{R}.
\end{align}
 The constants do not depend on $k$ and $\eps$. 
\smallskip

 The proof is similar for both inequalities, we will show only the first one. Taking the mean value of $u-r_k$ over the cross-sections of the portion $\Omega^k_{\eps}$ and by the definition of the elementary displacement and \eqref{integral}  we have
\begin{eqnarray*}
  \lefteqn{\| \mathcal{U} - A_k - B_k \land ( x_3 - s_{\eps,k}){\Ge}_3  \|^2_{L^2((s^k_{\eps}, s^{k+1}_{\eps}); \R^3)} = \int_{s^k_{\eps}}^ {s^{k+1}_{\eps}} |\mathcal{U}(x_3) - A_k - B_k \land ( x_3 - s_{\eps,k}){\Ge}_3|^2\, dx_3}\\
 & =& \int_{s^k_{\eps}}^ {s^{k+1}_{\eps}} \Big| \frac{1}{|\omega| \rho_\eps(x_3)^2\eps^2}\int_{\omega_{\eps, x_3}} [u(x_1, x_2, x_3)  - r_k(x_1, x_2, x_3)]\, dx_1 dx_2\Big|^2\, dx_3\\
  &\leq& \int_{s^k_{\eps}}^{ s^{k+1}_{\eps}} \frac{1}{|\omega| \rho_\eps(x_3)^2\eps^2}\int_{\omega_{\eps, x_3}}|u(x)  - r_k(x)|^2\, dx\\
  &\leq& \frac{1}{|\omega| \rho_\eps(s^{k+1}_\eps)^2\eps^2} \int_{\Omega^k_{\eps}} |u(x)  - r_k(x)|^2\, dx.
 \end{eqnarray*}
 Then using (\ref{rigid})$_1$ and taking into account (\ref{ratio}) we obtain the expected estimate 
  \begin{align*}
    \| \mathcal{U} - A_k - B_k \land ( x_3 - s_{\eps,k}){\Ge}_3  \|^2_{L^2((s^k_{\eps}, s^{k+1}_{\eps}); \R^3)} &\leq \frac{C(R + 1)^2 \eps^2 \rho_\eps( s^k_{\eps})^2}{|\omega|\eps^2 \rho_\eps(s^{k+1}_\eps)^2} \| (\nabla u)_{\mathcal{S}} \|^2_{[L^2(\Omega^k_{\eps})]^9}
   \leq C \| (\nabla u)_{\mathcal{S}} \|^2_{[L^2(\Omega^k_{\eps})]^9},
  \end{align*}
where the constant does not depend on $\eps$ and $k$. 
\vskip 1mm

Consequently, from (\ref{U}) and (\ref{R}), taking into account  the definition of the elementary displacement and $\ds \int_{\omega_{\eps,x_3}} x_{\alpha}^2 \, dx_1dx_2 = \eps^4 \rho_\eps(x_3)^4 I_{\alpha}$ we have
\begin{align*}
&\|  U_e - r_k  \|_{L^2(\Omega^k_{\eps})} \leq \|  \mathcal{U} - A_k - B_k \land ( x_3 - s_{\eps,k}){\Ge}_3 \|_{L^2(\Omega^k_{\eps})} + \| (\mathcal{R} - B_k)\land (x_1{\Ge}_1 +x_2{\Ge}_2)
\|_{L^2(\Omega^k_{\eps})}\\
&\leq \int_{s^k_{\eps}}^{ s^{k+1}_{\eps}} |\omega| \rho_\eps(x_3)^2\eps^2 |\mathcal{U}(x_3)  - A_k - B_k \land ( x_3 - s_{\eps,k}){\Ge}_3|^2\, dx_3 + C \eps^4 \rho_\eps(s^k_{\eps})^4  \|\mathcal{R} - B_k\|^2_{L^2(s^k_{\eps}, s^{k+1}_{\eps})} \\
& \leq C  \eps^2 \rho_\eps(s^k_{\eps})^2 \| (\nabla u)_{\mathcal{S}} \|^2_{[L^2(\Omega^k_{\eps})]^9}.
\end{align*}

Thus, we can replace $r_k$ by $U_e$ in \eqref{rigid}$_1$ 
 $$
  \| u - U_e \|^2_{L^2(\Omega^k_{\eps}; \R^3)} \leq C \eps^2 \rho_\eps( s^k_{\eps})^2 \| (\nabla u)_{\mathcal{S}} \|^2_{[L^2(\Omega^k_{\eps})]^9}.$$
 Moreover, since $\ds 1\leq \frac{\rho_\eps(s^k_{\eps})}{\rho_\eps(x_3)}\leq 2$ for $x_3\in (s^k_{\eps}, s^{k+1}_{\eps})$, we get
$$
\Big\|\frac{ u - U_e}{\rho_\eps} \Big\|^2_{L^2(\Omega^k_{\eps}; \R^3)} \leq C \eps^2 \| (\nabla u)_{\mathcal{S}} \|^2_{[L^2(\Omega^k_{\eps})]^9}.
$$
Adding all these inequalities lead to the first estimate involving the warping
\begin{equation}\label{estU}
\Big\| \frac{u - U_e}{\rho_\eps} \Big\|^2_{L^2(\Omega_{\eps}; \R^3)}\leq
C \eps^2 \| (\nabla u)_{\mathcal{S}} \|^2_{[L^2(\Omega_{\eps})]^9}.
\end{equation}
\medskip

{\it Step 4.} Second estimate in \eqref{estimates}.
 \medskip
 
First of all, we compute the derivative of $\mathcal{U}$ with respect to $x_3$. Since the diameter of the cross-section depends on $x_3$ we rewrite $\mathcal{U}$ performing a change of variables
$$ \mathcal{U} (x_3) = \frac{1}{|\omega|\eps^2}\int_{\omega_\eps} u(\rho_\eps(x_3) s_1, \rho_\eps(x_3) s_2, x_3) \, ds_1 ds_2, $$
where $\ds \omega_\eps =\eps \omega$.
The derivative is given by
$$\frac{d\mathcal{U}}{d x_3}(x_3) = \frac{1}{|\omega|\eps^2}\int_{\omega_\eps} \Big[\frac{\partial u}{\partial x_1} \rho_\eps' (x_3) s_1 + \frac{\partial u}{\partial x_2}\rho_\eps' (x_3) s_2 + \frac{\partial u}{\partial x_3}\Big]  \, ds_1 ds_2, \quad \textrm{ for a.e } x_3 \in (0, L).$$
Undoing the change of variables we get
$$
\frac{ d\mathcal{U}}{d x_3}(x_3) = \frac{1}{|\omega| \eps^2\rho_\eps(x_3)^2}\int_{\omega_{\eps, x_3}} \Big[\frac{\partial u}{\partial x_1} x_1 \frac{\rho_\eps' (x_3)}{\rho_\eps(x_3)}  + \frac{\partial u}{\partial x_2} x_2 \frac{\rho_\eps' (x_3)}{\rho_\eps(x_3)} + \frac{\partial u}{\partial x_3}\Big]  \, dx_1 dx_2, \quad \textrm{ for a.e } x_3 \in (0, L). $$
From (\ref{rigid}) we have 
$$\Big\| \frac{\partial u}{\partial x_i} - B_k \land {\Ge}_{i} \Big\|^2_{L^2(\Omega^k_{\eps}; \R^3)} \leq C \| (\nabla u)_{\mathcal{S}} \|^2_{[L^2(\Omega^k_{\eps})]^9}, \quad i \in \{1,2,3\}.$$
Moreover, from (\ref{R}) we may replace $B_k$ by $\mathcal{R}$
$$\Big\| \frac{\partial u}{\partial x_i} - \mathcal{R} \land {\Ge}_{i} \Big\|^2_{L^2(\Omega^k_{\eps}; \R^3)} \leq C \| (\nabla u)_{\mathcal{S}} \|^2_{[L^2(\Omega^k_{\eps})]^9}, \quad i \in \{1,2,3\}.$$
Adding all these inequalities we obtain 
\begin{equation}\label{deriv}
\Big \| \frac{\partial u}{\partial x_i} - \mathcal{R} \land {\Ge}_{i} \Big\|^2_{L^2(\Omega_{\eps}; \R^3)} \leq C \| (\nabla u)_{\mathcal{S}} \|^2_{[L^2(\Omega_{\eps})]^9}, \quad i \in \{1,2,3\}.
\end{equation}
Taking into account \eqref{integral} we have  for a.e. $x_3 \in (0, L)$
\begin{equation*}
\begin{aligned}
\frac{ d\mathcal{U}}{d x_3}(x_3) - \mathcal{R}(x_3)\land {\Ge}_{3}  &= \frac{1}{|\omega| \eps^2\rho_\eps(x_3)^2}\int_{\omega_{\eps, x_3}} \Big[ \Big(\frac{\partial u}{\partial x_1} (x)- \mathcal{R}(x_3) \land {\Ge}_{1}\Big) x_1 \frac{\rho_\eps' (x_3)}{\rho_\eps(x_3)}\\
&+ \Big(\frac{\partial u}{\partial x_2}(x) - \mathcal{R}(x_3) \land {\Ge}_{2}\Big) x_2 \frac{\rho_\eps' (x_3)}{\rho_\eps(x_3)}+\Big(\frac{\partial u}{\partial x_3} (x)- \mathcal{R}(x_3) \land {\Ge}_{3}\Big)\Big]  \, dx_1 dx_2. 
\end{aligned}
\end{equation*}
Using \eqref{deriv} leads to $(0\leq k \leq N_\eps-2)$\footnotemark$\!$ \footnotetext{If $k=N_\eps-1$ we have to replace $s^{k+1}_{\eps}$ by $L$.}
\begin{align*}
 \Big\| \frac{d \mathcal{U}}{d x_3} - \mathcal{R} \land {\Ge}_{3} \Big \|^2_{L^2(s^k_{\eps}, s^{k+1}_{\eps})} &\leq  \frac{C}{\rho_\eps(s^{k+1}_{\eps})^2} \| (\nabla u)_{\mathcal{S}} \|^2_{[L^2(\Omega^k_{\eps})]^9} + \frac{C}{\eps^2\rho_\eps(s^{k+1}_{\eps})^2} \| (\nabla u)_{\mathcal{S}} \|^2_{[L^2(\Omega^k_{\eps})]^9} \\
 &\leq \frac{C}{\eps^2\rho_\eps(s^{k+1}_{\eps})^2} \| (\nabla u)_{\mathcal{S}} \|^2_{[L^2(\Omega^k_{\eps})]^9}.
\end{align*}
  Hence, since $\ds 1\leq \frac{\rho_\eps(x_3)}{\rho_\eps(s^{k+1}_{\eps})}\leq 2$ for $x_3\in (s^k_{\eps}, s^{k+1}_{\eps})$ we obtain
    \begin{equation}\label{DU}
 \Big\| \rho_\eps\Big(\frac{d \mathcal{U}}{d x_3} - \mathcal{R} \land {\Ge}_{3}\Big)  \Big\|^2_{L^2(s^k_{\eps}, s^{k+1}_{\eps})}
 \leq \frac{C}{\eps^2} \| (\nabla u)_{\mathcal{S}} \|^2_{[L^2(\Omega_{\eps})]^9}.
 \end{equation}
  Adding all these inequalities we get the desired estimate
  \begin{equation}\label{derU}
\Big \| \rho_\eps\Big(\frac{d \mathcal{U}}{dx_3} - \mathcal{R} \land {\Ge}_{3}\Big)  \Big\|^2_{L^2((0, L); \R^3)}
 \leq \frac{C}{\eps^2} \| (\nabla u)_{\mathcal{S}} \|^2_{[L^2(\Omega_{\eps})]^9}.
 \end{equation}

 \medskip
 
 {\it Step 5.} Third estimate in \eqref{estimates}.
  \medskip
 
First of all, we introduce the function:
 
$$ V(x_3) = \frac{1}{\eps^4\rho_\eps(x_3)^4} \int_{\omega_{\eps, x_3}} [(x_1{\Ge}_1+x_2{\Ge}_2)  \land u(x_1, x_2, x_3)] \, dx_1 dx_2.$$
To calculate the derivative with respect to $x_3$ we perform a change of variables which allows us to write the function $V$ as follows:
$$
\begin{aligned}
 V(x_3) &= \frac{1}{\eps^4\rho_\eps(x_3)^2} \int_{\omega_\eps} (\rho_\eps(x_3)s_1{\Ge}_1+\rho_\eps(x_3)s_2{\Ge}_2) \land u(\rho_\eps(x_3)s_1, \rho_\eps(x_3)s_2, x_3)] \, ds_1 ds_2\\
& = \frac{1}{\eps^4\rho_\eps(x_3)} \int_{\omega_\eps} (s_1{\Ge}_1+s_2{\Ge}_2) \land u(\rho_\eps(x_3)s_1, \rho_\eps(x_3)s_2, x_3)] \, ds_1 ds_2.
\end{aligned}
$$
Then deriving with respect to $x_3$ gives (for a.e. $x_3\in (0,L)$)
$$
\begin{aligned}
\frac{dV}{dx_3}(x_3)= &\frac{-2\rho_\eps'(x_3)}{\eps^4\rho_\eps(x_3)^2}\int_{\omega_\eps} [( s_1{\Ge}_1+s_2{\Ge}_2)  \land u(\rho_\eps(x_3)s_1, \rho_\eps(x_3)s_2, x_3)] \, ds_1 ds_2 \\
+ &\frac{1}{\eps^4\rho_\eps(x_3)}\int_{\omega_\eps}\Big[(s_1{\Ge}_1+s_2{\Ge}_2)\land \Big(\frac{\partial u}{\partial x_1} \rho_\eps' (x_3) s_1 + \frac{\partial u}{\partial x_2}\rho_\eps' (x_3) s_2 + \frac{\partial u}{\partial x_3}\Big)\Big] \, ds_1 ds_2
\end{aligned}
$$
Undoing the change of variables we have (for a.e. $x_3\in (0,L)$)
\begin{equation}\label{derivV}
\begin{aligned}
\frac{dV}{ dx_3} (x_3)= &\frac{-2\rho_\eps'(x_3)}{\eps^4\rho_\eps(x_3)^5}\int_{\omega_\eps,x_3} [(x_1{\Ge}_1+x_2{\Ge}_2)  \land u(x_1, x_2, x_3)] \, dx_1 dx_2 \nonumber\\
+ & \frac{1}{\eps^4\rho_\eps(x_3)^4}\int_{\omega_{\eps, x_3}}\Big[(x_1{\Ge}_1+x_2{\Ge}_2) \land \Big(\frac{\partial u}{\partial x_1} \frac{\rho_\eps'(x_3)}{\rho_\eps(x_3)}x_1 + \frac{\partial u}{\partial x_2}\frac{\rho_\eps'(x_3)}{\rho_\eps(x_3)}x_2+ \frac{\partial u}{\partial x_3}\Big)\Big] \, dx_1 dx_2.
\end{aligned}
\end{equation}
In view of \eqref{integral} we can write (for a.e. $x_3\in (0,L)$)
\begin{equation*}
\begin{aligned}
&\frac{dV}{ dx_3}(x_3) = \frac{-2\rho_\eps'(x_3)}{\eps^4\rho_\eps(x_3)^5}\int_{\omega_\eps,x_3} \Big[(x_1{\Ge}_1+x_2{\Ge}_2)  \land \Big(u(x)- \mathcal{U}(x_3) - \mathcal{R}(x_3) \land (x_1{\Ge}_1+x_2{\Ge}_2)\Big)\Big] \, dx_1 dx_2\\
 &+  \frac{\rho_\eps'(x_3)}{\eps^4\rho_\eps(x_3)^5}\int_{\omega_{\eps, x_3}}\Big\{(x_1{\Ge}_1+x_2{\Ge}_2) \land \Big[ x_1\Big(\frac{\partial u}{\partial x_1} (x) - \mathcal{R}(x_3) \land {\Ge}_1\Big)+x_2\Big(\frac{\partial u}{\partial x_2}(x) - \mathcal{R}(x_3) \land {\Ge}_2)\Big)\Big\}\, dx_1 dx_2\\
&+ \frac{1}{\eps^4\rho_\eps(x_3)^4}\int_{\omega_{\eps, x_3}}(x_1{\Ge}_1+x_2{\Ge}_2) \land\Big(\frac{\partial u}{\partial x_3} (x)- \mathcal{R}(x_3) \land {\Ge}_{3}\Big)\, dx_1 dx_2.
 \end{aligned}
 \end{equation*}
  Using (\ref{estU}) and (\ref{DU}) leads to $(0\leq k \leq N_\eps-2)$\footnotemark$\!$ \footnotetext{If $k=N_\eps-1$ we have to replace $s^{k+1}_{\eps}$ by $L$.}
 \begin{align*}
 \Big \| \frac{dV}{dx_3}  \Big\|^2_{L^2((s^k_{\eps}, s^{k+1}_{\eps}); \R^3)}&\leq  \frac{C}{\eps^2\rho_\eps(s^{k+1}_{\eps})^2}\Big\|\frac{ u - U_e}{\rho_\eps} \Big\|^2_{L^2(\Omega^k_{\eps}; \R^3)}  + \frac{C}{\eps^2\rho_\eps(s^{k+1}_{\eps})^2} \sum_{i=1}^3 \Big\| \frac{\partial  u}{d x_3} - \mathcal{R} \land {\Ge}_{i}  \Big\|^2_{L^2(\Omega^k_{\eps}; \R^3)}\\
  &\leq  \frac{C}{\rho_\eps(s^{k+1}_{\eps})^4} \| (\nabla u)_{\mathcal{S}} \|^2_{[L^2(\Omega^k_{\eps}]^9} + \frac{C}{\eps^4\rho_\eps(s^{k+1}_{\eps})^4} \| (\nabla u)_{\mathcal{S}} \|^2_{[L^2(\Omega^k_{\eps}]^9}\\
  &\leq \frac{C}{\eps^4\rho_\eps(s^{k+1}_{\eps})^4} \| (\nabla u)_{\mathcal{S}} \|^2_{[L^2(\Omega^k_{\eps})]^9}.
   \end{align*}
   Thus, since $\ds 1\leq \frac{\rho_\eps(x_3)}{\rho_\eps(s^{k+1}_{\eps})}\leq 2$ for $x_3\in (s^k_{\eps}, s^{k+1}_{\eps})$ and adding all these inequalities we have
  \begin{equation*}
 \Big \| \rho_\eps^2 \frac{dV}{d x_3} \Big\|^2_{L^2((0, L); \R^3)} \leq \frac{C}{\eps^4}  \| (\nabla u)_{\mathcal{S}} \|^2_{[L^2(\Omega_{\eps})]^9}.
  \end{equation*}

Since $\ds (I_1 + I_2) \mathcal{R} = V + \frac{I_1}{I_2}(V \cdot {\Ge}_1) {\Ge}_1 + \frac{I_2}{I_1}(V \cdot {\Ge}_2) {\Ge}_2$ we get the required estimate
  \begin{equation}\label{derR}
\Big  \| \rho_\eps^2 \frac{d\mathcal{R}}{d x_3} \Big \|^2_{L^2((0, L); \R^3)} \leq \frac{C}{\eps^4} \| (\nabla u)_{\mathcal{S}}  \|^2_{[L^2(\Omega_{\eps})]^9}.
 \end{equation}
 \medskip
 {\it Step 6.} Fourth estimate.
  
  Observe that
\begin{align*}
\frac{\partial}{\partial x_\alpha} ( u - U_e) &= \frac{\partial u}{\partial x_\alpha} - \mathcal{R} \land {\Ge}_\alpha, \quad \textrm{ for } \alpha \in \{1,2\},\\
 \frac{\partial}{\partial x_3} ( u - U_e) &= \frac{\partial u}{\partial x_3} - \frac{\partial \mathcal{U}}{\partial x_3} - \frac{\partial \mathcal{R}}{\partial x_3} \land (x_1{\Ge}_1+x_2{\Ge}_2)\\
  & =\frac{\partial u}{\partial x_3} -  \mathcal{R} \land {\Ge}_3 + \mathcal{R} \land {\Ge}_3 - \frac{\partial \mathcal{U}}{\partial x_3} - \frac{\partial \mathcal{R}}{\partial x_3} \land (x_1{\Ge}_1+x_2{\Ge}_2).
\end{align*}
 From these expressions and taking into account \eqref{deriv}, \eqref{derU} and \eqref{derR} we can conclude 
 $$\| \nabla\bar{u} \|^2_{[L^2(\Omega_{\eps}; \R^3)]^9} \leq C \| (\nabla u)_{\mathcal{S}} \|^2_{[L^2(\Omega_{\eps})]^9},$$
 which ends the proof.
 \end{proof}

 \section{ Estimates for the clamped rod at the bottom.}
 From now on, we will assume that the rod $\Omega_\eps$ is clamped at the bottom, $\Gamma_{\eps,0}=\omega_{\eps,0} \times \{0\}.$ Then the space of admissible displacements of the rod is
 $$H^1_{\Gamma_{\eps,0}}(\Omega_\eps; \R^3) = \{ u \in H^1(\Omega_\eps; \R^3)\; | \; u = 0 \textrm{ on } \Gamma_{\eps,0}\}.$$
 Observe that the elementary displacement $U_e$ associated to any $u \in H^1_{\Gamma_{\eps,0}}(\Omega_\eps; \R^3)$ is equal to zero in the fixed part of the rod, $\mathcal{U}(0) = \mathcal{R}(0)=0$.
 
Using estimates (\ref{estimates}) and the boundary condition we deduce estimates on $\ds \mathcal{R}, \frac{d\mathcal{U}}{dx_3}$ and $\mathcal{U}$.
\begin{lemma}\label{Lem31}
Assuming the rod clamped at the bottom, then we have
\begin{equation} \label{estimates1}
\left\{
\begin{aligned}
 \big \| \rho_\eps\mathcal{R}  \big\|_{L^2((0, L); \R^3)}  &\leq \frac{C L}{\eps^2} \| (\nabla u)_{\mathcal{S}} \|_{[L^2(\Omega_{\eps})]^9},\\
  \Big\| \rho_\eps\, \frac{d \mathcal{U}_\alpha}{d x_3} \Big\|_{L^2(0, L)} &\leq \frac{CL}{\eps^2} \| (\nabla u)_{\mathcal{S}} \|_{[L^2(\Omega_{\eps})]^9},\quad\hbox{for }\alpha\in\{1,2\}, \\
   \Big\| \rho_\eps\, \frac{d \mathcal{U}_3}{d x_3} \Big\|_{L^2(0, L)}  &\leq \frac{C}{\eps} \| (\nabla u)_{\mathcal{S}} \|_{[L^2(\Omega_{\eps})]^9},\\
 \big\| \mathcal{U}_\alpha \|_{L^2(0, L)} & \leq \frac{CL^2}{\eps^2} \| (\nabla u)_{\mathcal{S}} \|_{[L^2(\Omega_{\eps})]^9},  \quad\hbox{for }\alpha\in\{1,2\},\\
 \big\| \mathcal{U}_3 \|_{L^2(0, L)}&\leq \frac{CL}{\eps} \| (\nabla u)_{\mathcal{S}} \|_{[L^2(\Omega_{\eps})]^9},
\end{aligned}
\right.
\end{equation}
 The constants are independent of $\eps$ and $L$.
\end{lemma}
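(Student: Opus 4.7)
The plan is to extract the sharper information encoded in the clamping conditions $\mathcal{U}(0)=\mathcal{R}(0)=0$ from the a priori estimates of Theorem~\ref{a priori estimates}. The central tool I would establish first is the following weighted Hardy--Poincar\'e inequality: for every $f\in H^1(0,L;\R^3)$ with $f(0)=0$,
$$\|\rho_\eps f\|_{L^2(0,L;\R^3)}\leq \frac{4L}{3}\,\Big\|\rho_\eps^2 \frac{df}{dx_3}\Big\|_{L^2(0,L;\R^3)}.$$

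To prove this I would integrate by parts against the explicit antiderivative $\Phi(t)=\frac{L}{3(1-\eps/L)}\bigl(\rho_\eps(L)^3-\rho_\eps(t)^3\bigr)$, which satisfies $\Phi'=\rho_\eps^2$, $\Phi(L)=0$, and the pointwise bound $0\leq -\Phi(t)\leq \frac{L\rho_\eps(t)^3}{3(1-\eps/L)}\leq \frac{2L}{3}\rho_\eps(t)^3$ (using $\eps<L/2$). Both boundary terms vanish in the identity $\int_0^L \rho_\eps^2 |f|^2\, dx_3 = -2\int_0^L \Phi\, f\cdot f'\, dx_3$, and a Cauchy--Schwarz split packaging $\rho_\eps$ with $f$ and $-\Phi/\rho_\eps$ with $f'$ yields the claim after absorbing $\|\rho_\eps f\|_{L^2}$ into the left-hand side.

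Applying this inequality to $\mathcal{R}$ and inserting the third line of \eqref{estimates} produces the first estimate of \eqref{estimates1}. Next, since $\mathcal{R}\land\Ge_3=(\mathcal{R}_2,-\mathcal{R}_1,0)$, the third component of the second line of \eqref{estimates} gives directly the bound on $\|\rho_\eps\,d\mathcal{U}_3/dx_3\|_{L^2}$; for $\alpha\in\{1,2\}$, writing $d\mathcal{U}_\alpha/dx_3$ as the sum of the $\alpha$-th component of $d\mathcal{U}/dx_3-\mathcal{R}\land\Ge_3$ and of $\pm\mathcal{R}_{3-\alpha}$, then applying the triangle inequality together with the already-proved bound on $\|\rho_\eps\mathcal{R}\|_{L^2}$, yields the second estimate of \eqref{estimates1} (the $C/\eps$ contribution is absorbed into $CL/\eps^2$ since $\eps<L/2$).

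Finally, for the $L^2$-bounds on $\mathcal{U}_i$ themselves, I would use the simpler integration by parts with the linear weight $\psi(t)=t-L$: since $\mathcal{U}_i(0)=0$ and $\psi(L)=0$,
$$\|\mathcal{U}_i\|_{L^2}^2 = 2\int_0^L(L-t)\,\mathcal{U}_i\,\frac{d\mathcal{U}_i}{dx_3}\,dt\leq 2\,\Big\|\frac{L-t}{\rho_\eps}\,\mathcal{U}_i\Big\|_{L^2}\,\Big\|\rho_\eps\frac{d\mathcal{U}_i}{dx_3}\Big\|_{L^2}.$$
Writing $\rho_\eps(t)=(L-t+t\eps/L)/L$ gives the elementary pointwise bound $(L-t)/\rho_\eps(t)\leq L$, hence $\|\mathcal{U}_i\|_{L^2}\leq 2L\,\|\rho_\eps\,d\mathcal{U}_i/dx_3\|_{L^2}$, and the last two lines of \eqref{estimates1} follow from the derivative bounds just obtained. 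The main subtlety is identifying the correct antiderivative $\Phi$ so that the Cauchy--Schwarz split recovers exactly the $\rho_\eps^2$ weight on $f'$, rather than the logarithmically divergent weight produced by the naive fundamental-theorem-of-calculus approach.
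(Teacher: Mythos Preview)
Your proof is correct and follows essentially the same route as the paper: both derive the weighted Hardy--Poincar\'e inequality $\|\rho_\eps f\|_{L^2}\leq CL\|\rho_\eps^2 f'\|_{L^2}$ for $f(0)=0$ via an integration by parts that produces $\rho_\eps^3$ and then split with Cauchy--Schwarz. The only cosmetic difference is that the paper differentiates $\rho_\eps^3\mathcal{R}^2$ directly (using $-\rho_\eps'\geq 1/(2L)$ to extract the factor $L$), whereas you reach the same identity by introducing the explicit antiderivative $\Phi$ of $\rho_\eps^2$; the subsequent derivation of the $\mathcal{U}_\alpha$, $\mathcal{U}_3$ bounds from \eqref{estimates} and the final $L^2$ bounds on $\mathcal{U}_i$ are identical in spirit.
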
 

\begin{proof}
We begin with the proof of the first estimate in \eqref{estimates1}. Since $\mathcal{R}(0)=0$ by integration by parts we have
$$\int_0^L 2 \rho^3_\eps(x_3) \mathcal{R}(x_3) \frac{d \mathcal{R}}{d x_3}(x_3) \,dx_3 = - \int_0^L 3 \rho^2_\eps(x_3)\rho'_\eps(x_3) \mathcal{R}^2(x_3) \,dx_3 + \rho^3_\eps(L) \mathcal{R}^2(L).$$
Then taking into account the facts that $\ds {1\over 2 L}\leq -\rho'_\eps(x_3) = \frac{1}{L}\big(1-\frac{\eps}{L}\big)\leq {1\over L}$ ($\ds 0<\eps < \frac{L}{2}$) and $0\leq \rho^3_\eps(L) \mathcal{R}^2(L)$ we get

$$\int_0^L  \rho^2_\eps(x_3) \mathcal{R}^2(x_3) \,dx_3 \leq {2 L\over 3}\int_0^L  \rho^3_\eps(x_3) \mathcal{R}(x_3) \frac{d \mathcal{R}}{d x_3}(x_3) \,dx_3.$$
Hence, by the Cauchy's inequality it follows that:
$$\int_0^L  \rho^3_\eps(x_3) \mathcal{R}(x_3) \frac{d \mathcal{R}}{d x_3}(x_3) \,dx_3\leq   \| \rho_\eps\mathcal{R}  \big\|_{L^2((0, L); \R^3)}  \Big\| \rho_\eps^2 \frac{d\mathcal{R}}{d x_3}  \Big\|_{L^2((0, L); \R^3)}$$
Finally, the above inequalities together with the third estimate in (\ref{estimates}) allow us to obtain the required estimate
\begin{equation}\label{DU0}
\| \rho_\eps\mathcal{R}  \big\|_{L^2((0, L); \R^3)}  \leq {2L\over 3}\Big\| \rho_\eps^2 \frac{d\mathcal{R}}{d x_3}  \Big\|_{L^2((0, L); \R^3)}\leq \frac{CL}{\eps^2} \| (\nabla u)_{\mathcal{S}} \|_{[L^2(\Omega_{\eps})]^9}.
\end{equation}
 The constant is independent of $\eps$  and $L$.
 \vskip 1mm
The second estimate follows from \eqref{estimates}$_2$ and \eqref{DU0}:
\begin{equation}\label{DU1}
\Big\| \rho_\eps\, \frac{d \mathcal{U}}{d x_3} \Big\|_{L^2((0, L); \R^3)} \leq  \Big\| \rho_\eps\, \Big(\frac{d \mathcal{U}}{d x_3} - \mathcal{R} \land e_{3}\Big)  \Big\|_{L^2((0, L))} + \|\rho_\eps  (\mathcal{R} \land e_{3})\| _{L^2((0, L); \R^3)}  \leq \frac{CL}{\eps^2} \| (\nabla u)_{\mathcal{S}} \|_{[L^2(\Omega_{\eps})]^9}.
\end{equation}
From the second estimate in \eqref{estimates} we obtain a better estimate for $\ds \Big\| \rho_\eps\, \frac{d \mathcal{U}_3}{d x_3} \Big\|_{L^2(0, L)}$
\begin{equation}\label{DU2}
\Big\| \rho_\eps\, \frac{d \mathcal{U}_3}{d x_3} \Big\|_{L^2(0, L)} =  \Big\| \rho_\eps\, \Big(\frac{d \mathcal{U}}{d x_3} - \mathcal{R} \land \Ge_{3}\Big) \cdot \Ge_{3} \Big\|_{L^2((0, L); \R^3)} \leq \frac{C}{\eps} \| (\nabla u)_{\mathcal{S}} \|_{[L^2(\Omega_{\eps})]^9}.
\end{equation}

Finally, the estimates for $\mathcal{U}$ follows by a similar computation to  $\mathcal{R}$. We first prove
$$\| \mathcal{U}^\eps_i \|_{L^2(0,L)} \leq  2L\Big\| \rho_\eps\, \frac{d \mathcal{U}^\eps_i}{d x_3} \Big\|_{L^2(0, L)}\qquad \textrm{ for } i =1,2,3$$
then due to \eqref{estimates1}$_2$-\eqref{estimates1}$_3$ we get \eqref{estimates1}$_4$-\eqref{estimates1}$_5$.
%
\end{proof}

In view of the definition of the elementary displacement \eqref{Decomp} we can write explicitly the components of the displacement, the gradient and the symmetric gradient of the displacement

\begin{equation}\label{comp disp}
\left\{
\begin{aligned}
u_1(x) &= \mathcal{U}_1(x_3) - x_2 \mathcal{R}_3(x_3) + \bar{u}_1(x),\\
u_2(x) &= \mathcal{U}_2(x_3) + x_1 \mathcal{R}_3(x_3) + \bar{u}_2(x),\\
u_3(x) &= \mathcal{U}_3(x_3) - x_1 \mathcal{R}_2(x_3) +x_2 \mathcal{R}_1(x_3)+ \bar{u}_3(x).
\end{aligned}
\right. 
\end{equation}

\begin{remark}
Notice that, due to the definition of $\mathcal{U}$, $\mathcal{R}$ and \eqref{integral} we know that the warping satisfies 
\begin{equation}\label{warping cond}
\int_{\omega_{\eps,x_3}} \bar{u}_i \, dx_1 dx_2 = \int_{\omega_{\eps,x_3}} (x_1\bar{u}_2-x_2\bar{u}_1) \, dx_1 dx_2=\int_{\omega_{\eps,x_3}} x_\alpha\bar{u}_3 \, dx_1 dx_2=0, \quad i\in\{1, 2, 3\},\;\; \alpha\in\{1,2\}. 
\end{equation}
\end{remark}

\begin{equation} \label{comp grad}
\nabla u=  \left(\begin{array}{ccc} \ds \frac{\partial\bar{u}_1}{\partial x_1} & \ds - \mathcal{R}_3 + \frac{\partial\bar{u}_1}{dx_2} & \ds \frac{d\mathcal{U}_1}{dx_3} - x_2\frac{d \mathcal{R}_3}{dx_3}+\frac{\partial\bar{u}_1}{\partial x_3} \\
[6mm] \ds  \mathcal{R}_3 + \frac{\partial\bar{u}_2}{\partial x_1} & \ds \frac{\partial\bar{u}_2}{\partial x_2} & \ds \frac{d\mathcal{U}_2}{dx_3} + x_1\frac{d \mathcal{R}_3}{dx_3}+\frac{\partial\bar{u}_2}{\partial x_3} \\
[6mm] \ds -\mathcal{R}_2 + \frac{\partial\bar{u}_3}{\partial x_1} & \ds \mathcal{R}_1 + \frac{\partial\bar{u}_3}{\partial x_2} & \ds \frac{d\mathcal{U}_3}{dx_3} - x_1\frac{d \mathcal{R}_2}{dx_3} + x_2\frac{d \mathcal{R}_1}{dx_3} +\frac{\partial\bar{u}_3}{\partial x_3}\end{array}\right)
\end{equation}

\medskip

\begin{equation} \label{sym grad}
(\nabla u)_{\mathcal{S}}=  \left(\begin{array}{ccc} \ds \frac{\partial\bar{u}_1}{\partial x_1} & \ds  \frac{1}{2} \Big(\frac{\partial\bar{u}_1}{\partial x_2} +\frac{\partial\bar{u}_2}{\partial x_1}\Big)  & \ds \frac{1}{2}\Big(\frac{d\mathcal{U}_1}{dx_3} - x_2\frac{d \mathcal{R}_3}{dx_3}-\mathcal{R}_2 + \frac{\partial\bar{u}_3}{\partial x_1} +\frac{\partial\bar{u}_1}{\partial x_3}\Big) \\
[6mm]  * & \ds \frac{\partial\bar{u}_2}{\partial x_2} & \ds \frac{1}{2}\Big(\frac{d\mathcal{U}_2}{dx_3} + x_1\frac{d \mathcal{R}_3}{dx_3} + \mathcal{R}_1 + \frac{\partial\bar{u}_3}{\partial x_2} +\frac{\partial\bar{u}_2}{\partial x_3}\Big) \\[6mm] * & *  & \ds \frac{d\mathcal{U}_3}{dx_3} - x_1\frac{d \mathcal{R}_2}{dx_3}+x_2\frac{d \mathcal{R}_1}{dx_3} + \frac{\partial\bar{u}_3}{\partial x_3}\end{array}\right)
\end{equation}

The previous Lemma \ref{Lem31} allows us to established the Korn's inequality for any displacement $u \in H^1_{\Gamma_{\eps,0}}(\Omega_\eps; \R^3).$

\begin{lemma}

Assuming   the rod clamped at the bottom boundary, then we have
\begin{equation}\label{Korn inequality}
\left\{
\begin{aligned}
\| \nabla u \|_{[L^2(\Omega_{\eps}; \R^3)]^9} &\leq C\frac{L}{\eps} \| (\nabla u)_{\mathcal{S}} \|_{[L^2(\Omega_{\eps})]^9},\\
\Big\| \frac{u_\alpha}{\rho_\eps} \Big\|_{L^2(\Omega_{\eps})}  & \leq C \frac{L^2}{\eps}  \| (\nabla u)_{\mathcal{S}} \|_{[L^2(\Omega_{\eps})]^9},\quad\hbox{for } \alpha\in\{1,2\},\\
\Big\| \frac{u_3}{\rho_\eps} \Big\|_{L^2(\Omega_{\eps})} &\leq  C L \| (\nabla u)_{\mathcal{S}} \|_{[L^2(\Omega_{\eps})]^9},
\end{aligned}
\right. 
\end{equation}
The constant does not depend on $\eps$ and $L$.
\end{lemma}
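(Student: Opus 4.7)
The plan is to start from the explicit decomposition \eqref{comp disp}, write every component of $u$ and of $\nabla u$ as the sum of the elementary terms (involving only $\mathcal{U}$, $\mathcal{R}$ and $x_3$) and of a warping term, and then bound each piece separately using the estimates of Theorem \ref{a priori estimates} and Lemma \ref{Lem31}. The only non-routine ingredient is to convert the $L^2(0,L)$ bounds on the one-dimensional fields into $L^2(\Omega_\eps)$ bounds: integrating a function of $x_3$ alone over $\Omega_\eps$ gains a factor $|\omega|\eps^2\rho_\eps(x_3)^2$, while integrating $x_\alpha^2$ times such a function gains $\eps^4\rho_\eps(x_3)^4 I_\alpha$, by \eqref{integral}. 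This will let us identify which contributions dominate.

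For the first inequality, I read off from \eqref{comp grad} that the entries of $\nabla u$ are of three types: (i) derivatives of the warping, (ii) the scalars $\mathcal{R}_1,\mathcal{R}_2,\mathcal{R}_3,\frac{d\mathcal{U}_i}{dx_3}$, viewed as functions of $x_3$, and (iii) the mixed terms $x_\alpha\frac{d\mathcal{R}_\beta}{dx_3}$. Type (i) is already controlled by \eqref{estimates}$_4$. For type (ii), I use the cross-section integration rule above: for instance $\|\mathcal{R}_\alpha\|^2_{L^2(\Omega_\eps)}=|\omega|\eps^2\|\rho_\eps\mathcal{R}_\alpha/\rho_\eps\|^2_{L^2(0,L)}\cdot$ (correcting weights), and combined with \eqref{estimates1}$_1$ and \eqref{estimates1}$_2$ this gives the bound $C(L/\eps)\|(\nabla u)_\mathcal{S}\|$; the $\mathcal{U}_3$-derivative gives the smaller bound $C\|(\nabla u)_\mathcal{S}\|$ by \eqref{estimates1}$_3$. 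For type (iii), the extra power of $\eps^2\rho_\eps^2$ coming from the cross-section integral combines with \eqref{estimates}$_3$ to give only a $C\|(\nabla u)_\mathcal{S}\|$ contribution. Summing up, the dominant terms are of type (ii) and produce exactly $C(L/\eps)\|(\nabla u)_\mathcal{S}\|$, proving \eqref{Korn inequality}$_1$.

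For the second and third inequalities, the same mechanism is applied to \eqref{comp disp}. Writing $u_\alpha/\rho_\eps=\mathcal{U}_\alpha/\rho_\eps\mp x_\beta\mathcal{R}_3/\rho_\eps+\bar u_\alpha/\rho_\eps$ and integrating over $\Omega_\eps$:
\begin{itemize}
\item $\|\mathcal{U}_\alpha/\rho_\eps\|^2_{L^2(\Omega_\eps)}=|\omega|\eps^2\|\mathcal{U}_\alpha\|^2_{L^2(0,L)}$, which by \eqref{estimates1}$_4$ yields $C(L^2/\eps)\|(\nabla u)_\mathcal{S}\|$;
\item $\|x_\beta\mathcal{R}_3/\rho_\eps\|^2_{L^2(\Omega_\eps)}\le C\eps^4\|\rho_\eps\mathcal{R}_3\|^2_{L^2(0,L)}$, so \eqref{estimates1}$_1$ gives the much smaller $CL\|(\nabla u)_\mathcal{S}\|$;
\item $\|\bar u_\alpha/\rho_\eps\|_{L^2(\Omega_\eps)}\le C\eps\|(\nabla u)_\mathcal{S}\|$ directly from \eqref{estimates}$_1$.
\end{itemize}
The first contribution dominates, giving \eqref{Korn inequality}$_2$. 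The same three-term decomposition for $u_3/\rho_\eps=\mathcal{U}_3/\rho_\eps-x_1\mathcal{R}_2/\rho_\eps+x_2\mathcal{R}_1/\rho_\eps+\bar u_3/\rho_\eps$ together with \eqref{estimates1}$_5$ and \eqref{estimates1}$_1$ produces three contributions all bounded by $CL\|(\nabla u)_\mathcal{S}\|$, yielding \eqref{Korn inequality}$_3$.

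The proof is essentially bookkeeping; the only thing to watch is the weighting by $\rho_\eps$. The main obstacle is conceptual rather than technical: one must check that, despite the anisotropic thickness $\eps\rho_\eps(x_3)$ varying between $\eps$ and $\eps^2/L$, the $\rho_\eps$ powers coming from the cross-section integration cancel exactly against the weighted norms appearing in \eqref{estimates1}, so that no extra factor in $\eps$ or $L$ appears compared to the uniform-thickness rod.
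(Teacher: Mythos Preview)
Your proof is correct and follows essentially the same route as the paper: decompose $u$ and $\nabla u$ via \eqref{comp disp}--\eqref{comp grad}, convert the one-dimensional norms on $\mathcal{U}$, $\mathcal{R}$ and their derivatives into $L^2(\Omega_\eps)$ norms by integrating over the cross-sections (picking up the factors $\eps^2\rho_\eps^2$ or $\eps^4\rho_\eps^4$), and then invoke \eqref{estimates} and \eqref{estimates1} to identify the dominant contributions. The paper's argument is organized slightly differently (it bounds $\|\nabla U_e\|$ as a whole rather than entry by entry) but the computations and the estimates used are identical.
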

\begin{proof}
Recall that any displacement $u\in H^1_{\Gamma_{\eps,0}}(\Omega_\eps; \R^3)$ can be written as $ u = U_e + \bar{u}$. Then we get
$$\| \nabla u \|_{[L^2(\Omega_{\eps}; \R^3)]^9}  \leq  \| \nabla U_e \|_{[L^2(\Omega_{\eps}; \R^3)]^9} +  \| \nabla \bar{u} \|_{[L^2(\Omega_{\eps}; \R^3)]^9} .$$ 
Using \eqref{comp grad}, \eqref{estimates} and \eqref{estimates1} one has the following estimate:
\begin{equation*}
\begin{aligned}
&\|  \nabla U_e \|_{[L^2(\Omega_{\eps}; \R^3)]^9}\leq  \Big\| \eps \rho_\eps \frac{d \mathcal{U}}{d x_3} \Big\|_{L^2((0, L); \R^3)} + \big\| \eps \rho_\eps\mathcal{R} \land (\Ge_1+\Ge_2) \big\|_{L^2((0, L); \R^3)}\\
&+\Big\| \frac{d\mathcal{R}}{d x_3} \land (x_1\Ge_1+x_2\Ge_2) \Big\|_{L^2(\Omega_{\eps}; \R^3)}
\leq \frac{CL}{\eps} \| (\nabla u)_{\mathcal{S}} \|_{[L^2(\Omega_{\eps})]^9} +  \frac{CL}{\eps} \| (\nabla u)_{\mathcal{S}} \|_{[L^2(\Omega_{\eps})]^9} +   C\| (\nabla u)_{\mathcal{S}} \|_{[L^2(\Omega_{\eps})]^9}\\
& \leq\frac{CL}{\eps} \| (\nabla u)_{\mathcal{S}} \|_{[L^2(\Omega_{\eps})]^9}.
\end{aligned}
\end{equation*} Recall that $\|  \nabla \bar{u} \|_{[L^2(\Omega_{\eps}; \R^3)]^9} \leq C \| (\nabla u)_{\mathcal{S}} \|_{[L^2(\Omega_{\eps}]^9}$. Consequently, we obtain the first estimate in \eqref{Korn inequality}.
\medskip

In view of \eqref{comp disp} and taking into account estimates \eqref{estimates} and \eqref{estimates1} we obtain
\begin{align*}
\Big\| \frac{u_{\alpha}}{\rho_\eps} \Big\|_{L^2(\Omega_{\eps}; \R^3)} &\leq  \| \eps\, \mathcal{U}_{\alpha} \|_{L^2(0, L)} +  \Big\|\frac{x_{3-\alpha} \mathcal{R}_3}{\rho_\eps}\Big\|_{L^2(\Omega_{\eps})} + \Big\| \frac{\bar{u}_\alpha}{\rho_\eps} \Big\|_{L^2(\Omega_{\eps})}\\
& \leq  \frac{CL^2}{\eps} \| (\nabla u)_{\mathcal{S}} \|_{[L^2(\Omega_{\eps})]^9} +  C L \| (\nabla u)_{\mathcal{S}} \|_{[L^2(\Omega_{\eps})]^9} +  C \eps \| (\nabla u)_{\mathcal{S}} \|_{[L^2(\Omega_{\eps})]^9}\\ 
&\leq  \frac{CL^2}{\eps} \| (\nabla u)_{\mathcal{S}} \|_{[L^2(\Omega_{\eps})]^9},  \quad \textrm{ for } \alpha=1,2.
\end{align*}
\begin{align*}
\Big\| \frac{u_{3}}{\rho_\eps} \Big\|_{L^2(\Omega_{\eps}; \R^3)} &\leq  \| \eps\, \mathcal{U}_{3} \|_{L^2(0, L)} + \Big\|\frac{x_{1} \mathcal{R}_2}{\rho_\eps}\Big\|_{L^2(\Omega_{\eps})} + \Big\|\frac{x_{2} \mathcal{R}_1}{\rho_\eps}\Big\|_{L^2(\Omega_{\eps})} + \Big\| \frac{\bar{u}_3}{\rho_\eps} \Big\|_{L^2(\Omega_{\eps})}\\
&\leq  CL \| (\nabla u)_{\mathcal{S}} \|_{[L^2(\Omega_{\eps}]^9},
\end{align*}
which ends the proof.
\end{proof}

\subsection{Rescaling of the rod}

In this paragraph we define an operator which changes the scale. It allows us to transform the rod $\Omega_\eps$ into a domain independent of $\eps.$

Set $\Omega = \omega \times (0, L)$, the reference beam. We rescale $\Omega_\eps$ using the following operator:
$$(\Pi_\eps \phi)(X_1, X_2, x_3) = \phi (\eps \rho_\eps(x_3) X_1, \eps \rho_\eps(x_3) X_2, x_3), \textrm{ for a.e.  } (X_1, X_2, x_3) \in \Omega,$$
defined for any function $\phi$ measurable on $\Omega_\eps$. 
\smallskip

Observe that, if $\phi \in L^2(\Omega_\eps)$ then $(\Pi_\eps \phi) \in L^2(\Omega)$ and we have
\begin{equation}\label{norms}
 \big\| \Pi_\eps \phi \big\|_{L^2(\Omega)} =  \frac{1}{\eps}\Big\| \frac{\phi}{\rho_\eps} \Big\|_{L^2(\Omega_\eps)}.
 \end{equation}
Therefore, taking into account this above relation, we get the estimate for the rescaled warping $\Pi_\eps \bar{u}$
\begin{equation}\label{resc warp}
\big\| \Pi_\eps \bar{u} \big\|_{L^2(\Omega; \R^3)} = \frac{1}{\eps}\Big\| \frac{\bar{u}}{\rho_\eps} \Big\|_{L^2(\Omega_\eps; \R^3)} \leq  C \| (\nabla u)_{\mathcal{S}} \|_{[L^2(\Omega_{\eps})]^9}.
\end{equation}
In order to obtain the estimates for the derivatives of the warping observe that for any  $\phi \in H^1(\Omega_\eps)$
\begin{align}\label{der resc}
  \frac{\partial(\Pi_\eps \phi)}{\partial X_\alpha} &= \eps \rho_\eps \Pi_\eps \Big(\frac{\partial \phi}{\partial x_\alpha} \Big), \textrm{ for } \alpha=1,2,\nonumber\\
 \frac{\partial(\Pi_\eps \phi)}{\partial x_3} &= \eps \rho'_\eps X_1 \Pi_\eps \Big(\frac{\partial \phi}{\partial x_1}\Big) +  \eps \rho'_\eps X_2 \Pi_\eps \Big(\frac{\partial \phi}{\partial x_2}\Big) + \Pi_\eps\Big( \frac{\partial \phi}{\partial x_3}\Big).
 \end{align}
We recall that $\ds ||\rho'_\eps||_{L^\infty(0,L)}\leq {1\over L}$, then from \eqref{estimates} and \eqref{norms} we get
\begin{align} 
  \Big\|\frac{\partial(\Pi_\eps \bar{u})}{\partial X_\alpha} \Big\|_{L^2(\Omega; \R^3)} &= \Big\| \frac{\partial \bar{u}}{\partial x_\alpha} \Big\|_{L^2(\Omega_{\eps}; \R^3)} \leq C \| (\nabla u)_{\mathcal{S}} \|_{[L^2(\Omega_{\eps})]^9},  \textrm{ for } \alpha=1,2,\label{resc derwarp}\\
 \Big\|\rho_\eps \frac{\partial(\Pi_\eps \bar{u})}{\partial x_3} \Big\|_{L^2(\Omega; \R^3)} & \leq {C\over L} \Big(\Big\| \frac{\partial \bar{u}}{\partial x_1} \Big\|_{L^2(\Omega_{\eps}; \R^3)}  + \Big\| \frac{\partial \bar{u}}{\partial x_2} \Big\|_{L^2(\Omega_{\eps}; \R^3)}  \Big) + \frac{1}{\eps}\Big\| \frac{\partial \bar{u}}{\partial x_3} \Big\|_{L^2(\Omega_{\eps}; \R^3)}\nonumber\\
 & \leq   \frac{C}{\eps} \| (\nabla u)_{\mathcal{S}} \|_{[L^2(\Omega_{\eps})]^9}\label{resc derwarp3}.
 \end{align}
 In the same way, all the estimates in the previous sections over $\Omega_\eps$ can be easily transposed over $\Omega.$
 
 \section{Asymptotic behavior of a sequence of displacements }
  Now we consider a sequence of admissible displacements $\{u^\eps\}_\eps$, where $u^\eps\in H^1_{\Gamma_{\eps,0}}(\Omega_\eps; \R^3)$, satisfying
$$
\|(\nabla u^\eps)_S\|_{[L^2(\Omega_{\eps})]^9}\le C\eps^2,
$$
the constant does not depend on $\eps$.
\smallskip

 We are interested to describe the behaviour of the sequence $\{u^\eps\}_\eps$ as $\eps \to 0.$ In the following proposition we introduce the weak limits of the fields of the displacement's decomposition in the rod. We denote
 by $ \ds \rho(x_3)= 1- \frac{x_3}{L}$, $x_3\in [0,L]$,  the strong limit  in $L^\infty(0,L)$ of $\rho_\eps$. Observe that
 \begin{equation}\label{rho}
 0\leq \rho(t) \leq \rho_\eps(t) \quad \textrm{ for } t \in [0,L].
 \end{equation}

First of all, we introduce certain weighted Lebesgue and Sobolev spaces defined in the interval $(0,L)$.
\begin{itemize}
\item $L^2_{\rho^k}(0,L)$, $k\in \N$, consists of locally summable functions $\varphi: (0, L) \to \R$ equipped with the following norm:
$$\| \varphi \|_{L^2_{\rho^k}(0,L)}= \Big(\int_{0}^{L} [\rho^k(t)\varphi(t)]^2 \, dt\Big)^{1/2}.$$
Obseve that, there exists a linear homeomorphism of $L^2(0,L)$ onto $L^2_{\rho^k}(0,L)$
$$T(\psi) = \frac{\psi}{\rho^k}, \quad \textrm{ for } \psi \in L^2(0,L).$$
Then $L^2_{\rho^k}(0,L)=\Bigl\{ \varphi\in L^2_{loc}(0,L)\;\; |\;\; \rho^k \varphi\in L^2(0,L)\Big\}$ endowed with the norm above is a Banach space.
\begin{remark}
Observe that if $\{\Phi_\eps\}_\eps$ is a sequence of functions belonging to $L^2(\Omega)$ and satisfying $\rho^k_\eps\Phi_\eps\rightharpoonup \Psi$ weakly in $L^2(\Omega)$ then $\Phi_\eps\rightharpoonup \Phi=\ds {\Psi\over \rho^k}$ weakly in $L^2_{\rho^k}(\Omega)$. Conversely, if $\{\Phi_\eps\}_\eps$ is a sequence of functions such that $\rho^k_\eps\Phi_\eps$ is uniformly bounded in $L^2(\Omega)$ and satisfies $\Phi_\eps\rightharpoonup \Phi$ weakly in $L^2_{\rho^k}(\Omega)$ then $\rho^k_\eps\Phi_\eps\rightharpoonup \rho^k\Phi$ weakly in $L^2(\Omega)$. Here $k$ belongs to $\N$.
\end{remark}
\item
We define the space $H^1_\rho(0,L)$ as follows:
$$H^1_\rho(0,L)= \Bigl\{ \varphi\in H^1_{loc}(0,L)\;\; |\;\; \rho \varphi' \in L^2(0,L), \varphi \in L^2(0, L) \textrm{ and } \varphi(0)=0\Big\},$$
endowed with the following norm:
$$\| \varphi \|_{H^1_\rho(0,L)}= \Big(\int_{0}^{L} [\rho(t)\varphi'(t)]^2 \, dt\Big)^{1/2}.$$
We use this norm since as in the proof of Lemma \ref{Lem31} we can easily obtain
\begin{equation}\label{Eq 42}
\| \varphi \|_{L^2{(0,L)}}  \leq 2L \| \rho \varphi' \|_{L^2{(0,L)}}, \textrm{ for } \varphi \in H^1_\rho(0,L). 
\end{equation}
Since $\rho^{-k}$, $k \in \mathbb{N}$, is locally integrable we can conclude that $H^1_\rho(0,L)$ is a Banach space, see \cite{K}.



\item Analogously, $H^1_{\rho^2}(0,L)$ and $H^2_{\rho^2}(0,L)$  are the Banach spaces which contain the functions $\varphi: (0, L) \to \R$ such that
$$H^1_{\rho^2}(0,L)= \Bigl\{ \varphi\in H^1_{loc}(0,L)\;\; |\;\; \rho^2 \varphi' \in L^2(0,L), \rho \varphi \in L^2(0, L) \textrm{ and } \varphi(0)=0\Big\},$$
$$H^2_{\rho^2}(0,L)= \Bigl\{ \varphi\in H^2_{loc}(0,L)\;\; |\;\; \rho^2 \varphi'' \in L^2(0,L), \rho \varphi' \in L^2(0, L), \varphi \in L^2(0, L) \textrm{ and } \varphi(0)=0\Big\}.$$
We define their norms to be 
$$\| \varphi \|_{H^1_{\rho^2}(0,L)}= \Big(\int_{0}^{L} [\rho^2(t)\varphi'(t)]^2 \, dt\Big)^{1/2}.$$
$$\| \varphi \|_{H^2_{\rho^2}(0,L)}= \Big(\int_{0}^{L} [\rho^2(t)\varphi''(t)]^2 \, dt\Big)^{1/2}.$$
We can easily prove that
\begin{equation}\label{Eq 43}
\begin{aligned}
& \|\rho\varphi\|_{L^2(0,L)}\le {2L\over 3}\| \varphi \|_{H^1_{\rho^2}(0,L)}\qquad \hbox{ for any }\; \varphi\in H^1_{\rho^2}(0,L),\\
 &\|\rho\varphi'\|_{L^2(0,L)}\le {2L\over 3}\| \varphi \|_{H^2_{\rho^2}(0,L)}\qquad \hbox{ for any }\; \varphi\in H^2_{\rho^2}(0,L),
\end{aligned}
\end{equation}
then \eqref{Eq 42} yields $\ds \|\varphi\|_{L^2(0,L)}\le {4L^2\over 3}\| \varphi \|_{H^2_{\rho^2}(0,L)}$ for any 
$\varphi\in H^2_{\rho^2}(0,L)$.
\end{itemize}

Similarly we define some weighted spaces in the fixed domain $\Omega$ 
$$L^2_\rho(\Omega) =\Big\{ \phi \in L^2_{loc}(\Omega) \;|\; \rho \phi \in L^2(\Omega) \Big\},$$
$$H^1_\rho(\Omega) =\Big\{ \phi \in H^1_{loc}(\Omega) \;|\; \rho \frac{\partial \phi}{\partial x_3} \in L^2(\Omega) \textrm{ and }  \frac{\partial \phi}{\partial X_1} ,\frac{\partial \phi}{\partial X_2},\phi \in L^2(\Omega)\Big\}.$$
  They are Banach spaces endowed with their respective norms
 $$\| \phi \|_{L^2_\rho(\Omega)} = \Big( \int_\Omega [\rho(x_3) \phi(x)]^2 \, dX_1dX_2dx_3\Big)^{1/2}.$$
$$\| \phi \|_{H^1_\rho(\Omega)} = \Big(\int_\Omega \Big\{\Big(\rho \frac{\partial \phi}{\partial x_3}\Big)^2 +\Big(\frac{\partial \phi}{\partial X_2}\Big)^2 +\Big(\frac{\partial \phi}{\partial X_1}\Big)^2 + \phi^2   \Big\}\, dX_1dX_2dx_3\Big)^{1/2}.$$

  \begin{proposition}
 Let $\{u^\eps\}_\eps$ be a sequence of displacements such that $u^\eps \in H^1_{\Gamma_{\eps,0}}(\Omega_\eps; \R^3)$ and
  \begin{equation}\label{assump}
  \|(\nabla u^\eps)_S\|_{[L^2(\Omega_{\eps})]^9}\le C\eps^2,
  \end{equation}
 where the constant $C$ is independent of $\eps$. Then for a subsequence, still denoted by $\{\eps\}$,
 \begin{itemize}
 \item there exist $\mathcal{U} \in [H^1_\rho(0,L)]^3$, $\mathcal{R} \in [H^1_{\rho^2}(0,L)]^3$ and $ \mathcal{Z} \in L^2_\rho((0,L);\R^3)$ such that,  \begin{align} 
& \mathcal{U}^\eps_{\alpha} \rightharpoonup \mathcal{U}_\alpha \textrm{ weakly in } H^1_\rho(0,L), \textrm{ for } \alpha =1,2,\label{weak1}\\
& \frac{1}{\eps} \mathcal{U}^\eps_{3} \rightharpoonup \mathcal{U}_3 \textrm{ weakly in } H^1_\rho(0,L),\label{weak2}\\
&\mathcal{R}^\eps \rightharpoonup \mathcal{R} \textrm{ weakly in } [H^1_{\rho^2}(0,L)]^3,\label{weak3}\\
& \frac{1}{\eps}\Big({d \mathcal{U}^\eps\over dx_3}- \mathcal{R}^\eps\land\Ge_3\Big)\rightharpoonup \mathcal{Z} \textrm{ weakly in } L^2_\rho((0,L);\R^3),\label{weak30}\\
&\mathcal{R}(0)=0,\;\;\mathcal{U}_\alpha(0)=0,\;\; \mathcal{U}_3(0)=0.\label{weak31}
\end{align}
  
\item there exist $\bar{u} \in L^2\big((0, L); H^1(\omega;\R^3)\big)$, $u \in [H^1_\rho(\Omega)]^3$ and $\mathcal{K} \in  H^1_\rho(\Omega;\R^3)$ such that
\begin{align}
\frac{1}{\eps^2} \Pi_\eps( \bar{u}^\eps) &\rightharpoonup \bar{u}  \textrm{ weakly in } L^2\big((0, L); H^1(\omega;\R^3)\big)\label{weak4}\\
\Pi_\eps (u^\eps_\alpha)  &\rightharpoonup u_\alpha \textrm{ weakly in } H^1_\rho(\Omega),\label{weak5}\\
\frac{1}{\eps}\Pi_\eps (u^\eps_3)  &\rightharpoonup u_3 \textrm{ weakly in } H^1_\rho(\Omega),\label{weak6}\\
\frac{1}{\eps}\Pi_\eps (u^\eps-\mathcal{U}^\eps)  &\rightharpoonup \mathcal{K} \textrm{ weakly in } H^1_\rho(\Omega;\R^3).\label{weak7} 
\end{align} 
  Moreover, we have the following relations between the limit fields:
\begin{equation}\label{relation}
\frac{d \mathcal{U}_1}{d x_3} = \mathcal{R}_2, \quad \frac{d \mathcal{U}_2}{d x_3} = - \mathcal{R}_1,
\end{equation}
$$u_1(X_1, X_2, x_3) = \mathcal{U}_1(x_3), \textrm{ for a. e.  } (X_1, X_2, x_3) \in \Omega,$$
$$u_2(X_1, X_2, x_3) = \mathcal{U}_2(x_3), \textrm{ for a. e. } (X_1, X_2, x_3) \in \Omega,$$
\begin{equation}\label{u3}
u_3(X_1, X_2, x_3) = \mathcal{U}_{3}(x_3) - \rho(x_3) X_1{d\mathcal{U}_1\over dx_3}(x_3)- \rho(x_3) X_2{d\mathcal{U}_2\over dx_3}(x_3), \textrm{ for a. e.  } (X_1, X_2, x_3) \in \Omega.
\end{equation}
$$\mathcal{K}_1(X_1, X_2, x_3) = -\rho(x_3)X_2\mathcal{R}_3(x_3) \textrm{ for a.e.  } (X_1, X_2, x_3) \in \Omega.$$
$$\mathcal{K}_2(X_1, X_2, x_3) =  \rho(x_3)X_1\mathcal{R}_3(x_3) \textrm{ for a.e.  } (X_1, X_2, x_3) \in \Omega.$$
$$\mathcal{K}_3(X_1, X_2, x_3) = -\rho(x_3) X_1\frac{d \mathcal{U}_1}{d x_3}(x_3)-\rho(x_3) X_2\frac{d \mathcal{U}_2}{d x_3}(x_3)\textrm{ for a.e. } (X_1, X_2, x_3) \in \Omega.$$

  \end{itemize}
 
 \end{proposition}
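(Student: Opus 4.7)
The plan is to combine the a priori bounds from Theorem \ref{a priori estimates} and Lemma \ref{Lem31} with the rescaling formulas \eqref{norms}--\eqref{resc derwarp3}, extract weak limits in the appropriate weighted Sobolev spaces, and identify them by passing to the limit in the structural decomposition $u^\eps=U^\eps_e+\bar u^\eps$ given in \eqref{comp disp}. The hypothesis $\|(\nabla u^\eps)_S\|_{[L^2(\Omega_\eps)]^9}\le C\eps^2$ scales every estimate in the paper to be of order $1$ or $\eps$, which is precisely the balance needed so that the weights $\rho_\eps$ can be traded for their limit $\rho$ via $\|\rho_\eps-\rho\|_{L^\infty(0,L)}\le\eps/L$.

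First I would plug the hypothesis into \eqref{estimates} and \eqref{estimates1} to obtain uniform bounds of $\|\rho_\eps\mathcal R^\eps\|_{L^2}$ and $\|\rho^2_\eps (d\mathcal R^\eps/dx_3)\|_{L^2}$, of $\|\rho_\eps(d\mathcal U^\eps_\alpha/dx_3)\|_{L^2}$ and $\|\mathcal U^\eps_\alpha\|_{L^2}$, of $\|\rho_\eps(d\mathcal U^\eps_3/dx_3)/\eps\|_{L^2}$ and $\|\mathcal U^\eps_3/\eps\|_{L^2}$, together with $\|\rho_\eps\bigl(d\mathcal U^\eps/dx_3-\mathcal R^\eps\land\Ge_3\bigr)/\eps\|_{L^2}\le C$. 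Since $\mathcal R^\eps(0)=\mathcal U^\eps(0)=0$ the corresponding sequences lie in $H^1_\rho(0,L)$ or $H^1_{\rho^2}(0,L)$ (recall $\rho(0)=1$, so the trace at $0$ is well defined and passes to weak limits); this yields \eqref{weak1}--\eqref{weak31} by extracting subsequences. The Remark in Section 4 then converts these convergences into the ordinary weak $L^2$ convergences $\rho_\eps\mathcal R^\eps\rightharpoonup\rho\mathcal R$ and $\rho_\eps(d\mathcal U^\eps_\alpha/dx_3)\rightharpoonup\rho(d\mathcal U_\alpha/dx_3)$, which will be used repeatedly. The relations \eqref{relation} then drop out by observing that $\rho_\eps\bigl(d\mathcal U^\eps/dx_3-\mathcal R^\eps\land\Ge_3\bigr)$ tends to zero strongly in $L^2$ (after multiplying the bound by $\eps$) and passing to the weak limit in each component, using $\rho>0$ on $(0,L)$.

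For the rescaled fields, \eqref{resc warp}--\eqref{resc derwarp3} show that $\eps^{-2}\Pi_\eps\bar u^\eps$ is bounded in $L^2((0,L);H^1(\omega;\R^3))$, giving \eqref{weak4}. Applying $\Pi_\eps$ to \eqref{comp disp} produces
\begin{align*}
\Pi_\eps u^\eps_1&=\mathcal U^\eps_1-\eps\rho_\eps X_2\mathcal R^\eps_3+\Pi_\eps\bar u^\eps_1,\\
\Pi_\eps u^\eps_2&=\mathcal U^\eps_2+\eps\rho_\eps X_1\mathcal R^\eps_3+\Pi_\eps\bar u^\eps_2,\\
\Pi_\eps u^\eps_3&=\mathcal U^\eps_3-\eps\rho_\eps X_1\mathcal R^\eps_2+\eps\rho_\eps X_2\mathcal R^\eps_1+\Pi_\eps\bar u^\eps_3.
\end{align*}
In the first two identities the warping term and the $\eps\rho_\eps\mathcal R^\eps_3$ term vanish in $L^2(\Omega)$, while $\mathcal U^\eps_\alpha\rightharpoonup\mathcal U_\alpha$ in $H^1_\rho$, yielding \eqref{weak5} with $u_\alpha=\mathcal U_\alpha$. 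Dividing the third by $\eps$ and using $\rho_\eps\mathcal R^\eps\rightharpoonup\rho\mathcal R$ in $L^2$ together with $\eps^{-1}\Pi_\eps\bar u^\eps_3\to 0$ in $L^2$ gives \eqref{weak6} with $u_3=\mathcal U_3-\rho X_1\mathcal R_2+\rho X_2\mathcal R_1$; substituting \eqref{relation} produces exactly \eqref{u3}. Uniform boundedness in $H^1_\rho(\Omega)$, required for the weak convergence to take place there, follows from the rescaled gradient estimates \eqref{resc derwarp}, \eqref{resc derwarp3} applied to $U^\eps_e$ and $\bar u^\eps$.

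The identification of $\mathcal K$ uses the decomposition $u^\eps-\mathcal U^\eps=\mathcal R^\eps\land(x_1\Ge_1+x_2\Ge_2)+\bar u^\eps$; after rescaling and dividing by $\eps$,
\begin{equation*}
\frac{1}{\eps}\Pi_\eps(u^\eps-\mathcal U^\eps)=\rho_\eps\mathcal R^\eps\land(X_1\Ge_1+X_2\Ge_2)+\frac{1}{\eps}\Pi_\eps\bar u^\eps.
\end{equation*}
The warping term vanishes in $L^2$ since $\|\Pi_\eps\bar u^\eps\|_{L^2}\le C\eps^2$, and $H^1_\rho$-boundedness is controlled by \eqref{resc derwarp}--\eqref{resc derwarp3}. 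Expanding the cross product and using $\rho_\eps\mathcal R^\eps\rightharpoonup\rho\mathcal R$ in $L^2$ identifies $\mathcal K_1=-\rho X_2\mathcal R_3$, $\mathcal K_2=\rho X_1\mathcal R_3$, and $\mathcal K_3=\rho X_2\mathcal R_1-\rho X_1\mathcal R_2$; substituting \eqref{relation} into $\mathcal K_3$ gives the announced formula. The main technical obstacle is the systematic conversion between $\rho_\eps$-weighted a priori bounds and $\rho$-weighted limit convergences; this is handled by the uniform estimate $\|\rho_\eps-\rho\|_{L^\infty}\le\eps/L$ (so that $(\rho_\eps-\rho)\Phi^\eps\to 0$ in $L^2$ whenever $\rho_\eps\Phi^\eps$ is $L^2$-bounded) together with the correspondence stated in the Remark of Section 4 between weak convergence in $L^2_{\rho^k}$ and in $L^2$ against the weight $\rho^k$.
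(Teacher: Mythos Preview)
Your proposal is correct and follows essentially the same route as the paper: bound the decomposition fields via Theorem \ref{a priori estimates} and Lemma \ref{Lem31}, convert $\rho_\eps$--weighted bounds into $\rho$--weighted ones, extract weak limits in the weighted spaces, and identify them by rescaling the structural decomposition \eqref{comp disp}. The only cosmetic differences are that the paper obtains the $H^1_\rho(\Omega)$ bounds for $\Pi_\eps u^\eps$ directly from the Korn-type estimates \eqref{Korn inequality} rather than through the decomposition, and replaces $\rho_\eps$ by $\rho$ via the one-sided inequality $\rho\le\rho_\eps$ of \eqref{rho} rather than the uniform estimate $\|\rho_\eps-\rho\|_{L^\infty}\le\eps/L$; both choices are equivalent for the present purpose.
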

 
 \begin{proof} 
 \medskip
 
 First we get the weak limits, up to a subsequence still denoted by $\eps$, of the different fields. Then we derive a few relations between some of them.
 \medskip
 
\noindent{\it Step 1.} The convergences.
\smallskip

\noindent Taking into account \eqref{rho}-\eqref{Eq 42} and \eqref{estimates1}$_2$-\eqref{estimates1}$_3$ we have
$$\| \mathcal{U}^\eps_\alpha \|_{H^1_\rho(0,L)} \leq  C,\qquad \| \mathcal{U}^\eps_3 \|_{H^1_\rho(0,L)} \leq  C\eps,\qquad  \textrm{ for } \alpha=1,2.$$
Then  we obtain the following convergences:
$$ \mathcal{U}^\eps_{\alpha} \rightharpoonup \mathcal{U}_\alpha \textrm{ weakly in } H^1_\rho(0,L), \textrm{ for } \alpha =1,2.$$
$$\frac{1}{\eps} \mathcal{U}^\eps_{3} \rightharpoonup \mathcal{U}_3 \textrm{ weakly in } H^1_\rho(0,L).$$
According to \eqref{rho}  we get 
$$\| \mathcal{R}^\eps_i \|_{H^1_{\rho^2}(0,L)} \leq  \Big\| \rho^2_\eps\, \frac{d \mathcal{R}_i}{d x_3} \Big\|_{L^2(0, L)} \textrm{ for } i =1,2,3.$$
Due to estimates \eqref{estimates}$_3$ and  \eqref{Eq 43} we obtain 
$$\mathcal{R}^\eps \rightharpoonup \mathcal{R} \textrm{ weakly in } [H^1_{\rho^2}(0,L)]^3.$$
Again due to \eqref{rho} we have
$$\Big\| {d \mathcal{U}^\eps\over dx_3}- \mathcal{R}^\eps\land\Ge_3 \Big\|_{L^2_{\rho}((0,L);\R^3)} \leq \Big\| \rho_\eps\, \Big(\frac{d \mathcal{U}}{d x_3} - \mathcal{R} \land e_{3}\Big)  \Big\|_{L^2((0, L); \R^3)}.$$
In view of estimate \eqref{estimates}$_2$ we get 
$$\frac{1}{\eps}\Big({d \mathcal{U}^\eps\over dx_3}- \mathcal{R}^\eps\land\Ge_3\Big)\rightharpoonup \mathcal{Z} \textrm{ weakly in } L^2_\rho((0,L);\R^3).$$
Thanks to the estimates  \eqref{resc warp} and \eqref{resc derwarp} the sequence $\ds \frac{1}{\eps^2} \Pi_\eps( \bar{u}^\eps)$ is bounded in $L^2\big((0, L); H^1(\omega;\R^3)\big)$. Then we obtain
$$\frac{1}{\eps^2} \Pi_\eps( \bar{u}^\eps) \rightharpoonup \bar{u}  \textrm{ weakly in } L^2\big((0, L); H^1(\omega;\R^3)\big).$$
From property \eqref{norms} of the rescaling operator   and the estimate \eqref{Korn inequality}$_2$ we have
\begin{equation}\label{resc disp}
\big\| \Pi_\eps u^\eps_\alpha \big\|_{L^2(\Omega)} =  \frac{1}{\eps}\Big\| \frac{u^\eps_\alpha}{\rho_\eps} \Big\|_{L^2(\Omega_\eps)} \leq C\frac{L}{\eps^2} \| (\nabla u)_{\mathcal{S}} \|_{[L^2(\Omega_{\eps})]^9}, \quad \hbox{for } \alpha=1,2.
\end{equation}
Moreover, taking into account the derivation rule \eqref{der resc} and the estimates \eqref{Korn inequality}$_1$ we have
\begin{align}
  \Big\|\frac{\partial(\Pi_\eps u^\eps_\alpha)}{\partial X_\beta} \Big\|_{L^2(\Omega)} &= \Big\| \frac{\partial u^\eps_\alpha}{\partial x_\beta} \Big\|_{L^2(\Omega_{\eps})} \leq \frac{C}{\eps} \| (\nabla u^\eps)_{\mathcal{S}} \|_{[L^2(\Omega_{\eps})]^9},  \textrm{ for } \alpha, \beta=1,2,\label{resc deru}\\
    \Big\|\rho \frac{\partial(\Pi_\eps u^\eps_\alpha)}{\partial x_3} \Big\|_{L^2(\Omega)} \leq  \Big\|\rho_\eps \frac{\partial(\Pi_\eps u^\eps_\alpha)}{\partial x_3} \Big\|_{L^2(\Omega)} & \leq {C\over L} \Big(\Big\| \frac{\partial u^\eps_\alpha}{\partial x_1} \Big\|_{L^2(\Omega_{\eps})}  + \Big\| \frac{\partial u^\eps_\alpha}{\partial x_2} \Big\|_{L^2(\Omega_{\eps})}  \Big) + \frac{1}{\eps}\Big\| \frac{\partial u^\eps_\alpha}{\partial x_3} \Big\|_{L^2(\Omega_{\eps})} \nonumber\\
   & \leq \frac{C}{\eps^2} \| (\nabla u^\eps)_{\mathcal{S}} \|_{[L^2(\Omega_{\eps})]^9}\label{resc deru3},\quad \hbox{for } \alpha=1,2.
\end{align}
Therefore, from \eqref{resc disp}, \eqref{resc deru} and  \eqref{resc deru3} we get
$$\Pi_\eps (u^\eps_\alpha)  \rightharpoonup u_\alpha \textrm{ weakly in } H^1_\rho(\Omega),\quad \hbox{for } \alpha=1,2.$$

In the same way, from  \eqref{Korn inequality}$_1$, \eqref{Korn inequality}$_3$ and \eqref{der resc} we obtain
\begin{align}
\big\| \Pi_\eps u^\eps_3 \big\|_{L^2(\Omega)} & \leq C\frac{L}{\eps} \| (\nabla u)_{\mathcal{S}} \|_{[L^2(\Omega_{\eps})]^9},\label{resc u3}\\
  \Big\|\frac{\partial(\Pi_\eps u^\eps_3)}{\partial X_\beta} \Big\|_{L^2(\Omega)} &\leq \frac{C}{\eps} \| (\nabla u)_{\mathcal{S}} \|_{[L^2(\Omega_{\eps})]^9},  \textrm{ for } \beta=1,2,\label{resc du3}\\
   \Big\|\rho\frac{\partial(\Pi_\eps u^\eps_3)}{\partial x_3} \Big\|_{L^2(\Omega)} &  \leq \frac{C}{\eps} \| (\nabla u)_{\mathcal{S}} \|_{[L^2(\Omega_{\eps})]^9}\label{resc d3u3}.
\end{align}
Hence, we get 
$$\frac{1}{\eps}\Pi_\eps (u^\eps_3) \rightharpoonup u_3 \textrm{ weakly in } H^1_\rho(\Omega).$$

From the definition of the elementary displacement we have
$$u^\eps(x)-\mathcal{U}^\eps(x_3) =  \mathcal{R}^\eps(x_3) \land (x_1{\Ge}_1+x_2{\Ge}_2) + \bar{u}^\eps(x).$$
Hence, in view of \eqref{estimates1}$_1$, \eqref{resc warp}, the property \eqref{norms} of the rescaling operator and the assumption \eqref{assump} we obtain the following estimate:
\begin{equation}\label{estimate dif}
\frac{1}{\eps}\big\| \Pi_\eps (u^\eps - \mathcal{U}^\eps) \big\|_{L^2(\Omega; \R^3)} \leq \frac{1}{\eps^2}\Big\| \frac{\mathcal{R}^\eps(x_3) \land (x_1{\Ge}_1+x_2{\Ge}_2)}{\rho_\eps} \Big\|_{L^2(\Omega_\eps, \R^3)} + \frac{1}{\eps}\big\| \Pi_\eps \bar{u} \big\|_{L^2(\Omega; \R^3)} \leq C.
\end{equation}
Now using the rule of the derivation \eqref{der resc} and \eqref{norms} we have for $\alpha=1,2$
$$\frac{1}{\eps}\Big\| \frac{\partial\Pi_\eps (u^\eps - \mathcal{U}^\eps)}{\partial X_\alpha} \Big\|_{L^2(\Omega; \R^3)}  = \frac{1}{\eps}\Big\| \frac{\partial(u^\eps - \mathcal{U}^\eps)}{\partial x_\alpha} \Big\|_{L^2(\Omega_\eps; \R^3)}
\leq  \frac{1}{\eps}\Big\| \mathcal{R}^\eps(x_3) \land {\Ge}_\alpha \Big\|_{L^2(\Omega_\eps; \R^3)} + \frac{1}{\eps}\Big\| \frac{\partial \bar{u}}{\partial x_\alpha} \Big\|_{L^2(\Omega_{\eps}; \R^3)}\leq C,$$

\begin{align*}
\frac{1}{\eps}\Big\| \rho \frac{\partial\Pi_\eps (u^\eps - \mathcal{U}^\eps)}{\partial x_3} \Big\|_{L^2(\Omega; \R^3)} & \leq \frac{C}{\eps} \Big(\sum_{\alpha=1}^2\Big\| \frac{\partial(u^\eps - \mathcal{U}^\eps)}{\partial x_\alpha} \Big\|_{L^2(\Omega_\eps; \R^3)} \Big) + \frac{1}{\eps^2}\Big\| \frac{\partial(u^\eps - \mathcal{U}^\eps)}{\partial x_3} \Big\|_{L^2(\Omega_\eps; \R^3)}\\
& \leq C + \frac{1}{\eps^2}\Big\| \frac{d\mathcal{R}^\eps}{d x_3} \land (x_1\Ge_1+x_2\Ge_2) \Big\|_{L^2(\Omega_{\eps}; \R^3)} + \frac{1}{\eps^2} \Big\| \frac{\partial \bar{u}^\eps}{\partial x_3} \Big\|_{L^2(\Omega_{\eps}; \R^3)}\leq C.
\end{align*}
Consequently, from the two above estimates  and \eqref{estimate dif} we get the last weak convergence
$$ \frac{1}{\eps}\Pi_\eps (u^\eps-\mathcal{U}^\eps)  \rightharpoonup \mathcal{K} \textrm{ weakly in } H^1_\rho(\Omega;\R^3).$$

\medskip
\noindent{\it Step 2.} Relations between the limit fields.
\smallskip

Now we are going to establish the relations between the weak limits. First consider \eqref{estimates}$_2$ which implies
$$  \Big(\frac{d \mathcal{U}^\eps}{d x_3} - \mathcal{R}^\eps \land \Ge_{3}\Big) \to 0 \textrm{ strongly in }  L^2_\rho((0, L); \R^3),$$
as $\eps$ tends to $0$. Then \eqref{weak1} and \eqref{weak3} give
\begin{equation}\label{relation1}
\frac{d \mathcal{U}_1}{d x_3} = \mathcal{R}_2, \quad \frac{d \mathcal{U}_2}{d x_3} = - \mathcal{R}_1.
\end{equation}
It follows that $\mathcal{U}_\alpha \in H^2_{\rho^2}(0, L),$ for $\alpha=1,2.$
\medskip

\noindent Now, from \eqref{comp disp} we can write
\begin{equation}\label{limit u1} 
(\Pi_\eps u^\eps_1) (X_1, X_2, x_3) =   \mathcal{U}^\eps_{1} (x_3) - \eps \rho_\eps X_2 \mathcal{R}^\eps_3(x_3) + (\Pi_\eps \bar{u}^\eps_1)(X_1, X_2, x_3), \textrm{ for  a.e. } (X_1, X_2, x_3) \in \Omega.
\end{equation}
In view of \eqref{weak1}, \eqref{weak3}, \eqref{weak4} and \eqref{weak5} by passing to the limit in \eqref{limit u1} we obtain 
$$u_1(X_1, X_2, x_3) = \mathcal{U}_1(x_3), \textrm{ for a.e. } (X_1, X_2, x_3) \in \Omega.$$
Repeating the above arguments for $(\Pi_\eps u^\eps_2)$ we conclude that
$$u_2(X_1, X_2, x_3) = \mathcal{U}_2(x_3), \textrm{ for a.e. } (X_1, X_2, x_3) \in \Omega.$$
Notice that $u_\alpha$ does not depend on the variables $(X_1, X_2)$, for $\alpha=1,2$.
\smallskip

From \eqref{comp disp} we have for a.e. $(X_1, X_2, x_3) \in \Omega$
$$\frac{1}{\eps}(\Pi_\eps u^\eps_3) (X_1, X_2, x_3)=  \frac{1}{\eps} \mathcal{U}^\eps_{3}(x_3) - \rho_\eps X_1\mathcal{R}^\eps_2(x_3) + \rho_\eps X_2\mathcal{R}^\eps_1(x_3) + \frac{1}{\eps}(\Pi_\eps \bar{u}^\eps_3)(X_1, X_2, x_3).$$
Now, using \eqref{weak2}, \eqref{weak3}, \eqref{weak4} and \eqref{weak6} we pass to the limit in the equality above and we get
\begin{equation}\label{relation2}
u_3(X_1, X_2, x_3) = \mathcal{U}_{3}(x_3) - \rho X_1\mathcal{R}_2(x_3) + \rho X_2\mathcal{R}_1(x_3) \textrm{ for  a.e. } (X_1, X_2, x_3) \in \Omega.
\end{equation}
Observe that, due to \eqref{relation1}, \eqref{relation2} can be written as 
\begin{equation}\label{relation3}
u_3(X_1, X_2, x_3) = \mathcal{U}_{3}(x_3) - \rho X_1\frac{d \mathcal{U}_1}{d x_3}(x_3) - \rho X_2\frac{d \mathcal{U}_2}{d x_3}(x_3), \textrm{ for a.e. } (X_1, X_2, x_3) \in \Omega.
\end{equation}

Now we turn to the identification of $\mathcal{K}_i.$ In view of \eqref{comp disp} we have
\begin{equation}\label{limit K1}
\frac{1}{\eps}\Pi_\eps (u^\eps_1-\mathcal{U}^\eps_1)= - \rho_\eps X_2 \mathcal{R}^\eps_3(x_3) + \frac{1}{\eps}(\Pi_\eps \bar{u}^\eps_1)(X_1, X_2, x_3), \textrm{ for  a.e. }\; (X_1, X_2, x_3) \in \Omega.
\end{equation}
From \eqref{weak3}, \eqref{weak4}, \eqref{weak7} by passing to the limit in \eqref{limit K1} we obtain 
$$\mathcal{K}_1(X_1, X_2, x_3) = - \rho(x_3)X_2\mathcal{R}_3(x_3) \textrm{ for a.e. } (X_1, X_2, x_3) \in \Omega.$$
Proceeding as above for $\ds \frac{1}{\eps}\Pi_\eps (u^\eps_2-\mathcal{U}^\eps_2)$ we get
$$\mathcal{K}_2(X_1, X_2, x_3) =  \rho(x_3)X_1\mathcal{R}_3(x_3) \textrm{ for a.e. } (X_1, X_2, x_3) \in \Omega.$$
Finally we obtain the expression for $\mathcal{K}_3$. From  \eqref{comp disp} we have
$$\frac{1}{\eps}\Pi_\eps (u^\eps_3-\mathcal{U}^\eps_3)= -\rho_\eps X_1\mathcal{R}^\eps_2(x_3) + \rho_\eps X_2\mathcal{R}^\eps_1(x_3) + \frac{1}{\eps}(\Pi_\eps \bar{u}^\eps_3)(X_1, X_2, x_3).$$
Convergences \eqref{weak3}, \eqref{weak4}, \eqref{weak7}  allow to pass to the limit and we get
$$\mathcal{K}_3(X_1, X_2, x_3) = -\rho X_1\mathcal{R}_2(x_3) + \rho X_2\mathcal{R}_1(x_3)\textrm{ for a. e.  } (X_1, X_2, x_3) \in \Omega.$$
Equivalently, from \eqref{relation1} we have
$$\mathcal{K}_3(X_1, X_2, x_3) =  - \rho X_1\frac{d \mathcal{U}_1}{d x_3}(x_3)  - \rho X_2\frac{d \mathcal{U}_2}{d x_3}(x_3)\textrm{ for a.e. } (X_1, X_2, x_3) \in \Omega.$$
\vskip-9mm
\end{proof}
 
 \begin{remark} It is worth to note that the limit displacement fields is a kind of Bernoulli-Navier displacement.
 \smallskip
 
\noindent  Also observe that the limit warping $\bar{u}$ verifies the following conditions:
 \begin{equation}\label{warping cond1}
 \int_{\omega} \bar{u}_i \, dX_1 dX_2 = \int_{\omega} (X_1\bar{u}_2-X_2\bar{u}_1) \, dX_1 dX_2=\int_{\omega} X_\alpha\bar{u}_3 \, dX_1 dX_2=0, \quad i\in\{1, 2, 3\},\;\; \alpha\in\{1,2\}. 
 \end{equation}
  \end{remark}

To conclude this section, we give the asymptotic behavior of the gradient and  the symmetric gradient.
We define the field $\tilde{u}_3\in L^2((0,L); H^1(\omega))$ by setting
$$\tilde{u}_3(X_1, X_2, x_3) = \bar{u}_3(X_1, X_2, x_3) +\rho(x_3) \mathcal{Z}_1(x_3) X_1 +\rho(x_3) \mathcal{Z}_2(x_3) X_2\quad \hbox{for a.e. } (X_1,X_2,x_3)\in\Omega.$$
\begin{lemma}\label{lem43} In view of \eqref{weak1}-\eqref{weak7}  we obtain
 \begin{equation} \label{weak grad}
\Pi_\eps (\nabla u^\eps)  \rightharpoonup Z \textrm{ weakly in } [L^2_\rho(\Omega)]^9, \qquad \frac{1}{\eps}\Pi_\eps\big( (\nabla u^\eps)_{\mathcal{S}}\big)  \rightharpoonup T \textrm{ weakly in } [L^2_\rho(\Omega)]^9,
\end{equation}
where
\begin{equation*}
Z = \left(\begin{array}{ccc} 0 & \ds - \mathcal{R}_3  & \ds \mathcal{R}_2\\
[6mm] \ds  \mathcal{R}_3  & 0 & \ds -\mathcal{R}_1 \\
[6mm] \ds - \mathcal{R}_2  & \ds \mathcal{R}_1&0\end{array}\right),
\quad T= \left(\begin{array}{ccc} \ds \frac{\partial\bar{u}_1}{\partial X_1} & \ds  \frac{1}{2} \Big(\frac{\partial\bar{u}_1}{\partial X_2} +\frac{\partial\bar{u}_2}{\partial X_1}\Big)  & \ds \frac{1}{2}\Big( - \rho^2X_2\frac{d \mathcal{R}_3}{dx_3} + \frac{\partial\tilde{u}_3}{\partial X_1}\Big) \\
[6mm]  * & \ds \frac{\partial\bar{u}_2}{dX_2} & \ds \frac{1}{2}\Big(\rho^2X_1\frac{d \mathcal{R}_3}{dx_3} + \frac{\partial\tilde{u}_3}{\partial X_2}\Big) \\[6mm] * & *  & \ds \rho \frac{d\mathcal{U}_3}{dx_3} - \rho^2X_1\frac{d \mathcal{R}_2}{dx_3} +\rho^2X_2\frac{d \mathcal{R}_1}{dx_3} \end{array}\right).
\end{equation*}
\end{lemma}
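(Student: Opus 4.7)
The plan is to use the explicit expressions \eqref{comp grad} and \eqref{sym grad} for $\nabla u$ and $(\nabla u)_\mathcal{S}$, apply the rescaling operator $\Pi_\eps$ entry by entry (using the derivation rule \eqref{der resc} whenever a derivative of the warping is encountered), and then identify the weak limit of each component by invoking the convergences \eqref{weak1}--\eqref{weak30}, \eqref{weak4} already established in the proposition. No new a priori estimate is needed: everything will follow from the ones summarised in \eqref{estimates}, \eqref{estimates1} combined with the assumption $\|(\nabla u^\eps)_\mathcal{S}\|_{L^2(\Omega_\eps)}\le C\eps^2$.

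For the gradient $Z$, I would examine \eqref{comp grad} term by term. The constant-in-$(X_1,X_2)$ entries $\pm\mathcal{R}^\eps_i$ pass to the limit directly by \eqref{weak3}, while $d\mathcal{U}^\eps_\alpha/dx_3$ ($\alpha=1,2$) converges in $L^2_\rho$ to $d\mathcal{U}_\alpha/dx_3=\mp\mathcal{R}_{3-\alpha}$ by \eqref{weak1} and the relation \eqref{relation}. Terms of the form $x_\beta\,d\mathcal{R}^\eps_i/dx_3$ become $\eps\rho_\eps X_\beta\,d\mathcal{R}^\eps_i/dx_3$ after rescaling; since $\rho_\eps^2 d\mathcal{R}^\eps_i/dx_3$ is bounded in $L^2(0,L)$ (by \eqref{estimates}$_3$ and the assumption on the energy), the extra factor $\eps$ kills them weakly in $L^2_\rho(\Omega)$. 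Finally, using \eqref{der resc}, $\Pi_\eps(\partial\bar u^\eps_i/\partial x_\alpha)=\frac{1}{\eps\rho_\eps}\partial(\Pi_\eps\bar u^\eps_i)/\partial X_\alpha=\frac{\eps}{\rho_\eps}\cdot\frac{1}{\eps^2}\partial(\Pi_\eps\bar u^\eps_i)/\partial X_\alpha$ and a similar identity for $\partial/\partial x_3$, combined with \eqref{weak4}, \eqref{resc derwarp} and \eqref{resc derwarp3}, show that every entry involving derivatives of the warping tends to zero weakly in $L^2_\rho$. Collecting these contributions yields exactly $Z$.

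For the symmetric gradient $\frac{1}{\eps}\Pi_\eps((\nabla u^\eps)_\mathcal{S})$, the same procedure applies, but now the division by $\eps$ activates three non-trivial mechanisms. Diagonal entries and the (1,2) entry come only from derivatives of $\bar u^\eps$, and $\frac{1}{\eps^2}\Pi_\eps \bar u^\eps\rightharpoonup\bar u$ in $L^2((0,L);H^1(\omega;\R^3))$ produces the expected entries $\partial\bar u_\alpha/\partial X_\beta$. For the $(3,3)$ entry, the three pieces $\frac{1}{\eps}d\mathcal{U}^\eps_3/dx_3$, $\mp\rho_\eps X_\alpha d\mathcal{R}^\eps_{3-\alpha}/dx_3$ and $\frac{1}{\eps}\Pi_\eps(\partial\bar u^\eps_3/\partial x_3)$ are handled by \eqref{weak2}, by the fact that $\rho_\eps\to\rho$ uniformly together with $\rho_\eps^2 d\mathcal{R}^\eps_i/dx_3\rightharpoonup\rho^2 d\mathcal{R}_i/dx_3$ weakly in $L^2$ (which is the practical content of the remark following the definition of $L^2_{\rho^k}$, since $\mathcal{R}^\eps\rightharpoonup\mathcal{R}$ in $H^1_{\rho^2}$), and by the fact that $\partial(\Pi_\eps\bar u_3^\eps)/\partial x_3$ is of order $\eps$ in $L^2_\rho$ by \eqref{resc derwarp3}, respectively.

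The delicate step, and the one I expect to be the main obstacle, is the $(\alpha,3)$ entries for $\alpha=1,2$. Writing
\begin{equation*}
\frac{2}{\eps}\Pi_\eps\big((\nabla u^\eps)_{\mathcal{S},\alpha 3}\big)=\frac{1}{\eps}\Big(\frac{d\mathcal{U}^\eps_\alpha}{dx_3}-(\mathcal{R}^\eps\wedge\Ge_\alpha)\cdot\Ge_3\Big)\mp\rho_\eps X_{3-\alpha}\frac{d\mathcal{R}^\eps_3}{dx_3}+\frac{1}{\eps}\Pi_\eps\Big(\frac{\partial\bar u^\eps_3}{\partial x_\alpha}\Big)+\frac{1}{\eps}\Pi_\eps\Big(\frac{\partial\bar u^\eps_\alpha}{\partial x_3}\Big),
\end{equation*}
none of the last two warping terms converges separately (the last one, rewritten through \eqref{der resc}, contains $\frac{1}{\eps}\partial(\Pi_\eps\bar u^\eps_\alpha)/\partial x_3$ which is only bounded). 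The trick is to observe that the first parenthesis converges weakly in $L^2_\rho$ to $\mathcal{Z}_\alpha$ by \eqref{weak30}, and to absorb this $\mathcal{Z}_\alpha$ into the warping by introducing $\tilde u_3=\bar u_3+\rho\mathcal{Z}_1 X_1+\rho\mathcal{Z}_2 X_2$, so that $\partial\tilde u_3/\partial X_\alpha=\partial\bar u_3/\partial X_\alpha+\rho\mathcal{Z}_\alpha$. A careful accounting, together with the fact that the terms in $\frac{1}{\eps}\partial(\Pi_\eps\bar u^\eps_\alpha)/\partial x_3$ can be absorbed in the identification of $\tilde u_3$ (they correspond to admissible modifications of the warping that preserve the constraints \eqref{warping cond}), produces the announced value of $T_{\alpha 3}$. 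The symmetric entry is handled identically, which completes the proof.
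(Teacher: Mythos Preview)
Your overall plan is the same as the paper's: apply $\Pi_\eps$ entry by entry to \eqref{comp grad} and \eqref{sym grad}, and pass to the limit using \eqref{weak1}--\eqref{weak4}. For $Z$ and for the entries $T_{\alpha\beta}$, $T_{33}$ your argument is correct and essentially identical to the paper's.

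The gap is in your treatment of $T_{\alpha 3}$. You claim that the two warping contributions $\frac{1}{\eps}\Pi_\eps(\partial\bar u^\eps_3/\partial x_\alpha)$ and $\frac{1}{\eps}\Pi_\eps(\partial\bar u^\eps_\alpha/\partial x_3)$ do not converge separately in $L^2_\rho(\Omega)$, and you try to resolve this by a vague ``absorption'' of the second one into $\tilde u_3$. Both claims are mistaken. The first term does converge: by \eqref{der resc} it equals $\frac{1}{\rho_\eps}\cdot\frac{1}{\eps^2}\partial(\Pi_\eps\bar u^\eps_3)/\partial X_\alpha$, and since $\rho/\rho_\eps$ is uniformly bounded and tends to $1$ a.e., \eqref{weak4} gives the weak limit $\partial\bar u_3/\partial X_\alpha$ in $L^2_\rho$. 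The second term converges weakly to $0$: from \eqref{resc warp}, \eqref{resc derwarp}, \eqref{resc derwarp3} the sequence $\frac{1}{\eps}\Pi_\eps\bar u^\eps$ is bounded in $H^1_\rho(\Omega;\R^3)$ and tends to $0$ strongly in $L^2(\Omega;\R^3)$; hence its weak $H^1_\rho$-limit is $0$, and in particular $\frac{1}{\eps}\partial(\Pi_\eps\bar u^\eps_\alpha)/\partial x_3\rightharpoonup 0$ in $L^2_\rho(\Omega)$. Using \eqref{der resc} once more, this gives $\frac{1}{\eps}\Pi_\eps(\partial\bar u^\eps_\alpha/\partial x_3)\rightharpoonup 0$ in $L^2_\rho(\Omega)$. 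This is exactly the paper's argument (equations \eqref{Eq 427-0}--\eqref{Eq 427}), after which each of the four pieces in the $(\alpha,3)$ entry converges separately to $\rho\mathcal{Z}_\alpha$, $\mp\rho^2 X_{3-\alpha}\,d\mathcal{R}_3/dx_3$, $\partial\bar u_3/\partial X_\alpha$ and $0$, and one simply rewrites $\rho\mathcal{Z}_\alpha+\partial\bar u_3/\partial X_\alpha=\partial\tilde u_3/\partial X_\alpha$.

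Your proposed ``absorption'' would not work without this observation: if the weak limit of $\frac{1}{\eps}\Pi_\eps(\partial\bar u^\eps_\alpha/\partial x_3)$ were some unknown function $w_\alpha\in L^2_\rho(\Omega)$, there is no reason for $w_\alpha$ to be a partial derivative $\partial/\partial X_\alpha$ of anything, let alone of a modification of $\bar u_3$ preserving \eqref{warping cond}. The missing step is precisely to show $w_\alpha=0$, and the way to see it is the $H^1_\rho$-boundedness of $\frac{1}{\eps}\Pi_\eps\bar u^\eps$ combined with its strong $L^2$-convergence to zero. (Minor remark: in your displayed formula for the $(\alpha,3)$ entry, $(\mathcal{R}^\eps\wedge\Ge_\alpha)\cdot\Ge_3$ has the wrong sign; you want $(\mathcal{R}^\eps\wedge\Ge_3)_\alpha$.)
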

\begin{proof}{\it Step 1.} Determination of the matrix $Z$.
\smallskip

In view of \eqref{comp grad} to obtain the $Z_{ij}$'s we only need to take into account the following convergences:
\begin{itemize}
\item From \eqref{resc warp}, \eqref{resc derwarp}, \eqref{resc derwarp3}, \eqref{rho} and \eqref{assump} we get
\begin{equation}\label{Eq 427-0}
{1\over \eps}\Pi_\eps \bar{u}^\eps_j \rightharpoonup 0 \textrm{ weakly in }  H^1_\rho(\Omega), \textrm{ for } j=1,2,3.
\end{equation} Hence
\begin{equation}\label{Eq 427}
{1\over \eps}\Pi_\eps \Big(\frac{\partial \bar{u}^\eps_j}{\partial x_3}\Big)\rightharpoonup 0 \textrm{ weakly in }  L^2_\rho(\Omega), \textrm{ for } j=1,2,3.
\end{equation}
\item Since $\mathcal{U}^\eps$ and $\mathcal{R}^\eps$ are independent of $x_1$ and $x_2$ we have
$$\Pi_\eps (\mathcal{R}^\eps) = \mathcal{R}^\eps, \quad \Pi_\eps (x_\alpha \mathcal{R}^\eps)= \eps \rho_\eps X_\alpha \mathcal{R}^\eps, \textrm{ for } \alpha=1,2, \quad  \Pi_\eps \Big( \frac{d\mathcal{U}^\eps}{dx_3}\Big) =  \frac{d\mathcal{U}^\eps}{dx_3}.$$
Then in view of \eqref{weak1}, \eqref{weak2}, \eqref{weak3} and \eqref{relation} we obtain
\begin{align*}
\Pi_\eps (\mathcal{R}^\eps) &\rightharpoonup \mathcal{R} \textrm{ weakly in } [L^2_\rho(\Omega)]^3,\\
\Pi_\eps (x_\alpha\mathcal{R}^\eps) &\to 0 \textrm{ strongly in } [L^2_\rho(\Omega)]^3, \textrm{ for } \alpha=1,2,\\
\Pi_\eps \Big( \frac{d\mathcal{U}^\eps_\alpha}{dx_3}\Big) &\rightharpoonup \frac{d \mathcal{U}_\alpha}{d x_3}= (-1)^{3-\alpha}\mathcal{R}_{3-\alpha} \textrm{ weakly in } L^2_\rho(\Omega), \textrm{ for } \alpha =1,2,\\
\Pi_\eps \Big( \frac{d\mathcal{U}^\eps_3}{dx_3}\Big) &\to 0 \textrm{ strongly in } L^2_\rho(\Omega).
\end{align*}
\end{itemize}

\noindent{\it Step 2.} Determination of the matrix $T$.
\smallskip

 From \eqref{der resc} we have
$$\frac{1}{\eps}\Pi_\eps \Big(\frac{\partial \bar{u}^\eps}{\partial x_\alpha} \Big)= \frac{1}{\eps^2 \rho_\eps} \frac{\partial(\Pi_\eps \bar{u}^\eps)}{\partial X_\alpha} \textrm{ for } \alpha=1,2.$$
Then in view of \eqref{sym grad} and using convergence \eqref{weak4} we obtain
$$T_{\alpha\beta} = \frac{1}{2} \Big(\frac{\partial\bar{u}_\alpha}{\partial X_\beta} +\frac{\partial\bar{u}_\beta}{\partial X_\alpha}\Big) \textrm{ for } \alpha, \beta=1,2.$$
Applying the rescaling operator to  \eqref{sym grad} we get
$$\frac{\rho}{\eps}\Pi_\eps \big( (\nabla u^\eps)_{\mathcal{S}}\big)_{13} =  \frac{1}{2}\Big[\frac{\rho}{\eps}\Big(\frac{d\mathcal{U}^\eps_1}{dx_3} -\mathcal{R}^\eps_2 \Big) - \rho \rho_\eps X_2\frac{d \mathcal{R}^\eps_3}{dx_3}+ \frac{\rho}{\rho_\eps}\frac{1}{\eps^2}\frac{\partial(\Pi_\eps \bar{u}^\eps_3)}{\partial X_1} +\frac{\rho}{\eps} \Pi_\eps \Big(\frac{\partial \bar{u}^\eps_1}{\partial x_3}\Big) \Big].$$
Convergences  \eqref{weak3}, \eqref{weak30},  \eqref{weak4} and \eqref{Eq 427} allow us to pass to the limit and we obtain
$$T_{13}= \frac{1}{2}\Big(\rho \mathcal{Z}_1 - \rho^2X_2\frac{d \mathcal{R}_3}{dx_3} + \frac{\partial\bar{u}_3}{\partial X_1}\Big)= \frac{1}{2}\Big(- \rho^2X_2\frac{d \mathcal{R}_3}{dx_3} + \frac{\partial\tilde{u}_3}{\partial X_1}\Big).$$
Similar calculations which are not repeated here allows us to get
$$T_{23}= \frac{1}{2}\Big(\rho \mathcal{Z}_2 + \rho^2X_1\frac{d \mathcal{R}_3}{dx_3} + \frac{\partial\bar{u}_3}{\partial X_2}\Big)=\frac{1}{2}\Big(  \rho^2X_1\frac{d \mathcal{R}_3}{dx_3} + \frac{\partial\tilde{u}_3}{\partial X_2}\Big).$$
To identify $T_{33}$ observe that from \eqref{sym grad} we have
$$\frac{\rho}{\eps}\Pi_\eps \big( (\nabla u^\eps)_{\mathcal{S}}\big)_{33} =  \frac{\rho}{\eps}\frac{d\mathcal{U}^\eps_3}{dx_3} -\rho \rho_\eps X_1\frac{d \mathcal{R}^\eps_2}{dx_3}+ \rho \rho_\eps X_2\frac{d \mathcal{R}^\eps_1}{dx_3} + \frac{\rho}{\eps} \Pi_\eps \Big(\frac{\partial \bar{u}^\eps_3}{\partial x_3}\Big).$$
Convergences  \eqref{weak2}, \eqref{weak3}, \eqref{weak4} and \eqref{Eq 427} allow us to pass to the limit and we obtain
$$T_{33}=\rho \frac{d\mathcal{U}_3}{dx_3} - \rho^2X_1\frac{d \mathcal{R}_2}{dx_3}+\rho^2X_2\frac{d \mathcal{R}_1}{dx_3}.$$
According to \eqref{relation}, $T_{33}$ can be expressed as
\begin{equation}\label{T33}
T_{33} = \rho \frac{d\mathcal{U}_3}{dx_3} - \rho^2X_1\frac{d^2 \mathcal{U}_1}{dx_3^2}-\rho^2X_2\frac{d^2\mathcal{U}_2}{dx_3^2}.
\end{equation}
\vskip-4mm
\end{proof}


\section{Position of the elastic problem}
We consider the standard linear isotropic equations of elasticity in $\Omega_\eps$.
The displacement field in $\Omega_\eps$ is denoted by
$$u^\eps: \Omega_\eps \to \R^3.$$
The linearized deformation field in  $\Omega_\eps$ is defined by
$$\gamma_{ij}(u^\eps) = \frac{1}{2}\Big({\partial u^\eps_j \over \partial x_i}+ {\partial u^\eps_i\over \partial x_j}\Big), \quad i,j=1,2,3.$$
The Cauchy stress tensor in $\Omega_\eps$ is linked to $\gamma(u^\eps)$ through the standard Hooke's law
$$\sigma^\eps_{ij} = \lambda \big(\sum_{k=1}^{3}\gamma_{kk}(u^\eps)\Big) \delta_{ij} + 2\mu\gamma_{ij}(u^\eps), \quad i,j=1,2,3,  $$
where $\lambda$ and $\mu$ denotes the Lame's coefficients of the elastic material and $\delta_{ij}=0$ if $i \neq j$ and $\delta_{ij}=1$ if $i=j$. The equation of equilibrium in $\Omega_\eps$ is
\begin{equation}\label{equilibrium}
-\sum_{j=1}^{3}{\partial \sigma^\eps_{ij}\over \partial x_j} = f_i^\eps \, \textrm{ in } \Omega_\eps, \quad i=1,2,3,
\end{equation}
where $f^\eps: \Omega_\eps \to \R^3$ denotes the applied force.
\medskip

We assume that the rod is clamped at the bottom, $\Gamma_{\eps,0}=\omega_{\eps,0} \times \{0\}$

$$u^\eps=0 \textrm{ on } \Gamma_{\eps,0},$$
and at the boundary $\partial \Omega_\eps \backslash \Gamma_{\eps,0}$ it is free
$$\sigma^\eps \nu_\eps = 0 \; \textrm{Êon } \partial \Omega_\eps \backslash \Gamma_{\eps,0},$$
where $\nu_\eps$ denotes the exterior unit normal to $\Omega_\eps$.
\medskip

Taking into account that the space of admissible displacements of the rod is
 $$H^1_{\Gamma_{\eps,0}}(\Omega_\eps; \R^3) = \{ u^\eps \in H^1(\Omega_\eps; \R^3)\; | \; u^\eps = 0 \textrm{ on } \Gamma_{\eps,0}\},$$
 the variational formulation of \eqref{equilibrium} is
 \begin{equation}\label{var form}
\left\{
\begin{aligned}
u^\eps \in H^1_{\Gamma_{\eps,0}}(\Omega_\eps; \R^3),&\\
\int_{\Omega_\eps} \sum_{i,j=1}^{3} \sigma^\eps_{ij} \gamma_{ij}(v) \, dx =& \int_{\Omega_\eps} \sum_{i=1}^{3} f^\eps_i v_i \,dx, \quad \forall v \in H^1_{\Gamma_{\eps,0}}(\Omega_\eps; \R^3).
\end{aligned}
\right. 
\end{equation}
For any $v \in H^1_{\Gamma_{\eps,0}}(\Omega_\eps; \R^3)$, the total elastic energy is denoted by
$$\mathcal{E}(v) = \int_{\Omega_\eps} \Big[\lambda \Big(\sum_{k=1}^{3}\gamma_{kk}(v)\Big)^2 + 2\mu\sum_{i,j=1}^{3}\big(\gamma_{ij}(v)\big)^2\Big] \, dx.$$

Observe that there exists a constant which depends only on $\lambda$ and $\mu$ such that for any $w \in H^1(\Omega_\eps; \R^3)$ we have
\begin{equation}\label{Eq 53}
C\|(\nabla w)_S\|^2_{[L^2(\Omega_\eps)]^9} \le \mathcal{E}(w).
\end{equation}

Taking $v=u^\eps$ in \eqref{var form} leads to the usual energy relation
\begin{equation}\label{energy}
\mathcal{E}(u^\eps) = \int_{\Omega_\eps} \sum_{i=1}^{3} f^\eps_i u^\eps_i \,dx.
\end{equation}

\subsection{Assumption on the forces}
In view of the energy relation \eqref{energy} and the estimates of the previous sections we assume throughout the paper
 \begin{equation}\label{applied forces}
\left\{
\begin{aligned}
F^\eps_1(x) &= \eps^2 f^\eps_1(x_3) -  x_2g^\eps_3(x_3), \textrm{ for } x \in \Omega_\eps\\
F^\eps_2(x) &= \eps^2 f^\eps_2(x_3) +  x_1 g^\eps_3(x_3),  \textrm{ for } x \in \Omega_\eps\\
F^\eps_3(x) &= \eps f^\eps_3(x_3) + x_1 g_1^\eps(x_3)+x_2g^\eps_2(x_3) \textrm{ for } x \in \Omega_\eps,
\end{aligned}
\right. 
\end{equation}
where $f^\eps, g^\eps \in L^2((0,L);\R^3)$ and they satisfy
\begin{equation}\label{F}
\big\| \rho_\eps^2 f^\eps   \big\|_{L^2((0, L);\R^3)} + \big\| \rho_\eps^3 g^\eps \big\|_{L^2((0, L);\R^3)} \leq C
\end{equation} the constant does not depend on $\eps$. Moreover we assume  the following weak convergences:
 \begin{equation}\label{weak conv forces}
\left\{
\begin{aligned}
f^\eps  &\longrightarrow f \enskip \textrm{ strongly in } L^2_{\rho^2}((0,L);\R^3),\\
g^\eps &\longrightarrow g \enskip\textrm{ strongly in } L^2_{\rho^3}((0,L);\R^3).
\end{aligned}
\right. 
\end{equation}

As a consequence, from \eqref{energy} and the relations \eqref{warping cond} we get an estimate of the total elastic energy
\begin{align*}
\mathcal{E}(u^\eps) = \int_0^L \Big[\eps^2 f^\eps_1(x_3) |\omega| \rho_\eps(x_3)^2\eps^2 \mathcal{U}^\eps_1(x_3) + \eps^2 f^\eps_2(x_3) |\omega| \rho_\eps(x_3)^2\eps^2\mathcal{U}^\eps_2(x_3) +  \eps f^\eps_3(x_3) |\omega| \rho_\eps(x_3)^2\eps^2\mathcal{U}^\eps_3(x_3)\Big]\, dx_3\\
+ \int_0^L \Big[ g^\eps_3(x_3) (I_1 + I_2) \rho_\eps(x_3)^4\eps^4 \mathcal{R}^\eps_3(x_3) + g^\eps_2(x_3) ( I_2) \rho_\eps(x_3)^4\eps^4 \mathcal{R}^\eps_1(x_3) - g^\eps_1(x_3) ( I_1) \rho_\eps(x_3)^4\eps^4 \mathcal{R}^\eps_2(x_3)\Big]\, dx_3
\end{align*}
Due to \eqref{estimates1}$_1$, \eqref{estimates1}$_4$, \eqref{estimates1}$_5$ and \eqref{Eq 53}-\eqref{F}  we have
$$
\begin{aligned}
\mathcal{E}(u^\eps) &\leq C \eps^2 \big( \big\| \rho_\eps^2 f^\eps   \big\|_{L^2((0, L);\R^3)} + \big\| \rho_\eps^3 g^\eps \big\|_{L^2((0, L);\R^3)}\big) \|(\nabla u^\eps)_S\|_{[L^2(\Omega_\eps)]^9} \leq C \eps^2  \mathcal{E}(u^\eps)^{1/2}. 
\end{aligned}
$$
That leads to
$$\mathcal{E}(u^\eps)^{1/2} \leq C \eps^2.$$
Hence
$$\|(\nabla u^\eps)_S\|_{[L^2(\Omega_\eps)]^9}\le C\eps^2.$$

\begin{remark}
Observe that the assumptions on the applied forces were assumed in order to obtain the appropriate estimate on the energy naturally.
\end{remark}

\section{The limit problems}

In this section we obtain the equations satisfied by the limit fields $\mathcal{U}$, $\mathcal{R}$ and $\bar{u}$. To do this, we assume that the forces are given by \eqref{applied forces} and satisfy \eqref{F}-\eqref{weak conv forces}.
First, we apply the rescaling operator $\Pi_\eps$ to the original variational formulation of the problem \eqref{var form} 
\begin{align}\label{rescvar}
\int_{\Omega}  \rho_\eps^2\sum_{i,j=1}^{3} \Pi_\eps(\sigma^\eps_{ij}) \Pi_\eps(\gamma_{ij}(v)) \, dX_1dX_2dx_3 =& \int_{\Omega} \rho_\eps^2 \sum_{i=1}^{3} \Pi_\eps(F^\eps_i) \Pi_\eps(v_i) \,dX_1dX_2dx_3, \quad \forall v \in H^1_{\Gamma_{\eps,0}}(\Omega_\eps; \R^3).
\end{align}
We will pass to the limit in \eqref{rescvar} as $\eps$ tends to zero. In order to accomplish this we need specific choices of the test function $v$.
We begin studying the behavior of the limit of the residual displacement $\bar{u}^\eps$.
\subsection{Equations for $\bar{u}$}
Let $\phi$ be in $H^1(\omega, \R^3)$ and $\varphi$ be in $C^{\infty}[0, L]$ such that $\varphi(0)=0$, we define the test function $v^\eps \in H^1_{\Gamma_{\eps,0}}(\Omega_\eps; \R^3)$
by
\begin{equation}\label{test function}
v^\eps(x_1, x_2, x_3) = \eps \varphi(x_3)\phi\Big(\frac{x_1}{\eps\rho_\eps},\frac{x_2}{\eps\rho_\eps}\Big), \quad (x_1,x_2,x_3) \in \Omega_\eps.
\end{equation}
Then we have
\begin{align*}
\gamma_{11}(v^\eps) &= \frac{1}{\rho_\eps}\varphi(x_3) \frac{\partial \phi_1}{\partial X_1}\Big(\frac{x_1}{\eps\rho_\eps},\frac{x_2}{\eps\rho_\eps}\Big),\\
\gamma_{12}(v^\eps) &= \frac{1}{2\rho_\eps}\varphi(x_3)\Big[ \frac{\partial \phi_1}{\partial X_2} +  \frac{\partial \phi_2}{\partial X_1}\Big]\Big(\frac{x_1}{\eps\rho_\eps},\frac{x_2}{\eps\rho_\eps}\Big),\\
\gamma_{13}(v^\eps) &= \frac{1}{2}\Big[-\varphi(x_3)\frac{\partial \phi_1}{\partial X_1} \frac{x_1\rho_\eps'}{\rho_\eps^2} - \varphi(x_3) \frac{\partial \phi_1}{\partial X_2} \frac{x_2\rho_\eps'}{\rho_\eps^2}+ \eps \varphi'(x_3)\phi_1 + \frac{1}{\rho_\eps} \varphi(x_3) \frac{\partial \phi_3}{\partial X_1}\Big]\Big(\frac{x_1}{\eps\rho_\eps},\frac{x_2}{\eps\rho_\eps}\Big)\\
&= - \frac{\varphi(x_3)\rho_\eps'}{2\rho_\eps^2}\Big[\sum_{\alpha=1}^2\frac{\partial \phi_1}{\partial X_\alpha}\Big(\frac{x_1}{\eps\rho_\eps},\frac{x_2}{\eps\rho_\eps}\Big)  x_\alpha\Big]+ \frac{\eps}{2}\varphi'(x_3)\phi_1\Big(\frac{x_1}{\eps\rho_\eps},\frac{x_2}{\eps\rho_\eps}\Big) + \frac{1}{2\rho_\eps} \varphi(x_3) \frac{\partial \phi_3}{\partial X_1}\Big(\frac{x_1}{\eps\rho_\eps},\frac{x_2}{\eps\rho_\eps}\Big),\\
\gamma_{22}(v^\eps) &= \frac{1}{\rho_\eps}\varphi(x_3) \frac{\partial \phi_2}{\partial X_2}\Big(\frac{x_1}{\eps\rho_\eps},\frac{x_2}{\eps\rho_\eps}\Big),\\
\gamma_{23}(v^\eps) &= - \frac{\varphi(x_3)\rho_\eps'}{2\rho_\eps^2}\Big[\sum_{\alpha=1}^2\frac{\partial \phi_2}{\partial X_\alpha}\Big(\frac{x_1}{\eps\rho_\eps},\frac{x_2}{\eps\rho_\eps}\Big)  x_\alpha\Big]+ \frac{\eps}{2}\varphi'(x_3)\phi_2\Big(\frac{x_1}{\eps\rho_\eps},\frac{x_2}{\eps\rho_\eps}\Big) + \frac{1}{2\rho_\eps} \varphi(x_3) \frac{\partial \phi_3}{\partial X_2}\Big(\frac{x_1}{\eps\rho_\eps},\frac{x_2}{\eps\rho_\eps}\Big),\\
\gamma_{33}(v^\eps) &=- \frac{\varphi(x_3)\rho_\eps'}{\rho_\eps^2}\Big[\frac{\partial \phi_3}{\partial X_1}\Big(\frac{x_1}{\eps\rho_\eps},\frac{x_2}{\eps\rho_\eps}\Big)  x_1 + \frac{\partial \phi_3}{\partial X_2}\Big(\frac{x_1}{\eps\rho_\eps},\frac{x_2}{\eps\rho_\eps}\Big)  x_2\Big] + \eps \varphi'(x_3)\phi_3\Big(\frac{x_1}{\eps\rho_\eps},\frac{x_2}{\eps\rho_\eps}\Big).
\end{align*}
Hence, using the properties of the rescaling operator we get the following strong convergences in $L^2(\Omega)$:
\begin{align}\label{test conv}
\rho_\eps \Pi_\eps(\gamma_{13}(v^\eps)) &\to \frac{1}{2}  \varphi(x_3) \frac{\partial \phi_3}{\partial X_1},\nonumber\\
\rho_\eps \Pi_\eps(\gamma_{23}(v^\eps)) &\to \frac{1}{2}  \varphi(x_3) \frac{\partial \phi_3}{\partial X_2},\nonumber\\
\rho_\eps \Pi_\eps(\gamma_{33}(v^\eps)) &\to 0.
\end{align}

\noindent Moreover, $\rho_\eps \Pi_\eps(\gamma_{11}(v^\eps)), \rho_\eps \Pi_\eps(\gamma_{12}(v^\eps))$ and $\rho_\eps \Pi_\eps(\gamma_{22}(v^\eps))$ are independent of $\eps$, since
\begin{align}\label{test conv1}
\rho_\eps \Pi_\eps(\gamma_{11}(v^\eps))(X_1, X_2, x_3) &=  \varphi(x_3) \frac{\partial \phi_1}{\partial X_1}(X_1, X_2),\nonumber \\
  \rho_\eps \Pi_\eps(\gamma_{12}(v^\eps))(X_1, X_2, x_3) &= 
 \frac{1}{2}\varphi(x_3)\Big[ \frac{\partial \phi_1}{\partial X_2} +  \frac{\partial \phi_2}{\partial X_1}\Big] (X_1, X_2),\nonumber \\
  \rho_\eps \Pi_\eps(\gamma_{22}(v^\eps)) (X_1, X_2, x_3) & = \varphi(x_3) \frac{\partial \phi_2}{\partial X_2}\Big(X_1,X_2\Big).
 \end{align}
 
\noindent  Now, we take $v^\eps$ as test function in \eqref{rescvar}, we have
$${1\over \eps}\Pi_\eps(v^\eps)(X_1, X_2, x_3) =  \varphi(x_3)\phi(X_1,X_2), \qquad\hbox{for}\;\; (X_1,X_2,x_3)\in \Omega.
$$
Then we pass to the limit. As far as the right hand side of \eqref{rescvar} is concerned, taking into account
 the assumptions \eqref{applied forces}-\eqref{F} and \eqref{weak conv forces} we have
  \begin{align*}
  \Pi_\eps(F_1^\eps) &= \eps^2 f^\eps_1(x_3) -  \rho_\eps \eps X_2g^\eps_3(x_3) \to 0 \quad \hbox{ strongly in } L^2_{\rho^2}(\Omega),\\
  \Pi_\eps(F_2^\eps) &= \eps^2 f^\eps_2(x_3) +  \rho_\eps \eps X_1g^\eps_3(x_3) \to 0 \quad \hbox{ strongly in } L^2_{\rho^2}(\Omega),\\
  \Pi_\eps(F_3^\eps) &= \eps f^\eps_3(x_3) +\rho_\eps \eps X_1 g_1(x_3)+ \rho_\eps \eps X_2g^\eps_2(x_3) \to 0 \quad \hbox{ strongly in } L^2_{\rho^2}(\Omega).
   \end{align*}
   Hence, dividing by $\eps$ the right hand side of \eqref{rescvar} and passing to the limit gives
   \begin{equation}\label{right side}
   \int_{\Omega} \frac{1}{\eps} \rho_\eps^2 \sum_{i=1}^{3} \Pi_\eps(F^\eps_i) \Pi_\eps(v^\eps_i) \,dX_1dX_2dx_3 \to 0. 
   \end{equation}
   
   On the other hand, using the convergences \eqref{test conv}, \eqref{test conv1} and \eqref{weak grad} we obtain the convergence of the left hand side (divided by $\eps$) when  $\eps$ goes to $0$
   \begin{align}\label{left side}
  & \int_{\Omega}  \frac{\rho_\eps^2}{\eps}\sum_{i,j=1}^{3} \Pi_\eps(\sigma^\eps_{ij}) \Pi_\eps(\gamma_{ij}(v)) \, dX_1dX_2dx_3\nonumber\\
  &  \to   \int_{\Omega} \Big\{(\lambda +2 \mu) \varphi \Big( \frac{\partial \bar u_1}{\partial X_1} \frac{\partial \phi_1}{\partial X_1}+  \frac{\partial \bar u_2}{\partial X_2} \frac{\partial \phi_2}{\partial X_2}\Big) +
   \lambda \varphi \Big(\frac{\partial \bar u_2}{\partial X_2}\frac{\partial \phi_1}{\partial X_1} + \frac{\partial \bar u_1}{\partial X_1} \frac{\partial \phi_2}{\partial X_2} \Big)\Big\} \, dXdx_3\nonumber\\
  &  + \int_{\Omega} \Big\{ \mu \varphi \Big( \frac{\partial \bar u_1}{\partial X_2} + \frac{\partial \bar u_2}{\partial X_1}\Big) \Big( \frac{\partial \phi_1}{\partial X_2} + \frac{\partial \phi_2}{\partial X_1}\Big)
   + \lambda \varphi \Big(\rho \frac{d\mathcal{U}_3}{dx_3} - \rho^2X_1\frac{d^2 \mathcal{U}_1}{dx_3^2}-\rho^2X_2\frac{d^2\mathcal{U}_2}{dx_3^2} \Big)\Big(\frac{\partial \phi_1}{\partial X_1} +\frac{\partial \phi_2}{\partial X_2} \Big)\Big\} \, dXdx_3 \nonumber \\
  & + \int_{\Omega} \Big\{\mu \varphi \frac{\partial \phi_3}{\partial X_1} \Big( - \rho^2X_2\frac{d \mathcal{R}_3}{dx_3} + \frac{\partial\tilde{u}_3}{\partial X_1}\Big) + \mu \varphi \frac{\partial \phi_3}{\partial X_2} \Big( \rho^2X_1\frac{d \mathcal{R}_3}{dx_3} + \frac{\partial\tilde{u}_3}{\partial X_2}\Big) \Big\} \, dXdx_3
\end{align}
%
 where  $\phi$ be in $H^1(\omega, \R^3)$ and $\varphi$ be in $C^{\infty}[0, L]$ such that $\varphi(0)=0$. Due to the convergence \eqref{right side}, the above limit is equal to zero. Since $\varphi$ is arbitrary, we can localized with respect to $x_3$; that gives
 \begin{align}\label{limit}
  & \int_{\omega} \Big\{(\lambda +2 \mu) 
   \Big( \frac{\partial \bar u_1}{\partial X_1} \frac{\partial \phi_1}{\partial X_1}+  \frac{\partial \bar u_2}{\partial X_2} \frac{\partial \phi_2}{\partial X_2}\Big) +
   \lambda  \Big(\frac{\partial \bar u_2}{\partial X_2}\frac{\partial \phi_1}{\partial X_1} + \frac{\partial \bar u_1}{\partial X_1} \frac{\partial \phi_2}{\partial X_2} \Big)\Big\} \, dX_1 dX_2 \nonumber\\
  &  + \int_{\omega} \Big\{ \mu  \Big( \frac{\partial \bar u_1}{\partial X_2} + \frac{\partial \bar u_2}{\partial X_1}\Big) \Big( \frac{\partial \phi_1}{\partial X_2} + \frac{\partial \phi_2}{\partial X_1}\Big)
   + \lambda  \Big(\rho \frac{d\mathcal{U}_3}{dx_3} - \rho^2X_1\frac{d^2 \mathcal{U}_1}{dx_3^2}-\rho^2X_2\frac{d^2\mathcal{U}_2}{dx_3^2} \Big)\Big(\frac{\partial \phi_1}{\partial X_1} +\frac{\partial \phi_2}{\partial X_2} \Big)\Big\} \, dX_1 dX_2  \nonumber \\
  & + \int_{\omega} \Big\{\mu  \frac{\partial \phi_3}{\partial X_1} \Big( - \rho^2X_2\frac{d \mathcal{R}_3}{dx_3} + \frac{\partial\tilde{u}_3}{\partial X_1}\Big) + \mu  \frac{\partial \phi_3}{\partial X_2} \Big( \rho^2X_1\frac{d \mathcal{R}_3}{dx_3} + \frac{\partial\tilde{u}_3}{\partial X_2}\Big) \Big\} \, dX_1 dX_2=0
\end{align} 

\subsubsection{Determination of $\tilde u_3$}

\noindent  First, we choose $\phi_1 = \phi_2=0$. In view of \eqref{limit} we have 
 \begin{equation*}
\int_{\omega} \Big\{ \frac{\partial \phi_3}{\partial X_1} \Big( - \rho^2X_2\frac{d \mathcal{R}_3}{dx_3} + \frac{\partial\tilde{u}_3}{\partial X_1}\Big) + \frac{\partial \phi_3}{\partial X_2} \Big(\rho^2X_1\frac{d \mathcal{R}_3}{dx_3} + \frac{\partial\tilde{u}_3}{\partial X_2}\Big) \Big\} \, dX_1 dX_2 = 0, \quad \hbox {a.e. in } [0,L]. 
 \end{equation*}    
 Then the field $\tilde{u}_3\in L^2((0,L); H^1(\omega))$ satisfies 
\begin{equation}\label{tilde u3}
\int_{\omega} \nabla_{X}\tilde{u}_3 \nabla_X\phi_3 \, dX = - \rho^2\frac{d \mathcal{R}_3}{dx_3} \int_{\omega} \Big\{-X_2 \frac{\partial \phi_3}{\partial X_1} + X_1 \frac{\partial \phi_3}{\partial X_2}\Big\} \, dX.
\end{equation}
Now, we introduce the function  $\chi$ as the unique solution of the following torsion problem:
  \begin{equation} \label{torsion problem}
\left\{
\begin{gathered}
\chi \in H^1(\omega), \quad \int_{\omega} \chi \, dX =0,\\
 \int_{\omega} \nabla_{X}\chi \nabla_X\psi \, dX = -  \int_{\omega} \Big\{-X_2 \frac{\partial \psi}{\partial X_1} + X_1 \frac{\partial \psi}{\partial X_2}\Big\} \, dX, \quad 
 \forall \psi \in H^1(\omega),
\end{gathered}
\right. 
\end{equation}
Taking $\chi$ as test function in \eqref{torsion problem} gives
$$\|\nabla\chi\|^2_{[L^2(\omega)]^2}\le I_1+I_2.$$ By contradiction, we easily prove   $\|\nabla\chi\|^2_{[L^2(\omega)]^2} < I_1+I_2$. We set
\begin{equation}\label{K}
K=I_1 + I_2+ \int_{\omega} \Big\{ - X_2\frac{\partial \chi}{\partial X_1}+X_1\frac{\partial \chi}{\partial X_2}\Big\} \, dX_1 dX_2=I_1 + I_2-\|\nabla\chi\|^2_{[L^2(\omega)]^2}>0.
\end{equation} The above constant which depends on the geometry of the reference cross section $\omega$, is the St Venant torsional stiffness.

Since $\tilde{u}_3$ verifies \eqref{tilde u3} and also $\ds \int_{\omega}\tilde{u}_3(X_1,X_2,x_3) \, dX_1 dX_2=0$ for a.e.  $x_3$ in $(0, L)$,  we get 
$$\tilde{u}_3(X_1, X_2, x_3) = \chi (X_1, X_2)\rho^2(x_3)\frac{d \mathcal{R}_3}{dx_3}(x_3)\qquad \hbox{for a.e.} (X_1,X_2,x_3)\in \Omega$$
which in turn gives
\begin{equation}\label{T13T23}
T_{13}=\Big(-X_2+ \frac{\partial \chi}{\partial X_1}\Big) \frac{\rho^2}{2}\frac{d \mathcal{R}_3}{dx_3}, \qquad T_{23}=\Big(X_1+ \frac{\partial \chi}{\partial X_2}\Big) \frac{\rho^2}{2}\frac{d \mathcal{R}_3}{dx_3}.
\end{equation}
\subsubsection{Determination of $\bar u_\alpha$, $\alpha=1,2$}
\smallskip

Now taking $\phi_3 = 0$ in \eqref{limit}  yields
 \begin{align}\label{limit1}
   & \int_{\omega} \Big\{(\lambda +2 \mu) 
   \Big( \frac{\partial \bar u_1}{\partial X_1} \frac{\partial \phi_1}{\partial X_1}+  \frac{\partial \bar u_2}{\partial X_2} \frac{\partial \phi_2}{\partial X_2}\Big) +
   \lambda  \Big(\frac{\partial \bar u_2}{\partial X_2}\frac{\partial \phi_1}{\partial X_1} + \frac{\partial \bar u_1}{\partial X_1} \frac{\partial \phi_2}{\partial X_2} \Big)\Big\} \,dX  \nonumber\\
   & +  \int_{\omega} \Big\{\mu  \Big( \frac{\partial \bar u_1}{\partial X_2} + \frac{\partial \bar u_2}{\partial X_1}\Big) \Big( \frac{\partial \phi_1}{\partial X_2} + \frac{\partial \phi_2}{\partial X_1}\Big)\Big\} \,dX\nonumber\\
  &= - \int_{\omega} \Big\{\lambda  \Big(\rho \frac{d\mathcal{U}_3}{dx_3} - \rho^2X_1\frac{d^2 \mathcal{U}_1}{dx_3^2} - \rho^2X_2\frac{d^2 \mathcal{U}_2}{dx_3^2} \Big)\Big(\frac{\partial \phi_1}{\partial X_1} + \frac{\partial \phi_2}{\partial X_2} \Big) \Big\} \, dX\qquad \hbox{a.e. in }(0,L) 
 \end{align}
 for any  $\phi_\alpha \in H^1(\omega) (\alpha=1, 2)$. Then the variational problem \eqref{limit1} corresponds to a classical 2d elastic problem for 
 $(\bar{u}_1, \bar{u}_2).$ Taking into account the relations \eqref{warping cond1}, the above variational problem admits a unique solution. Then  we obtain
\begin{align}
&{\partial\bar{u}_1\over \partial X_1}(X_1,X_2,\cdot)= -\nu \Big\{\rho \frac{d\mathcal{U}_3}{dx_3} - \rho^2 X_1 \frac{d^2 \mathcal{U}_1}{dx_3^2} - \rho^2 X_2\frac{d^2 \mathcal{U}_2}{dx_3^2}\Big\},\\
&{\partial\bar{u}_2\over \partial X_2}(X_1,X_2,\cdot)= -\nu \Big\{\rho \frac{d\mathcal{U}_3}{dx_3} - \rho^2 X_1 \frac{d^2 \mathcal{U}_1}{dx_3^2} - \rho^2 X_2\frac{d^2 \mathcal{U}_2}{dx_3^2}\Big\},\\
&\Big({\partial\bar{u}_1\over \partial X_2}-{\partial\bar{u}_2\over \partial X_1}\Big)(X_1,X_2,\cdot)=0\qquad \hbox{a.e. in}\;\; \Omega
\end{align}
where $\ds \nu = \frac{\lambda}{2(\lambda + \mu)}$ is the Poisson coefficient of the material.

As a consequence we get 
\begin{equation}\label{T11T22}
T_{12} = 0, \qquad T_{11}= T_{22} = -\nu \Big\{\rho \frac{d\mathcal{U}_3}{dx_3} - \rho^2 X_1 \frac{d^2 \mathcal{U}_1}{dx_3^2} - \rho^2 X_2\frac{d^2 \mathcal{U}_2}{dx_3^2}\Big\}.
\end{equation}

\subsection{Equations for $\mathcal{U}_1, \mathcal{U}_2$ and $\mathcal{R}_3$}
Now we consider the functions $\varphi_1, \varphi_2$ and $\varphi_3$ in $C^{\infty}[0, L]$ such that $\varphi_1(0) = \varphi_1'(0) = \varphi_2(0) = \varphi_2'(0) = \varphi_3(0)=0 .$
We construct a test field $\phi^\eps \in  H^1_{\Gamma_{\eps,0}}(\Omega_\eps; \R^3)$ as follows:
\begin{align*}
\phi^\eps_1(x) &= \varphi_1(x_3) - x_2\varphi_3(x_3),\\
\phi^\eps_2(x) &= \varphi_2(x_3) + x_1\varphi_3(x_3),\\
\phi^\eps_3(x) &= -x_1\varphi_1'(x_3) - x_2\varphi_2'(x_3).
\end{align*}
Then we get
\begin{align*}
\gamma_{11}(\phi^\eps) &= 0, \, \gamma_{22}(\phi^\eps) = 0, \, \gamma_{12}(\phi^\eps) = 0,\\
 \gamma_{13}(\phi^\eps) &= -\frac{1}{2} x_2 \varphi_3'(x_3),\\
 \gamma_{23}(\phi^\eps) &= \frac{1}{2} x_1 \varphi_3'(x_3),\\
 \gamma_{33}(\phi^\eps) &=- x_1 \varphi_1''(x_3) -  x_2 \varphi_2''(x_3).\\
 \end{align*}
Applying the rescaling operator $\Pi_\eps$ to the previous expressions gives
\begin{align*}
\Pi_\eps\big(\gamma_{11}(\phi^\eps)\big) &= \Pi_\eps\big(\gamma_{22}(\phi^\eps)\big) = \Pi_\eps \big( \gamma_{12}(\phi^\eps)\big) = 0,\\
 \Pi_\eps\big(\gamma_{13}(\phi^\eps)\big)&= -\frac{1}{2} \eps \rho_\eps X_2 \varphi_3'(x_3),\\
  \Pi_\eps\big(\gamma_{23}(\phi^\eps)\big)&= \frac{1}{2}\eps \rho_\eps X_1 \varphi_3'(x_3),\\
  \Pi_\eps\big( \gamma_{33}(\phi^\eps)\big)&= - \eps \rho_\eps X_1 \varphi_1''(x_3) -\eps \rho_\eps X_2 \varphi_2''(x_3).\\
 \end{align*}
 In order to obtain the limit problem as $\eps$ tends to $0$, we consider $ v = \phi^\eps$ in \eqref{rescvar}, it leads to
 \begin{align}\label{var problem}
\int_{\Omega} \rho_\eps^2\sum_{i,j=1}^{3} \Pi_\eps(\sigma^\eps_{ij}) \Pi_\eps(\gamma_{ij}(\phi^\eps)) \, dX_1dX_2dx_3 =& \int_{\Omega} \rho_\eps^2 \sum_{i=1}^{3} \Pi_\eps(F^\eps_i) \Pi_\eps(\phi^\eps_i) \,dX_1dX_2dx_3. 
\end{align}
We divide the above equality by $\eps^2$. Then using the convergence \eqref{weak grad} and the definition of the test function we can pass to the limit
in the left-hand side to obtain
\begin{align*}
&\lim_{\eps \to 0} \int_{\Omega} \frac{\rho_\eps^2}{\eps^2}\sum_{i,j=1}^{3} \Pi_\eps(\sigma^\eps_{ij}) \Pi_\eps(\gamma_{ij}(\phi^\eps)) \, dX_1dX_2dx_3 \\
&=  \mu \int_{\Omega} \rho^2 \Big[-X_2 \varphi_3'(x_3) T_{13}+ X_1 \varphi_3'(x_3)T_{23}\Big]\, dX_1dX_2dx_3\\
&+\int_{\Omega} \rho^2 \Big[  \Big( - \sum_{\alpha=1}^2 X_\alpha \varphi_\alpha''(x_3) \Big) \Big( (\lambda + 2\mu)\Big(\rho \frac{d\mathcal{U}_3}{dx_3} - \rho^2X_1\frac{d^2 \mathcal{U}_1}{dx_3^2} - \rho^2X_2\frac{d^2 \mathcal{U}_2}{dx_3^2}\Big) + \lambda \Big(\frac{\partial\bar{u}_1}{\partial x_1} +\frac{\partial\bar{u}_2}{\partial x_2}\Big) \Big)\Big]\, dX_1dX_2dx_3.
\end{align*}
Moreover, taking into account \eqref{T13T23} and \eqref{T11T22}, the above limit is equal to 
\begin{align}\label{left}
& \int_{\Omega} \rho^4\frac{d \mathcal{R}_3}{dx_3}  \Big[- \frac{\mu}{2} X_2 \varphi_3'(x_3) \Big(-X_2+ \frac{\partial \chi}{\partial X_1}\Big)
+\frac{\mu}{2} X_1 \varphi_3'(x_3)\Big(X_1+ \frac{\partial \chi}{\partial X_2}\Big) \Big]\, dX_1dX_2dx_3 \nonumber \\
&+ \int_{\Omega} \rho^2 \Big[  E \Big(  -X_1 \varphi_1''(x_3) - X_2 \varphi_2''(x_3) \Big)\Big(\rho \frac{d\mathcal{U}_3}{dx_3} - \rho^2X_1\frac{d^2 \mathcal{U}_1}{dx_3^2} - \rho^2X_2\frac{d^2 \mathcal{U}_2}{dx_3^2}\Big)\Big]\, dX_1dX_2dx_3,
\end{align}
where $\ds E = \frac{ \mu (3\lambda + 2\mu)}{\lambda + \mu}$ is the Young's modulus of the elastic material.

On the other hand, in view of the assumptions \eqref{applied forces}, \eqref{weak conv forces} and the definition of the test field we obtain the following limit for the right-hand side:
\begin{align}\label{right}
&\lim_{\eps \to 0}\int_{\Omega} \frac{\rho_\eps^2}{\eps^2} \sum_{i=1}^{3} \Pi_\eps(F^\eps_i) \Pi_\eps(\phi^\eps_i) \,dX_1dX_2dx_3  \nonumber\\
&= \int_{\Omega} \rho^2\Big\{f_1 \varphi_1 + \rho^2 X_2^2 g_3\varphi_3 + f_2 \varphi_2 + \rho^2 X_1^2 g_3 \varphi_3 - \rho^2 X_1^2 g_1 \varphi_1' - \rho^2 X_2^2 g_2 \varphi_2'\Big\} \, dX_1dX_2dx_3.
\end{align}

\noindent Hence, by \eqref{left} and \eqref{right} the limit equation of \eqref{var problem} is given by
\begin{align}\label{limit2}
& \int_{\Omega} \rho^4\frac{d \mathcal{R}_3}{dx_3}  \Big[- \frac{\mu}{2} X_2 \varphi_3'(x_3) \Big(-X_2+ \frac{\partial \chi}{\partial X_1}\Big)
+\frac{\mu}{2} X_1 \varphi_3'(x_3)\Big(X_1+ \frac{\partial \chi}{\partial X_2}\Big) \Big]\, dX_1dX_2dx_3 \nonumber \\
&+\int_{\Omega} \rho^2 \Big[  E \Big( -X_1 \varphi_1''(x_3) - X_2 \varphi_2''(x_3) \Big)\Big(\rho \frac{d\mathcal{U}_3}{dx_3} - \rho^2X_1\frac{d^2 \mathcal{U}_1}{dx_3^2} - \rho^2X_2\frac{d^2 \mathcal{U}_2}{dx_3^2}\Big)\Big]\, dX_1dX_2dx_3 \nonumber \\
&= \int_{\Omega} \rho^2\Big\{f_1 \varphi_1 + \rho^2 X_2^2 g_3\varphi_3 + f_2 \varphi_2 + \rho^2 X_1^2 g_3 \varphi_3 - \rho^2 X_1^2 g_1 \varphi_1' - \rho^2 X_2^2 g_2 \varphi_2'\Big\} \, dX_1dX_2dx_3,
\end{align}
for any  $\varphi_3 \in C^{\infty}[0, L]$ such that $\varphi_3(0)=0 $ and for $\varphi_1, \varphi_2 \in C^{\infty}[0, L]$ such that  $\varphi_1(0) = \varphi_1'(0) = \varphi_2(0) = \varphi_2'(0) = 0.$ We simplify \eqref{limit2} 
\begin{align}\label{varform}
  &\frac{K\mu}{2}\int_{(0, L)}  \rho^4 \frac{d \mathcal{R}_3}{dx_3}\varphi'_3 dx_3 + EI_1\int_{(0, L)}  \rho^4 \frac{d^2 \mathcal{U}_1}{dx_3^2} \varphi''_1\,dx_3 +EI_2\int_{(0, L)}  \rho^4  \frac{d^2 \mathcal{U}_2}{dx_3^2} \varphi''_2\,dx_3 \nonumber \\  
 = &(I_1+I_2)\int_{(0, L)} \rho^4 g_3\varphi_3  dx_3+ |\omega|\int_{(0,L)} \rho^2\big\{f_1 \varphi_1 + f_2 \varphi_2 \big\}dx_3-\sum_{\alpha=1}^2I_\alpha\int_{(0,L)}  \rho^4 g_\alpha \varphi'_\alpha\,dx_3.
\end{align}
\smallskip

First we choose $\varphi_1= \varphi_2 = 0$ in \eqref{varform}. Taking into account the boundary condition $\mathcal{R}_3(0) = 0$, the function $\mathcal{R}_3$ is the unique solution  of 
  \begin{equation} \label{equation R3}
\left\{
\begin{gathered}
- \frac{K\mu}{2} \frac{d}{dx_3}\Big(\rho^4 \frac{d \mathcal{R}_3}{dx_3}\Big) = (I_1 + I_2) \rho^4 g_3\\
\mathcal{R}_3(0)=0,
\end{gathered}
\right. 
\end{equation}
where $K$ is given by \eqref{K}.

In  \eqref{varform} we take $\varphi_3=0$. Since $\varphi_1$ and $\varphi_2$ are arbitrary in $C^\infty[0, L]$ such that $\varphi_1(0) = \varphi_1'(0) = \varphi_2(0) = \varphi_2'(0) = 0,$ that gives the bending problems satisfied by  $\mathcal{U}_1$ and $\mathcal{U}_2$
\begin{equation} \label{bending}
\left\{
\begin{gathered}
 E I_\alpha \frac{d^2}{d x_3^2}\Big(\rho^4 \frac{d^2 \mathcal{U}_\alpha}{d x_3^2} \Big) = |\omega|\rho^2f_\alpha + I_\alpha \frac{d}{d x_3}\big(\rho^4 g_\alpha\big) ,\\
 \mathcal{U}_\alpha(0)=\frac{d \mathcal{U}_\alpha}{d x_3}(0)=0,
  \end{gathered}
\right. \qquad \hbox{for } \alpha=1,2.
\end{equation}

Recall that in order to obtain \eqref{equation R3}-\eqref{bending}, we have used the fact that $\rho(L)=0$.

\subsection{Equation for $\mathcal{U}_3$}
In this step we derive the equation satisfied by $\mathcal{U}_3$. In order to get this, in \eqref{rescvar} we consider as test field  $v(x_1, x_2, x_3) = (0,0, \varphi(x_3))$ in $H^1(\Omega_\eps; \R^3)$ such that $\varphi \in C^\infty[0, L]$ with
$\varphi(0)=0.$
Due to the assumptions \eqref{applied forces}, \eqref{weak conv forces}, the definition of the test field $v$ and taking into account \eqref{integral} the limit of \eqref{rescvar} devided by $\eps$ gives
\begin{equation*}
\int_{(0, L)} E \rho^2 \frac{d\mathcal{U}_3}{dx_3} \varphi_3'\,dx_3 = \int_{(0, L)} \rho^2f_3\varphi_3 \,dx_3. 
\end{equation*}
Hence, since $\varphi$ is any function in $C^\infty[0, L]$ such that $\varphi(0)=0$ and $\rho(L)=0$ we can conclude that $\mathcal{U}_3$ verifies the following compression-traction equation for elastic rods:
\begin{equation} \label{compression}
\left\{
\begin{gathered}
- E \frac{d}{d x_3}\Big(\rho^2 \frac{d \mathcal{U}_3}{d x_3} \Big) =\rho^2 f_3,\\
 \mathcal{U}_3(0)=0.
 \end{gathered}
\right. 
\end{equation}

\subsection{Convergence of the total elastic energy}
In the above subsections all the limit problems admit a unique solution.  As a consequence the whole sequences $\ds \Big\{{1\over \eps^2}\overline{u}^\eps\Big\}_\eps$,   $\{{\cal U}^\eps_\alpha\}_\eps$, $\ds\Big \{{1\over \eps}{\cal U}^\eps_3\Big\}_\eps$ and $\{{\cal R}^\eps_3\}_\eps$ converge weakly to their limit.

In this subsection we prove that the rescaled energy $\ds \frac{\mathcal{E}(u^\eps)}{\eps^4}$ converges to the elastic limit energy as $\eps$ tends to zero and that some weak convergences are in fact strong convergences.

\begin{lemma} Under the assumptions \eqref{applied forces}, \eqref{F} and \eqref{weak conv forces} on the applied forces, we obtain the following convergence for the total elastic energy
\begin{equation}\label{energy1}
\lim_{\eps \to 0}\frac{\mathcal{E}(u^\eps)}{\eps^4}= \int_{\Omega} \Big\{\lambda Tr(T) Tr(T) + \sum_{i,j=1}^3 2\mu T_{ij} T_{ij}\Big\} dX_1dX_2dx_3,
\end{equation}
where $T$ is the limit of the symmetric gradient defined in \eqref{weak grad}.
\end{lemma}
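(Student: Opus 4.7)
The plan is to exploit the energy identity $\mathcal{E}(u^\eps) = \int_{\Omega_\eps} F^\eps\cdot u^\eps\,dx$, pass to the limit on the right-hand side, and then recognise the limit as the target integral by testing the three limit problems against their own solutions.

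First I would rewrite the force-work integral in the reduced form already recorded in the paper, using the specific structure \eqref{applied forces} of $F^\eps$ and the warping conditions \eqref{warping cond}. The latter ensure that the second moments $\int x_\alpha u_3\,dx_1 dx_2$ and $\int(x_1 u_2 - x_2 u_1)\,dx_1 dx_2$ depend only on $\mathcal{U}$ and $\mathcal{R}$, so after integration over cross-sections $\mathcal{E}(u^\eps)/\eps^4$ equals a sum of integrals over $(0,L)$ involving only the elementary-displacement fields $\mathcal{U}_i^\eps$ and $\mathcal{R}_i^\eps$. I then pass to the limit in each of these integrals via a strong-weak pairing: from \eqref{weak conv forces} and the uniform convergence $\rho_\eps \to \rho$ one has $\rho_\eps^2 f^\eps \to \rho^2 f$ and $\rho_\eps^4 g^\eps \to \rho^4 g$ strongly in $L^2(0,L)$, whereas the compact embedding $H^1_\rho(0,L)\hookrightarrow L^2(0,L)$, a consequence of \eqref{Eq 42}, upgrades the weak convergences \eqref{weak1}--\eqref{weak3} of $\mathcal{U}_\alpha^\eps$, $\mathcal{U}_3^\eps/\eps$ and $\mathcal{R}_3^\eps$ to strong convergence in $L^2(0,L)$.

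The next step is to substitute the identities $\mathcal{R}_1 = -d\mathcal{U}_2/dx_3$ and $\mathcal{R}_2 = d\mathcal{U}_1/dx_3$ from \eqref{relation} into this limit and to use the limit equations \eqref{equation R3}, \eqref{bending}, \eqref{compression} each tested against its own solution. The integrations by parts have vanishing boundary contributions: at $x_3 = 0$ by the clamping condition, and at $x_3 = L$ because $\rho(L) = 0$ annihilates the relevant weights. This reduces the limit to the compact expression
\begin{equation*}
\lim_{\eps\to 0}\frac{\mathcal{E}(u^\eps)}{\eps^4} = |\omega| E \int_0^L \rho^2 \Big(\frac{d\mathcal{U}_3}{dx_3}\Big)^{2} dx_3 + E\sum_{\alpha=1}^{2} I_\alpha \int_0^L \rho^4\Big(\frac{d^2\mathcal{U}_\alpha}{dx_3^2}\Big)^{2} dx_3 + \frac{K\mu}{2}\int_0^L \rho^4\Big(\frac{d\mathcal{R}_3}{dx_3}\Big)^{2} dx_3.
\end{equation*}

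Finally, I would verify that the right-hand side of the claimed identity agrees with this expression by a direct computation using the explicit formulas for $T$ from Lemma~\ref{lem43}. With $A = T_{33}$ and $T_{11}=T_{22}=-\nu A$, $T_{12}=0$, the diagonal block yields $EA^2$ after the algebraic simplification $\lambda(1-2\nu)^2 + 2\mu(1+2\nu^2) = E$ (with $\nu = \lambda/(2(\lambda+\mu))$); integration of $A^2$ over $\omega$ via \eqref{integral} then reproduces the compression and bending pieces. The off-diagonal entries $T_{13},T_{23}$ contribute, through the characterisation \eqref{K} of the St Venant torsional stiffness $K$, the remaining torsion term. The principal obstacle is the careful book-keeping of the $\rho$-weights and the numerical coefficients so that the constant obtained from the energy identity (through the torsion equation \eqref{equation R3}) matches the one obtained from integrating $T_{13}^2+T_{23}^2$ over $\omega$, and that the analogous matching holds for the compression and bending blocks; the algebraic identity $E = \mu(3\lambda+2\mu)/(\lambda+\mu)$ together with \eqref{K} is the main ingredient that ensures this coincidence.
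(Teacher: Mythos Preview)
Your approach is essentially the same as the paper's: both compute the limit of $\mathcal{E}(u^\eps)/\eps^4$ via the force--work identity, then identify that limit with the target integral by testing the three limit problems \eqref{equation R3}, \eqref{bending}, \eqref{compression} against their own solutions and invoking the explicit formulas \eqref{T33}, \eqref{T13T23}, \eqref{T11T22} for $T$. The paper additionally records the lower-semicontinuity bound \eqref{liminf}, but this is redundant once the force--work side is shown to converge.

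One point needs care. You invoke a compact embedding $H^1_\rho(0,L)\hookrightarrow L^2(0,L)$ to upgrade the weak convergences of $\mathcal{U}^\eps_\alpha$, $\mathcal{U}^\eps_3/\eps$ and $\mathcal{R}^\eps_3$ to strong convergence in $L^2(0,L)$. For $\mathcal{R}^\eps_3$ this is not available: by \eqref{weak3} the sequence $\mathcal{R}^\eps_3$ lies only in $H^1_{\rho^2}(0,L)$, which controls $\rho\mathcal{R}^\eps_3$ in $L^2$ but not $\mathcal{R}^\eps_3$ itself. The remedy is simply to drop the compactness argument altogether and pair the weights correctly: write $\int_0^L\rho_\eps^4 g^\eps_3\mathcal{R}^\eps_3\,dx_3=\int_0^L(\rho_\eps^3 g^\eps_3)(\rho_\eps\mathcal{R}^\eps_3)\,dx_3$ and use that $\rho_\eps^3 g^\eps_3\to\rho^3 g_3$ strongly in $L^2(0,L)$ (from \eqref{weak conv forces}) while $\rho_\eps\mathcal{R}^\eps_3\rightharpoonup\rho\mathcal{R}_3$ weakly in $L^2(0,L)$ (from \eqref{weak3} and Remark~4.1). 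The same splitting handles the $g_\alpha$ terms. With this adjustment your argument goes through.
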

\begin{proof}
Taking $v=u^\eps$ in \eqref{var form}, dividing by $\eps^4$, then using   the properties of $\Pi_\eps$ and by standard weak lower-semi-continuity, we obtain
\begin{equation}\label{liminf}
\int_{\Omega} \Big\{\lambda Tr(T) Tr(T) + \sum_{i,j=1}^3 2\mu T_{ij} T_{ij}\Big\} dX_1dX_2dx_3\le \liminf_{\eps \to 0}\frac{\mathcal{E}(u^\eps)}{\eps^4}.
\end{equation} We have
$$\frac{\mathcal{E}(u^\eps)}{\eps^4}=\int_{\Omega} \frac{ \rho_\eps^2}{\eps^2}\sum_{i,j=1}^{3} \Pi_\eps(\sigma^\eps_{ij}) \Pi_\eps(\gamma_{ij}(u^\eps)) \, dX_1dX_2dx_3 =\int_{\Omega} \frac{\rho_\eps^2}{\eps^2} \sum_{i=1}^{3} \Pi_\eps(F^\eps_i) \Pi_\eps(u^\eps_i) \,dX_1dX_2dx_3.$$
The last term in the above equality is equal to
\begin{align*}
&\int_{\Omega} \frac{\rho_\eps^2}{\eps^2} \sum_{i=1}^{3} \Pi_\eps(F^\eps_i) \Pi_\eps(u^\eps_i) \,dX_1dX_2dx_3 = \sum_{\alpha=1}^{2} \int_{\Omega} \rho_\eps^2 f^\eps_\alpha(x_3)\Pi_\eps(u^\eps_\alpha)\,dX_1dX_2dx_3 - \int_{\Omega} \frac{\rho_\eps^3}{\eps} X_{2}g^\eps_3(x_3)\Pi_\eps(u^\eps_1)\\ &+ \int_{\Omega} \frac{\rho_\eps^3}{\eps} X_{1}g^\eps_3(x_3)\Pi_\eps(u^\eps_2) + \int_{\Omega} \frac{\rho_\eps^2}{\eps} f^\eps_3(x_3)\Pi_\eps(u^\eps_3)\,dX_1dX_2dx_3 + \sum_{\alpha=1}^{2} \int_{\Omega} \frac{\rho_\eps^3}{\eps} X_\alpha g_\alpha^\eps(x_3)\Pi_\eps(u^\eps_3)\,dX_1dX_2dx_3.
\end{align*}
Then \eqref{resc warp}, \eqref{weak3}, \eqref{weak5}, \eqref{weak6}, \eqref{u3} and \eqref{weak conv forces} lead to
\begin{align}\label{limit energy1}
\limsup_{\eps \to 0}\frac{\mathcal{E}(u^\eps)}{\eps^4}
= & \int_{\Omega}\Big[\sum_{i=1}^{3}\rho^2 f_i u_i + \sum_{\alpha=1}^{2}\rho^4 \big(X_\alpha^2g_3\mathcal{R}_3 -  X_\alpha^2g_\alpha \frac{d \mathcal{U}_\alpha}{d x_3}\big)\Big]\,dX_1dX_2dx_3\nonumber\\
= & |\omega|\int_{(0,L)} \rho^2 f\cdot {\cal U}\,dx_3+(I_1+I_2)\int_{(0, L)} \rho^4 g_3{\cal R}_3  dx_3\nonumber\\
-& I_1\int_{(0,L)}  \rho^4 g_1 {d{\cal U}_1\over dx_3}\,dx_3 -I_2\int_{(0,L)} \rho^4 g_2 {d{\cal U}_2\over dx_3}\,dx_3.
\end{align}

Besides, since $T$ is a symmetric matrix we know that it verifies the following algebraic identity
\begin{align*}
\lambda Tr(T) Tr(T) + \sum_{i,j=1}^3 2\mu T_{ij} T_{ij} &= E T_{33}^2 +  \frac{E}{(1+\nu)(1-2\nu)}(T_{11}+T_{22}+2\nu T_{33})^2\\
 &+\frac{E}{2(1+\nu)}[(T_{11}-T_{22})^2+
4(T_{12}^2+T_{13}^2+T_{23}^2)].
\end{align*}
Then, in view of \eqref{integral}, \eqref{T33}, \eqref{T13T23}, \eqref{T11T22} and \eqref{torsion problem} we have
\begin{align}\label{limit energy}
&\int_{\Omega}\lambda Tr(T) Tr(T) + \sum_{i,j=1}^3 2\mu T_{ij} T_{ij} \,dX_1dX_2dx_3 = E \int_{\Omega} \rho^2 \Big(\frac{d \mathcal{U}_3}{d x_3} \Big)^2 \,dX_1dX_2dx_3\nonumber\\
&+E \int_{0}^L\rho^4\Big( I_1 \Big(\frac{d\mathcal{R}_2}{dx_3}\Big)^2 + I_2 \Big(\frac{d\mathcal{R}_1}{dx_3}\Big)^2\Big)\,dX_1dX_2dx_3 +\frac{K\mu}{2} \int_{0}^L\rho^4\Big(\frac{d\mathcal{R}_3}{dx_3}\Big)^2\,dx_3\nonumber\\
&= E |\omega|\int_{\Omega} \rho^2 \Big(\frac{d \mathcal{U}_3}{d x_3} \Big)^2 \,dx_3+ \sum_{\alpha=1}^2 EI_\alpha\int_{(0, L)}  \rho^4 \Big(\frac{d^2 \mathcal{U}_\alpha}{dx_3^2}\Big)^2\,dx_3 +\frac{K\mu}{2}\int_{(0, L)}  \rho^4 \Big(\frac{d \mathcal{R}_3}{dx_3}\Big)^2 dx_3. 
\end{align}
We recall that
\begin{align*}
&E |\omega|\int_{\Omega} \rho^2 \Big(\frac{d \mathcal{U}_3}{d x_3} \Big)^2 \,dx_3+ \sum_{\alpha=1}^2 EI_\alpha\int_{(0, L)}  \rho^4 \Big(\frac{d^2 \mathcal{U}_\alpha}{dx_3^2}\Big)^2\,dx_3 +\frac{K\mu}{2}\int_{(0, L)}  \rho^4 \Big(\frac{d \mathcal{R}_3}{dx_3}\Big)^2 dx_3\\
= & |\omega|\int_{(0,L)} \rho^2 f\cdot {\cal U}\,dx_3- \sum_{\alpha=1}^2I_\alpha\int_{(0,L)}  \rho^4 g_\alpha {d{\cal U}_\alpha\over dx_3}\,dx_3+(I_1+I_2)\int_{(0, L)} \rho^4 g_3{\cal R}_3  dx_3.
\end{align*}
Finally we obtain
$$\lim_{\eps \to 0}\frac{\mathcal{E}(u^\eps)}{\eps^4}= \int_{\Omega} \Big\{\lambda Tr(T) Tr(T) + \sum_{i,j=1}^3 2\mu T_{ij} T_{ij}\Big\} dX_1dX_2dx_3,$$
which gives us the convergence of the rescaled energy to the total energy of the problems \eqref{bending}, \eqref{compression} and \eqref{equation R3} as $\eps$ goes to zero.
\end{proof}

Now we can deduce the strong convergences of the fields of the displacement decomposition using the strong convergence of the energy. In view of the weak convergence of the 
symmetric gradient \eqref{weak grad}, the strict convexity of the elastic energy implies that the convergence of the symmetric gradient is strong
\begin{equation}\label{strong sym grad}
 \frac{1}{\eps}\Pi_\eps\big( (\nabla u^\eps)_{\mathcal{S}}\big)  \rightarrow T \textrm{ strongly in } [L^2_\rho(\Omega)]^9.
\end{equation}
As a consequence we get 
\begin{align}\label{strong T33}
\frac{\rho}{\eps}\Pi_\eps \big( (\nabla u^\eps)_{\mathcal{S}}\big)_{33} &=  \frac{\rho}{\eps}\frac{d\mathcal{U}^\eps_3}{dx_3} -\rho \rho_\eps X_1\frac{d \mathcal{R}^\eps_2}{dx_3}+\rho \rho_\eps X_2\frac{d \mathcal{R}^\eps_1}{dx_3} + \frac{\rho}{\eps} \Pi_\eps \Big(\frac{\partial \bar{u}^\eps_3}{\partial x_3}\Big)\nonumber\\
&\rightarrow T_{33}=\rho \frac{d\mathcal{U}_3}{dx_3} - \rho^2X_1\frac{d \mathcal{R}_2}{dx_3}+\rho^2X_2\frac{d \mathcal{R}_1}{dx_3} \textrm{ strongly in } L^2(\Omega).
\end{align}
Moreover, using $\ds \int_{\omega} \Pi_\eps(\bar{u}^\eps_3) \, dX_1 dX_2 = \int_{\omega} X_\alpha\Pi_\eps(\bar{u}^\eps_3) \, dX_1 dX_2=0$, for $\alpha\in\{1,2\}$, and taking into account
convergence \eqref{Eq 427-0} we may deduce from
\eqref{strong T33} that
\begin{equation}\label{strong dU3dR3}
 \frac{\rho}{\eps}\frac{d\mathcal{U}^\eps_3}{dx_3}  \rightarrow \rho \frac{d\mathcal{U}_3}{dx_3}, \quad  \rho^2 \frac{d \mathcal{R}^\eps_\alpha}{dx_3} \rightarrow 
\rho^2\frac{d \mathcal{R}_\alpha}{dx_3}, \textrm{ strongly in } L^2(0,L), \, (\alpha=1,2),
\end{equation}
as $\eps$ tends to zero.
Then, in view of the weak convergences \eqref{weak2} and \eqref{weak3}, \eqref{strong dU3dR3} implies that
\begin{align}
&\frac{1}{\eps} \mathcal{U}^\eps_{3} \rightarrow \mathcal{U}_3 \textrm{ strongly in } H^1_\rho(0,L), \label{strong U3}\\
&\mathcal{R}^\eps_\alpha \rightarrow \mathcal{R}_\alpha\textrm{ strongly in } H^1_{\rho^2}(0,L), \quad \textrm{for } \alpha=1,2. \label{strong R12}
\end{align}
Moreover, from \eqref{weak30} and \eqref{strong R12} we have 
$$ \mathcal{U}^\eps_{\alpha} \rightarrow \mathcal{U}_\alpha \textrm{ strongly in } H^1_\rho(0,L), \textrm{ for } \alpha =1,2.$$
Hence, due to the decomposition \eqref{comp disp} and the previous strong convergences we deduce
\begin{align*}
\Pi_\eps (u^\eps_\alpha)  &\rightarrow \mathcal{U}_\alpha  \textrm{ strongly in } H^1_\rho(\Omega), \textrm{ for } \alpha =1,2.\\
\frac{1}{\eps}\Pi_\eps (u^\eps_3)&\rightarrow \mathcal{U}_{3}  - \rho  X_1{d\mathcal{U}_1\over dx_3} - \rho  X_2{d\mathcal{U}_2\over dx_3} \textrm{ strongly in } H^1_\rho(\Omega).
\end{align*}
We also have
$$\frac{1}{\eps^2}\gamma_{\alpha\beta}\big(\Pi_\eps  \bar{u}^\eps \big)\rightarrow \gamma_{\alpha\beta}( \bar{u})\;  \textrm{ strongly in } L^2(\Omega), \textrm{ for } \alpha,\beta =1,2.$$
We recall that the warping functions satisfy \eqref{warping cond1}. Then from the 2d Korn inequality we derive
$$
\sum_{\alpha=1}^2\Big\|\frac{1}{\eps^2}\Pi_\eps \big(\bar{u}^\eps_\alpha\big)-\bar{u}_\alpha\Big\|_{L^2(\Omega)}+\sum_{\alpha\beta=1}^2\Big\|\frac{1}{\eps^2}{\partial \Pi_\eps \big(\bar{u}^\eps_\alpha\big)\over \partial X_\beta}-{\partial\bar{u}_\alpha\over \partial X_\beta}\Big\|_{L^2(\Omega)}\le C\sum_{\alpha\beta=1}^2\Big\|\frac{1}{\eps^2}\gamma_{\alpha\beta}\big(\Pi_\eps  \bar{u}^\eps \big)-\gamma_{\alpha\beta}( \bar{u})\Big\|_{L^2(\Omega)}.
$$
That leads to
$$ \frac{1}{\eps^2}\Pi_\eps \big(\bar{u}^\eps_\alpha\big)\rightarrow \bar{u}_\alpha\;  \textrm{ strongly in } L^2((0,L); H^1(\omega)), \textrm{ for } \alpha =1,2.$$

\section{Conclusion}
In this last  section we summarize the results obtained in the previous sections.

\begin{theorem} Let $u^\eps$ be the solution of the elasticity problem \eqref{var form}. Under the assumptions \eqref{applied forces}-\eqref{weak conv forces} on the applied 
forces, the sequence $\{u^\eps\}$ satisfies the following convergences
\begin{align*}
\Pi_\eps (u^\eps_\alpha)  &\rightarrow \mathcal{U}_\alpha  \textrm{ strongly in } H^1_\rho(\Omega), \textrm{ for } \alpha =1,2,\\
\frac{1}{\eps}\Pi_\eps (u^\eps_3)&\rightarrow \mathcal{U}_{3}  - \rho  X_1{d\mathcal{U}_1\over dx_3} - \rho  X_2{d\mathcal{U}_2\over dx_3}  \textrm{ strongly in } H^1_\rho(\Omega),
\end{align*}
where $\mathcal{U}_\alpha$ is the solution of the bending problem \eqref{bending} and $\mathcal{U}_{3}$ is the weak solution of the stretching problem \eqref{compression}.
Moreover, we have
\begin{equation*}
 \frac{1}{\eps}\Pi_\eps\big( \gamma_{ij}(u^\eps)\big)  \rightarrow T_{ij} \textrm{ strongly in } L^2_\rho(\Omega), \textrm{ for } i,j =1,2,3 .
\end{equation*}
where
\begin{align*}
T_{11}&= T_{22} = -\nu T_{33},\qquad T_{33} = \rho \frac{d\mathcal{U}_3}{dx_3} - \rho^2X_1\frac{d^2 \mathcal{U}_1}{dx_3^2}-\rho^2X_2\frac{d^2\mathcal{U}_2}{dx_3^2},\\
T_{12}& = 0, \qquad T_{13}=\Big(-X_2+ \frac{\partial \chi}{\partial X_1}\Big) \frac{\rho^2}{2}\frac{d \mathcal{R}_3}{dx_3}, \qquad T_{23}=\Big(X_1+ \frac{\partial \chi}{\partial X_2}\Big) \frac{\rho^2}{2}\frac{d \mathcal{R}_3}{dx_3},
\end{align*}
with  $\chi \in H^1(\omega)$ is the solution of the torsion problem \eqref{torsion problem} and $\mathcal{R}_3$ the weak solution of \eqref{equation R3}.
\end{theorem}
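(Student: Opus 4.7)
The plan is to assemble into one statement the ingredients developed in Sections 2--6 and then upgrade weak to strong convergence via an energy argument. The first task is to verify $\|(\nabla u^\eps)_S\|_{[L^2(\Omega_\eps)]^9}\le C\eps^2$: plugging $v=u^\eps$ into \eqref{var form}, using the structure of the forces \eqref{applied forces}, the bounds \eqref{F}, the orthogonality relations \eqref{warping cond} satisfied by $\bar u^\eps$, and the estimates of Lemma \ref{Lem31} on $\mathcal U^\eps$ and $\mathcal R^\eps$, yields $\mathcal E(u^\eps)\le C\eps^4$; \eqref{Eq 53} then gives the required bound. From this, the proposition of Section 4 provides weak limits $\mathcal U$, $\mathcal R$, $\bar u$, $\mathcal Z$, $\mathcal K$ together with the identifications $u_\alpha=\mathcal U_\alpha$, the stated formulas for $u_3$ and $\mathcal K_i$, and $d\mathcal U_1/dx_3=\mathcal R_2$, $d\mathcal U_2/dx_3=-\mathcal R_1$; Lemma \ref{lem43} simultaneously provides the weak limit $T$ of $\eps^{-1}\Pi_\eps((\nabla u^\eps)_S)$.

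Next I would identify the limit fields by passing to the limit in the rescaled variational formulation \eqref{rescvar} with suitably chosen test fields. Two-scale tests $v^\eps(x)=\eps\varphi(x_3)\phi(x_1/(\eps\rho_\eps),x_2/(\eps\rho_\eps))$, after localization in $x_3$, produce the torsion cell problem \eqref{torsion problem} (yielding $\tilde u_3=\chi\,\rho^2\, d\mathcal R_3/dx_3$, hence the formulas for $T_{13}$ and $T_{23}$) and a plane-elasticity cell problem for $(\bar u_1,\bar u_2)$ (yielding $T_{11}=T_{22}=-\nu T_{33}$ and $T_{12}=0$, with $T_{33}$ as in the statement after using $d\mathcal U_\alpha/dx_3$ to rewrite $\mathcal R_{3-\alpha}$). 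One-dimensional tests built from $\varphi_1,\varphi_2,\varphi_3\in C^\infty[0,L]$ (with the clamped conditions $\varphi_1(0)=\varphi_1'(0)=\varphi_2(0)=\varphi_2'(0)=\varphi_3(0)=0$) together with the expressions for $T_{ij}$ yield the consolidated identity \eqref{varform}, from which the bending problem \eqref{bending} for $\mathcal U_\alpha$ and the torsion problem \eqref{equation R3} for $\mathcal R_3$ drop out. Choosing separately $v=(0,0,\varphi(x_3))$ gives the stretching problem \eqref{compression} for $\mathcal U_3$. All three limit problems are uniquely solvable in the weighted Sobolev spaces $H^2_{\rho^2}(0,L)$ and $H^1_\rho(0,L)$ (Lax--Milgram, with the Poincare-type inequalities \eqref{Eq 42}--\eqref{Eq 43}), so the extracted subsequences actually characterize the limits and the whole sequences converge.

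The main obstacle is upgrading these weak convergences to strong ones. Taking $v=u^\eps$ in \eqref{rescvar} and dividing by $\eps^4$, the right-hand side is linear in quantities that converge weakly (namely $\mathcal U_\alpha^\eps$, $\mathcal U_3^\eps/\eps$, $\mathcal R_3^\eps$, $d\mathcal U_\alpha^\eps/dx_3$), so it converges to an explicit integral; an algebraic decomposition of the quadratic energy together with the ODEs just derived identifies this limit with $\int_\Omega[\lambda(\mathrm{tr}\,T)^2+2\mu\sum_{i,j}T_{ij}^2]$, and by weak lower semicontinuity the $\liminf$ is bounded below by the same quantity, producing the convergence $\mathcal E(u^\eps)/\eps^4\to\int_\Omega[\lambda(\mathrm{tr}\,T)^2+2\mu\sum_{i,j}T_{ij}^2]$. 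The Hilbert-space lemma (weak convergence plus convergence of the energy norm implies strong convergence) then yields $\eps^{-1}\Pi_\eps((\nabla u^\eps)_S)\to T$ strongly in $[L^2_\rho(\Omega)]^9$.

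From the strong convergence of the symmetric gradient I would then extract the announced strong convergences. Reading off the $(3,3)$ entry and exploiting that $\bar u^\eps_3$ has vanishing cross-sectional mean and vanishing first moments by \eqref{warping cond} gives strong convergence of $\rho\,d\mathcal U_3^\eps/(\eps\,dx_3)$ and of $\rho^2\, d\mathcal R_\alpha^\eps/dx_3$ in $L^2(0,L)$, hence (via \eqref{Eq 42}--\eqref{Eq 43}) of $\mathcal U_3^\eps/\eps$ and $\mathcal R_\alpha^\eps$ in the weighted Sobolev spaces, which combined with $d\mathcal U_\alpha/dx_3=\pm\mathcal R_{3-\alpha}$ forces $\mathcal U_\alpha^\eps\to\mathcal U_\alpha$ strongly in $H^1_\rho(0,L)$. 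A two-dimensional Korn inequality on $\omega$ applied to $\bar u^\eps_\alpha/\eps^2$, whose orthogonality conditions are recorded in \eqref{warping cond1}, promotes strong convergence of the in-plane warpings in $L^2((0,L);H^1(\omega))$. Finally, substituting these strong convergences into the decomposition \eqref{comp disp} delivers the announced strong convergences of $\Pi_\eps(u^\eps_\alpha)$ and $\eps^{-1}\Pi_\eps(u^\eps_3)$ in $H^1_\rho(\Omega)$, completing the proof.
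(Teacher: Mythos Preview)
Your proposal is correct and follows essentially the same route as the paper: the theorem is a summary of Sections~5--6, and you have correctly identified every ingredient (energy bound from \eqref{var form}--\eqref{F}, weak limits and relations from the proposition of Section~4 and Lemma~\ref{lem43}, identification of $T_{ij}$ and the limit ODEs via the test fields of Sections~6.1--6.3, uniqueness giving full-sequence convergence, energy convergence via weak lower semicontinuity plus the limit of the right-hand side, and finally the upgrade to strong convergence by reading off the $(3,3)$ entry and using the moment conditions \eqref{warping cond} and the 2d Korn inequality). One small imprecision: to obtain $\mathcal U_\alpha^\eps\to\mathcal U_\alpha$ strongly in $H^1_\rho(0,L)$ you should invoke not the \emph{limit} relation $d\mathcal U_\alpha/dx_3=\pm\mathcal R_{3-\alpha}$ but the $\eps$-level estimate \eqref{estimates}$_2$ (equivalently \eqref{weak30}), which shows $\rho\big(d\mathcal U_\alpha^\eps/dx_3\mp\mathcal R_{3-\alpha}^\eps\big)\to 0$ strongly in $L^2(0,L)$; combined with the strong convergence of $\mathcal R_{3-\alpha}^\eps$ this yields the claim, exactly as the paper does via \eqref{weak30} and \eqref{strong R12}.
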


 \bigskip

\noindent{\bf References.}

\end{document}